\definecolor{codepurple}{rgb}{0.1,0.1,0.5}
\definecolor{codegray}{rgb}{0.2,0.5,0.5}
\definecolor{codegreen}{rgb}{0.12,0.55,0.6}
\definecolor{backcolour}{rgb}{0.93,0.97,0.95}
\lstdefinestyle{mystyle}{
	backgroundcolor=\color{backcolour},   
	commentstyle=\color{codegreen},
	keywordstyle=\color{magenta},
	numberstyle=\tiny\color{codegray},
	stringstyle=\color{codepurple},
	basicstyle=\ttfamily\footnotesize,
	breakatwhitespace=false,         
	breaklines=true,                 
	captionpos=t,                    
	keepspaces=true,                 
	numbers=left,                    
	numbersep=8pt,                  
	showspaces=false,                
	showstringspaces=false,
	showtabs=false,                  
	tabsize=2
}
\newcommand{\nicecolor}{Navy}
\makeatletter\@addtoreset{equation}{section}
\DeclareMathOperator{\Cr}{Cr}
\DeclareMathOperator{\tr}{tr}
\DeclareMathOperator{\Bir}{Bir}
\DeclareMathOperator{\rk}{rk}
\DeclareMathOperator{\Pic}{Pic}
\DeclareMathOperator{\diag}{diag}
\DeclareMathOperator{\Aut}{Aut}
\DeclareMathOperator{\Ker}{Ker}
\DeclareMathOperator{\PGL}{PGL}
\DeclareMathOperator{\GL}{GL}
\DeclareMathOperator{\SL}{SL}
\DeclareMathOperator{\PSL}{PSL}
\DeclareMathOperator{\Proj}{Proj}
\DeclareMathOperator{\PO}{PO}
\DeclareMathOperator{\Center}{Z}
\DeclareMathOperator{\Centralizer}{C}
\DeclareMathOperator{\Cohom}{H}
\DeclareMathOperator{\Hom}{Hom}
\DeclareMathOperator{\Hol}{Hol}
\DeclareMathOperator{\Burn}{Burn}
\DeclareMathOperator{\Symb}{Symb}
\DeclareMathOperator{\Inn}{Inn}
\DeclareMathOperator{\Norm}{N}
\theoremstyle{plain}
\newtheorem{theorem}[equation]{Theorem}
\newtheorem{lemma}[equation]{Lemma}
\newtheorem{lemma-def}[equation]{Lemma-Definition}
\newtheorem{corollary}[equation]{Corollary}
\newtheorem{proposition}[equation]{Proposition}
\newtheorem*{maintheorem*}{Main Theorem}
\newtheorem*{corollary*}{Corollary}
\newtheorem*{theorem*}{Theorem}
\theoremstyle{definition}
\newtheorem{example}[equation]{Example}
\newtheorem*{example*}{Example}
\newtheorem{definition}[equation]{Definition}
\newtheorem{definition-example}[equation]{Definition-Example}
\newtheorem{definition-proposition}[equation]{Definition-Proposition}
\newtheorem{question}[equation]{Question}
\newtheorem*{question*}{Question}
\newtheorem*{questionprime*}{Question'}
\newtheorem*{problem*}{Problem}
\newtheorem*{conjecture*}{Conjecture}
\theoremstyle{remark}
\newtheorem{remark}[equation]{Remark}
\newtheorem*{remark*}{Remark}
\newtheorem*{convention*}{Convention}
\newtheorem*{conventions*}{Conventions}
\newtheorem*{observation*}{Observation}
\newtheorem{notation}[equation]{Notation}
\definecolor{white}{HTML}{FFFFFF}
\definecolor{light-gray}{HTML}{E5E4E2}
\definecolor{light-green}{HTML}{E3F4F2}
\newcommand\iso{\stackrel{\sim}{\to}}
\DeclareFontFamily{U}{mathb}{\hyphenchar\font45}
\DeclareFontShape{U}{mathb}{m}{n}{
	<5> <6> <7> <8> <9> <10> gen * mathb
	<10.95> mathb10 <12> <14.4> <17.28> <20.74> <24.88> mathb12
}{}
\DeclareSymbolFont{mathb}{U}{mathb}{m}{n}
\DeclareMathSymbol{\righttoleftarrow}{3}{mathb}{"FD}
\newcommand{\actsfromleft}{\mathrel{\reflectbox{$\righttoleftarrow$}}}
\newcommand{\actsfromright}{\righttoleftarrow}
\g@addto@macro{\endabstract}{\@setabstract}
\newcommand{\authorfootnotes}{\renewcommand\thefootnote{\@fnsymbol\c@footnote}}%
\title[Linearization problem in the plane Cremona group]{Linearization problem for finite subgroups of the plane Cremona group}
\author{Antoine Pinardin}
\address{
	School of Mathematics, The University of Edinburgh, Edinburgh EH9 3JZ, UK}
\email{antoine.pinardin@ed.ac.uk}
\author{Arman Sarikyan}
\address{
	London Institute for Mathematical Sciences, Royal Institution, 21 Albermarle St, London W1S 4BS, UK}
\email{ars@lims.ac.uk}
\author{Egor Yasinsky}
\address{
	L'Institut de Math\'{e}matiques de Bordeaux, Universit\'{e} de Bordeaux, 351 Cours de la Lib\'{e}ration,
	33405 Talence Cedex, France}
\email{egor.yasinsky@u-bordeaux.fr}
\subjclass[2010]{14E07, 14E05, 14E30, 14J45, 14M22}
\newcommand{\I}{\ensuremath{\mathrm{I}}}
\newcommand{\II}{\ensuremath{\mathrm{II}}}
\newcommand{\III}{\ensuremath{\mathrm{III}}}
\newcommand{\IV}{\ensuremath{\mathrm{IV}}}
\newcommand{\CC}{\mathbb C}
\newcommand{\QQ}{\mathbb Q}
\newcommand{\FF}{\mathbb F}
\newcommand{\id}{\mathrm{id}}
\newcommand{\kk}{\mathbf k}
\newcommand{\RR}{\mathbb R}
\renewcommand{\AA}{\mathbb A}
\newcommand{\PP}{\mathbb P}
\newcommand{\ZZ}{\mathbb Z}
\newcommand{\KK}{\mathbf K}
\newcommand{\pt}{\mathrm{pt}}
\newcommand{\Torus}{\mathbb T}
\newcommand{\Sym}{\mathrm S}
\newcommand{\Alt}{\mathrm A}
\newcommand{\Dih}{\mathrm D}
\newcommand{\Cyc}{\mathrm C}
\newcommand{\Klein}{\mathrm V}
\newcommand{\Quat}{\mathrm Q}
\newcommand{\Frobenius}{\mathrm F}
\newcommand{\A}{\mathrm{A}}
\newcommand{\B}{\mathrm{B}}
\newcommand{\C}{\mathrm{C}}
\newcommand{\D}{\mathrm{D}}
\newcommand{\E}{\mathrm{E}}
\newcommand{\F}{\mathrm{F}}
\newcommand{\R}{\mathrm{R}}
\newcommand{\Rot}{\mathrm{R}}
\newcommand{\M}{\mathrm{M}}
\newcommand{\N}{\mathrm{N}}
\newcommand{\T}{\mathrm{T}}
\renewcommand*{\P}{\mathrm{P}}
\renewcommand*{\I}{\mathrm{I}}
\renewcommand*{\swap}{\sigma}
\def\dashmapsto{\mapstochar\dashrightarrow}
\def\@tocline#1#2#3#4#5#6#7{\relax
	\ifnum #1>\c@tocdepth 
	\else
	\par \addpenalty\@secpenalty\addvspace{#2}%
	\begingroup \hyphenpenalty\@M
	\@ifempty{#4}{%
		\@tempdima\csname r@tocindent\number#1\endcsname\relax
	}{%
		\@tempdima#4\relax
	}%
	\parindent\z@ \leftskip#3\relax \advance\leftskip\@tempdima\relax
	\rightskip\@pnumwidth plus4em \parfillskip-\@pnumwidth
	#5\leavevmode\hskip-\@tempdima
	\ifcase #1
	\or\or \hskip 3em \or \hskip 4em \else \hskip 5em \fi%
	#6\nobreak\relax
	\hfill\hbox to\@pnumwidth{\@tocpagenum{#7}}\par
	\nobreak
	\endgroup
	\fi}
\begin{document}
	
\maketitle

\begin{abstract}
We give a complete solution of the linearization problem in the plane Cremona group over an algebraically closed field of characteristic zero. 
\end{abstract}

\tableofcontents

\section{The linearization problem: brief history of previous work}\label{sec:intro}

\subsection{Finite subgroups of the plane Cremona group} Given a birational self-map of the projective plane, for example
\begin{equation*}
\varphi\colon [x:y:z]\mapsto [x(z-y):z(x-y):xz],
\end{equation*}
it is not at all obvious whether this map is conjugate to a \emph{linear} one within the entire group of birational transformations of the plane, known as the \emph{Cremona group} of rank 2. The map~$\varphi$ is indeed non-regular, as it has the indeterminacy points $[1:0:0],[0:1:0],[0:0:1]$. As a matter of fact, $\varphi$ is conjugate over the field $\QQ(\sqrt{5})$ to a linear map \cite{BeauvilleBlanc}. Letting $\zeta=(1+\sqrt{5})/2$, the birational involution
\[
[x:y:z]\mapsto \left [(x-\zeta y)(y-z)(z-\zeta x):(\zeta^{-1}x-y)(\zeta^2y-z)(z-x):(x-y)(\zeta^2y-z)(z-\zeta x) \right ]
\]
conjugates $\varphi$ to 
\[
[x:y:z]\mapsto [y-\zeta^{-2}z:y-\zeta^{-1}x:y].
\]
Naturally, the problem becomes even more complicated when considering arbitrary finite, not necessarily cyclic, groups and/or specific base fields. Regarding the latter, in this paper we will always work over an algebraically closed field $\kk$ of characteristic zero, say $\kk=\CC$. In this case, the very description of all finite subgroups of the plane Cremona group, denoted $\Cr_2(\kk)$ hereafter, becomes a challenging task. This problem traces its origins to E. Bertini's work on involutions in $\Cr_2(\CC)$. Bertini identified three types of conjugacy classes, now referred to as de Jonqui\`{e}res, Geiser, and Bertini involutions. However, his classification was incomplete, and his proofs lacked rigor. Progress continued in 1895 with S. Kantor and A. Wiman, who provided a more detailed description of finite subgroups in $\Cr_2(\CC)$, though their work was not entirely accurate too. 

The modern approach to this problem began with a crucial insight of Yu.~Manin and V.~Iskovskikh, who discovered a connection between the conjugacy classes of finite subgroups in the Cremona groups and classification of $G$-varieties up to $G$-birational equivalence between them (see Section \ref{sec: G-surfaces}). In 2000, L. Bayle and A. Beauville further developed this approach in their study of involutions \cite{BayleBeauville}. Later, T. de Fernex extended the classification to subgroups of prime order \cite{deFernex}, while J. Blanc classified finite abelian subgroups in $\Cr_2(\CC)$, see \cite{BlancDissertation}. The most comprehensive description of arbitrary finite subgroups in $\Cr_2(\CC)$ was obtained by I. Dolgachev and V. Iskovskikh in their seminal work \cite{DolgachevIskovskikh}.

What Dolgachev and Iskovskikh did was not only the classification of finite subgroups of $\Cr_2(\CC)$ up to isomorphism, but also the description of many conjugacy classes, which is clearly a much more delicate problem. For instance, it was established by Beauville and Bayle \cite{BayleBeauville} that the conjugacy classes of involutions in $\Cr_2(\CC)$ are in one-to-one correspondence with the isomorphism classes of their fixed curves. In particular, an involution is \emph{linearizable}, i.e. conjugate in $\Cr_2(\CC)$ to a linear one, if and only if its fixed curve is rational (notice that in this case it is crucial that the base field is algebraically closed: as shown in a recent work \cite{CheltsovMangolteYasinskyZimmermann}, the aforementioned correspondence fails over a non-closed field, such as $\RR$).

\medskip

Nevertheless, despite numerous specific results (see, for example \cite{DolgachevIskovskikh,BlancLinearisation}), the following basic question remained open: \emph{which finite subgroups of $\Cr_2(\CC)$ are linearizable?} The goal of this paper is to provide a complete answer to this question. Before presenting our results, let us place the problem in a broader context and describe some existing approaches to solving it.

\subsection{The Bogomolov--Prokhorov invariant}

Consider the tower of the Cremona groups
\[
\Cr_1(\kk)\subset\Cr_2(\kk)\subset\Cr_3(\kk)\subset\Cr_4(\kk)\subset\ldots,
\]
where $\Cr_i(\kk)$ denotes the group of birational self-maps of $\PP_\kk^{i}$, and each embedding $\Cr_i(\kk)\subset\Cr_{i+1}(\kk)$ is induced by adding a new variable. Two subgroups $G_1\subset\Cr_n(\kk)$ and $G_2\subset\Cr_m(\kk)$ are called \emph{stably conjugate} if they are conjugate in some larger Cremona group $\Cr_N(\kk)\supset\Cr_n(\kk),\Cr_m(\kk)$, where $N\geqslant n,m$. 

As will be recalled in Section \ref{subsec: G-MMP}, any finite subgroup $G\subset\Cr_n(\kk)$ is induced by a \emph{biregular} action on some smooth projective rational variety $X$. The process of replacing the initial rational action of $G$ by a regular one on $X$ is called a \emph{regularization} of $G$, and then one speaks about \emph{$G$-variety}, i.e. the pair $(X,G)$, where $G\subset\Aut(X)$. We say that two $G$-varieties $(X,G)$ and $(Y,G)$ are \emph{stably birational} if there exists a $G$-birational map
\[
X\times\PP^n\dashrightarrow Y\times\PP^m,
\]
where actions on $\PP^n$ and $\PP^m$ are trivial (so, the case $G=\id$ corresponds to classical notion of stable birationality). If $(X,G)$ is stably birational to $(\PP^N,G)$, then we say that $G$ is \emph{stably linearizable}. The following question can be viewed as an analogue of the famous Zariski cancellation problem in our setting:

\begin{question}\label{question: Zariski}
	Let $G\subset\Cr_2(\kk)$ be a stably linearizable finite subgroup. Is $G$ linearizable?
\end{question}

As was observed by F. Bogomolov and Yu. Prokhorov \cite{BogomolovProkhorov}, for a smooth projective $G$-variety $X$, the cohomology group $\Cohom^1(G,\Pic(X))$ is a $G$-birational invariant (in the context of rationality
questions over non-closed fields this observation goes back to Yu. Manin). Moreover, one has the following.
	
\begin{theorem*}[{\cite[Corollary 2.5.1]{BogomolovProkhorov}}]
	If $(X,G)$ and $(Y,G)$ are projective, smooth, and stably birational, then
	\[
	\Cohom^1(G,\Pic(X))\simeq\Cohom^1(G,\Pic(Y)).
	\]
\end{theorem*}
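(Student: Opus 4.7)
The plan is to combine two independent observations: (a) $H^1(G,\Pic(-))$ is unchanged when passing from $X$ to a $G$-equivariant blow-up of $X$ along a smooth $G$-invariant center, and (b) it is unchanged when passing from $X$ to $X\times\PP^n$ with trivial action on the second factor. Together with $G$-equivariant weak factorization these two observations imply the theorem.

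For (a), recall that if $\pi\colon\widetilde{X}\to X$ is the blow-up of a smooth irreducible subvariety $Z\subset X$ (not necessarily $G$-invariant itself), then as abelian groups $\Pic(\widetilde{X})=\pi^*\Pic(X)\oplus\ZZ\cdot E$ where $E$ is the exceptional divisor. When $Z$ is a disjoint union $Z=\bigsqcup Z_i$ of smooth subvarieties permuted by $G$, one obtains a splitting of $G$-modules
\[
\Pic(\widetilde{X})\;\simeq\;\Pic(X)\oplus P_Z,
\]
where $P_Z=\bigoplus_i\ZZ\cdot E_i$ is the free abelian group on the exceptional divisors, and the $G$-action on $P_Z$ is the permutation action coming from how $G$ permutes the $Z_i$. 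Hence $P_Z$ is a permutation $G$-module, so by Shapiro's lemma a typical summand $\ZZ[G/H]$ satisfies $\Cohom^1(G,\ZZ[G/H])=\Cohom^1(H,\ZZ)=\Hom(H,\ZZ)=0$ for any finite subgroup $H$. Taking cohomology of the split short exact sequence then gives $\Cohom^1(G,\Pic(\widetilde{X}))\simeq\Cohom^1(G,\Pic(X))$. The general case of a $G$-invariant smooth (possibly reducible) center that $G$ may permute its components is handled identically; if $G$ does not act freely on the components one simply groups them into $G$-orbits.

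For (b), the projection $X\times\PP^n\to X$ together with the pullback of $\mathcal O_{\PP^n}(1)$ from the second factor induces a $G$-equivariant isomorphism $\Pic(X\times\PP^n)\simeq\Pic(X)\oplus\ZZ$, with $G$ acting trivially on the $\ZZ$ summand. Since $G$ is finite, $\Cohom^1(G,\ZZ)=\Hom(G,\ZZ)=0$, so the stabilization step does not change $\Cohom^1(G,\Pic(-))$.

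To combine (a) and (b), we invoke the $G$-equivariant weak factorization theorem (Abramovich--Karu--Matsuki--W\l{}odarczyk, equivariant version): any $G$-birational map between smooth projective $G$-varieties factors as a finite sequence of $G$-equivariant blow-ups and blow-downs with smooth $G$-invariant centers. Applying this to a stable birational equivalence $X\times\PP^n\dashrightarrow Y\times\PP^m$ and tracking $\Cohom^1(G,\Pic(-))$ along each step via (a), and comparing the endpoints to $\Cohom^1(G,\Pic(X))$ and $\Cohom^1(G,\Pic(Y))$ via (b), yields the desired isomorphism. The main subtlety to verify in (a) is that even though one blows up centers that may be reducible and permuted non-trivially by $G$, the exceptional part of the Picard group remains a permutation module, hence cohomologically trivial; this is where one must be careful that $G$-equivariant weak factorization preserves smoothness of the centers and compatibility with the group action.
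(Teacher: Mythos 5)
The paper does not prove this statement; it is imported verbatim from Bogomolov--Prokhorov \cite[Corollary 2.5.1]{BogomolovProkhorov}, so there is no internal proof to compare against. Your argument is correct: the blow-up step contributes a direct summand that is a permutation $G$-module (free on the exceptional divisors over the $G$-orbits of components of the center), and $\Cohom^1$ of a permutation module vanishes by Shapiro's lemma since the stabilizers are finite; the stabilization step contributes a trivial summand $\ZZ$ with $\Cohom^1(G,\ZZ)=\Hom(G,\ZZ)=0$; and equivariant weak factorization glues these local computations into the global statement. The one place where you reach for a heavier tool than necessary is the invocation of $G$-equivariant weak factorization. The original argument in \cite{BogomolovProkhorov} instead proves the stronger-looking but elementary lemma that for \emph{any} proper $G$-birational morphism $f\colon W\to X$ of smooth projective $G$-varieties (not just a smooth blow-up), $\Pic(W)\simeq f^*\Pic(X)\oplus M$ with $M$ the permutation module freely generated by the $f$-exceptional prime divisors (this uses only smoothness of $X$ in characteristic zero), and then takes a common equivariant resolution of the graph of the birational map $X\times\PP^n\dashrightarrow Y\times\PP^m$ via equivariant Hironaka. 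That route avoids the delicate bookkeeping of whether the intermediate models in a weak factorization are projective and whether the centers remain smooth and $G$-invariant at every stage --- precisely the ``main subtlety'' you flag at the end --- whereas your route has the advantage of only ever computing $\Pic$ of an explicit smooth blow-up. Either way the proof is complete.
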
	

\begin{corollary*}[{\cite[Corollary 2.5.2]{BogomolovProkhorov}}]
	If $(X,G)$ is stably linearizable, then $\Cohom^1(H,\Pic(X))=0$ for any subgroup $H\subset G$.	
\end{corollary*}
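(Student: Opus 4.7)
The plan is to deduce the corollary directly from the preceding theorem, applied not to $G$ itself but to each subgroup $H \subset G$ separately. First, I would unpack the hypothesis: saying that $(X,G)$ is stably linearizable means that there exists a $G$-birational map
\[
X\times\PP^n\dashrightarrow \PP^N\times\PP^m,
\]
where $G$ acts linearly on $\PP^N$ and trivially on the two projective factors $\PP^n$ and $\PP^m$. Restricting the action to any subgroup $H\subset G$, the very same map becomes an $H$-birational equivalence, so $(X,H)$ and $(\PP^N,H)$ are stably birational as $H$-varieties. Both are smooth and projective, so the hypotheses of the Bogomolov--Prokhorov theorem are satisfied with $H$ in place of $G$.

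Applying that theorem yields
\[
\Cohom^1(H,\Pic(X))\simeq \Cohom^1(H,\Pic(\PP^N)).
\]
The right-hand side is now easy to evaluate: $\Pic(\PP^N)\simeq \ZZ$ is generated by the hyperplane class $\mathcal{O}(1)$, and since $H$ acts linearly on $\PP^N$, the induced action on $\Pic(\PP^N)$ preserves this class and hence is trivial. Thus the $H$-module $\Pic(\PP^N)$ is just $\ZZ$ with trivial $H$-action.

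Finally, for a finite group $H$ acting trivially on $\ZZ$ one has
\[
\Cohom^1(H,\ZZ)=\Hom(H,\ZZ)=0,
\]
because any homomorphism from a finite group to a torsion-free abelian group is zero. Chaining these identifications together gives $\Cohom^1(H,\Pic(X))=0$, as claimed. I do not expect any substantive obstacle here: the only point requiring a moment of care is that stable birationality of $G$-varieties is functorial under restriction to a subgroup, which is immediate from the definition since a trivial $G$-action on a projective factor restricts to a trivial $H$-action. The entire argument is thus a formal consequence of the theorem combined with the computation of $\Cohom^1$ of the trivial module $\ZZ$.
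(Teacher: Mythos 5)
Your argument is correct and is precisely the standard deduction: the paper itself states this corollary without proof (citing Bogomolov--Prokhorov), and the intended argument is exactly what you wrote --- restrict the stable birational equivalence to $H$, apply the preceding theorem with $H$ in place of $G$, and note that $\Cohom^1(H,\Pic(\PP^N))=\Hom(H,\ZZ)=0$ since $H$ is finite and acts trivially on the ample generator. No gaps.
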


If the latter condition holds, the $G$-variety $X$ is said to be \emph{$\Cohom^1$-trivial}. It turns out that in some cases, the invariant $\Cohom^1(G,\Pic(X))$ can be computed in
terms of $G$-fixed locus:

\begin{theorem}[{\cite[Theorem 1.1, Corollary 1.2]{BogomolovProkhorov}}] Let a finite cyclic group $G$ of prime order $p$ act on a non-singular projective rational surface $X$. Assume that $G$ fixes (point-wise) a curve of genus $g > 0$. Then
	\[
	\Cohom^1(G,\Pic(X))\simeq(\ZZ/p\ZZ)^{2g}.
	\]
	Moreover, the following are equivalent:
	\begin{enumerate}
		\item $G$ is $\Cohom^1$-trivial;
		\item $(X,G)$ is linearizable;
		\item $(X,G)$ is stably linearizable.
	\end{enumerate}
\end{theorem}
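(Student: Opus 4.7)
The proof splits naturally into the $\Cohom^1$ computation, which carries all the content, and a short deduction of the equivalences. For the latter: (2) $\Rightarrow$ (3) is immediate, and (3) $\Rightarrow$ (1) is the Bogomolov--Prokhorov corollary stated above, applied to $H = G$. The formula $\Cohom^1(G, \Pic(X)) \cong (\ZZ/p\ZZ)^{2g}$ shows that (1) fails whenever $g > 0$; together with the two implications just noted, this forces (2) and (3) to fail as well, so the three conditions are (vacuously) equivalent.

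For the cohomology computation, write $\sigma$ for a generator of $G$ and consider the quotient $\pi \colon X \to Y := X/G$, which is \'{e}tale off the isolated fixed points and the curve $C$, and ramified with index $p$ along $C$ (so $\pi^*\pi(C) = pC$). Since $\Pic(X)$ is a finitely generated free abelian group,
\[
\Cohom^1(G, \Pic(X)) = \ker(N)/(\sigma-1)\Pic(X), \qquad N := 1 + \sigma + \ldots + \sigma^{p-1},
\]
is a finite elementary abelian $p$-group. The plan is to identify it with $\mathrm{Jac}(C)[p] \cong (\ZZ/p\ZZ)^{2g}$ via restriction to $C$. Concretely: if $[D] \in \ker(N)$ is represented by a divisor with support disjoint from $C$, then using $\sigma|_C = \mathrm{id}$ one computes $(ND) \cdot C = p \, (D \cdot C)$, forcing $D \cdot C = 0$, and $p \cdot (D|_C) = (ND)|_C \sim 0$ in $\Pic(C)$, so $[D|_C] \in \mathrm{Jac}(C)[p]$. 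Moreover, for any $E$ not supported on $C$ one has $(\sigma E)|_C = \sigma^*(E|_C) = E|_C$ in $\Pic(C)$ (since $\sigma$ restricts to the identity on $C$), so $((\sigma - 1)E)|_C \sim 0$. Hence restriction induces a well-defined homomorphism
\[
r \colon \Cohom^1(G, \Pic(X)) \longrightarrow \mathrm{Jac}(C)[p].
\]

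The crux is then to show that $r$ is an isomorphism. Surjectivity should come from the theory of cyclic $p$-covers: a $p$-torsion line bundle on $C$ classifies precisely an unramified cyclic $p$-cover of $C$, and a $G$-invariant divisor on $X$ realizing a given such class can be produced by exploiting the local structure of $\pi$ in a tubular neighborhood of $C$. Injectivity amounts to showing that if $D \in \ker(N)$ restricts to a principal divisor on $C$ then $D$ is itself cohomologous to zero modulo $(\sigma-1)\Pic(X)$. I expect the main technical obstacle to lie at the isolated fixed points: to move cleanly between $\Pic(X)$ and $\Pic$ of the (singular) quotient $Y$, one typically blows them up and compares with the minimal resolution of $Y$; one then has to verify that the exceptional divisors so introduced contribute trivially to $\Cohom^1$, so that the answer is governed by $C$ alone. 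This local cleanup, while standard, is where the bulk of the case analysis (indexed by the local $G$-weights at each isolated fixed point) will go.
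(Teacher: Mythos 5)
This theorem is not proved in the paper at all: it is imported verbatim from \cite{BogomolovProkhorov} (Theorem 1.1 and Corollary 1.2 there), so there is no in-paper argument to compare yours against. Judged on its own terms, your proposal gets the easy half right but leaves the substantive half unproved. The reduction of the three equivalences is fine: under the standing hypothesis $g>0$ the formula makes (1) false, and the chain $(2)\Rightarrow(3)\Rightarrow(1)$ (the second implication being Corollary 2.5.2 of \cite{BogomolovProkhorov} with $H=G$) then forces (2) and (3) to be false as well, so the three statements are all false and hence trivially equivalent. Likewise, your verification that restriction induces a well-defined homomorphism $r\colon \Cohom^1(G,\Pic(X))\to \mathrm{Jac}(C)[p]$ is correct: $(\sigma^*L)|_C=(\sigma|_C)^*(L|_{\sigma(C)})=L|_C$ because $\sigma$ fixes $C$ pointwise, so $(\sigma-1)\Pic(X)$ dies on $C$, and $ND\sim 0$ gives both $D\cdot C=0$ and $p(D|_C)\sim 0$.

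The gap is that the entire content of the statement $\Cohom^1(G,\Pic(X))\simeq(\ZZ/p\ZZ)^{2g}$ is the bijectivity of $r$ (or of whatever map one uses), and your proposal replaces both directions with ``should come from'' and ``amounts to showing''. Neither is routine. Surjectivity asks that every $p$-torsion class on $C$ be cut out by a divisor class on $X$ lying in $\ker N$; since $\Pic(X)\to\Pic(C)$ is very far from surjective (discrete source, positive-dimensional target), this is a genuinely special phenomenon — in the model case of the Geiser involution on a degree-two del Pezzo surface it is the classical fact that differences of bitangent classes generate $\mathrm{Jac}(C)[2]$, which already requires an argument. Injectivity (that $D\in\ker N$ with $D|_C\sim 0$ lies in $(\sigma-1)\Pic(X)$) is equally unaddressed, and your remark about blowing up the isolated fixed points correctly locates where trouble hides but does not resolve it. You also implicitly assume $C$ is the unique positive-genus component of $\Fix(G)$, which is true for prime-order automorphisms of rational surfaces but should be said. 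For the record, the cited proof does not go through $\mathrm{Jac}(C)[p]$ directly: Bogomolov and Prokhorov work with the quotient surface $X/G$ and the structure of $\Pic(X)$ as a $\ZZ[\ZZ/p\ZZ]$-lattice, counting the cohomologically nontrivial summands by Lefschetz-type fixed-point computations in which the fixed curve contributes its topology; so even granting your setup, the hard steps you defer would still have to be supplied by an argument of comparable length.
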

In particular, de Jonqui\`{e}res, Bertini and Geiser involutions are not stably linearizable: they all fix a curve of positive genera.

In \cite{ProkhorovStableConjugacy}, Prokhorov further applies the invariant introduced by Bogomolov and himself and indicates where to look for negative answers to Question \ref{question: Zariski}. He shows, for example, that a del Pezzo surface $S$ acted on by a finite group $G$ with $\Pic(S)^G\simeq\ZZ$, is $\Cohom^1$-trivial if and only if (i) either $K_S^2\geqslant 5$, (ii) or $S$ is the quartic 
\[
x_1^2+\omega_3 x_2^2+\omega_3^2 x_3^2+x_4^2=x_1^2+\omega_3^2x_2^2+\omega_3 x_3^2+x_5^2=0
\]
in $\PP^4$, acted on by the group $G\simeq\Cyc_3\rtimes\Cyc_4$ generated by two automorphisms
\[
(x_1,x_2,x_3,x_4,x_5)\mapsto (x_2,x_3,x_1,\omega_3 x_4,\omega_3^2x_5),\ \ (x_1,x_2,x_3,x_4,x_5)\mapsto (x_1,x_3,x_2,-x_5,x_4),
\]
where $\omega_3=\exp(2\pi i/3)$. Our Main Theorem below shows that only a few groups in these two cases (i) and (ii) are linearizable. Constructions of \emph{stable} linearizability in the remaining non-linearizable cases are sporadic and remain an open problem even in dimension 2. For instance, as of the time of writing, it is unknown whether the group $G\simeq\Cyc_3\rtimes\Cyc_4$ mentioned above is stably linearizable. Some of these constructions are mentioned below, see Remarks~\ref{rem: dP5 stable linearizability}, \ref{rem: Cayley groups}, and~\ref{rem: dP6 S4 stably linearizable action}.

\subsection{The Burnside formalism}\label{subsec: Burnside}

Another very recent technique for distinguishing birational actions of finite groups is the \emph{Burnside group formalism}, introduced in the work \cite{KreschTschinkelBurnsideVolume} of A.~Kresch and Yu.~Tschinkel and generalizing the \emph{birational symbols groups} of M. Kontsevich, V. Pestun and Yu. Tschinkel \cite{KontsevichPestumTschinkel}. Here, we will limit ourselves to a rough sketch of this approach.

In \cite{KreschTschinkelBurnsideVolume}, the \emph{symbols group} $\Symb_n(G)$ was defined as the free abelian group generated by \emph{symbols} $(H,R\actsfromleft \KK,\beta)$. Here, $H\subset G$ is an abelian subgroup, $R\subset\Centralizer_G(H)/H$ is a subgroup, where $\Centralizer_G(H)$ denotes the centralizer of~$H$, $\KK/\kk$ is a finitely generated extension of transcendence degree $d\leqslant n$ faithfully acted on by $R$, and $\beta=(b_1,\ldots,b_{n-d})$ is a sequence of nontrivial characters of $H$, generating $\Hom(H,\kk^*)$. The quotient of $\Symb_n(G)$ by some involved \emph{conjugation} and \emph{blow-up relations} \cite[Section 4]{KreschTschinkelBurnsideVolume} gives the \emph{equivariant Burnside group} $\Burn_n(G)$. 

Now, let $(X,G)$ be a smooth $n$-dimensional projective variety with a generically free action of~$G$. By \cite[Theorem 3.2]{ReichsteinYoussin} or \cite[Section 7.2]{HassettKreschTschinkel}, the action of $G$ can be always brought to a \emph{standard form} via equivariant blow-ups. This means that there is a $G$-invariant simple normal crossing divisor $\Delta\subset X$ such that $G$ acts freely on $X\setminus \Delta$, and for every $g\in G$ and every irreducible component $D$ of $\Delta$ one has either $g(D)=D$ or $g(D)\cap D=\varnothing$. Let $\{D_j\}_{j\in\mathcal{J}}$ be the set of irreducible components with non-trivial (and hence cyclic) stabilizers $H_j\subset G$, considered up to conjugation in $G$. For each $j\in\mathcal{J}$, the elements of $G$ which do not move $D_j$ to an other component of $\Delta$, give rise to a subgroup $R_j\subset\Centralizer_G(H_j)/H_j$. Consider the subset $\mathcal{I}\subset\mathcal{J}$ corresponding to those divisors, together with the respective $R_j$-action, that cannot be obtained via equivariant blow-ups of any standard model of any $G$-variety (such divisors were called \emph{incompressible}). Finally, let $b_j$ be the character of $H_j$ in the normal bundle to $D_j$. The formal sum
\begin{equation}\label{eq: Burnside}
[X\actsfromright G]=\sum_{i\in\mathcal{I}}\left (H_i,R_i\actsfromleft\kk(D_i),b_i\right ),
\end{equation}
viewed as an element of $\Burn_n(G)$, turns out to be a well-defined $G$-birational invariant \cite[Theorem 5.1]{KreschTschinkelBurnsideVolume}. 

There exist different versions of symbols groups and corresponding Burnside-type obstructions, see e.g. \cite[Section 3]{TschinkelYangZhangLinear} for an overview. The first applications of this formalism yield non-linearizable cyclic actions on certain cubic fourfolds \cite[6]{HassettKreschTschinkel}; a new proof \cite[7.6]{HassettKreschTschinkel} of non-linearizability of minimal $\Dih_6$-action on the sextic del Pezzo surface (this is an Iskovskikh's example \ref{ex: dP6 Iskovskikh's example}); non-conjugacy\footnote{Note that this cannot be approached via the Bogomolov--Prokhorov invariant, which vanishes for linear actions.} of certain intransitive and imprimitive subgroups of $\PGL_3(\CC)$ and $\PGL_4(\CC)$ in $\Cr_2(\CC)$ and $\Cr_3(\CC)$, respectively (by showing that the corresponding actions give different classes in $\Burn_2$ and $\Burn_3$), see \cite[Sections 10-11]{KreschTschinkelRepresentationTheory} and \cite[Sections 7-8]{TschinkelYangZhangLinear}. Furthermore, there are examples of non-linearizable actions on some 3-dimensional quadrics \cite[Section 9]{TschinkelYangZhangLinear}, and classification (obtained through combinations of various techniques) of non-linearizable actions on some prominent threefolds, such as the Segre cubic, the Burkhardt quartic, and some singular cubic threefolds \cite{CheltsovTschinkelZhang,CheltsovTschinkelZhangSingularCubics1,CheltsovTschinkelZhangSingularCubics2}.

\vspace{0.3cm}

Some other equivariant birational invariants were recently proposed by L. Esser (the dual complex \cite{EsserDualComplex}), T. Ciurca, S. Tanimoto and Yu. Tschinkel (an equivariant version of the formalism of intermediate Jacobian torsor obstructions \cite{CiurcaTanimotoTschinkel,CheltsovTschinkelZhangSingularCubics1}), and by J.~Blanc, I.~ Cheltsov, A.~Duncan and Yu.~Prokhorov (the Amitsur subgroup \cite{BlancCheltsovDuncanProkhorov}). We refer the reader to these works for details.

\subsection{Classification of linearizable actions} In this paper, we use an extremely powerful method from birational geometry known as the \emph{Sarkisov program}, which, in principle, has no limitations and allows to answer the question of the birationality of two Mori fibre spaces, and, in particular, their (equivariant) linearizability. The advantage of this method is that, in dimension 2, it is highly explicit (algorithmic); its application in higher dimensions is much more technically involved. In any case, it has enabled us to prove that the ``majority'' of subgroups of the plane Cremona group are non-linearizable, while the linearizable ones belong to the following compact list. We use the standard language of $G$-varieties, which is reminded in Section \ref{sec: G-surfaces}, to formulate our result.

\begin{maintheorem*}
	Let $\kk$ be an algebraically closed field of characteristic zero, and $G\subset\Cr_2(\kk)$ be a finite subgroup. Consider a regularization of $G$ on a two-dimensional $G$-Mori fibre space~$S$ over the base $B$. Then $G$ is \emph{linearizable} if and only if the pair $(S,G)$ is one of the following\footnote{We refer to Notations \ref{notation: group theory} for the group-theoretic notations.}:

\begingroup
\renewcommand*{\arraystretch}{1.4}
\begin{longtable}{|c|l|l|l|}
	\hline
	$K_S^2$ & Surface $S$ & Group $G\subset\Aut(S)$ & Reference  \\ \hline
	
	\rowcolor{light-green}
	\multicolumn{4}{|c|}{$G$-conic bundles $($over $B\simeq\PP^1$$)$} \\ \hline
	
	$K_S^2=8$ & A Hirzebruch surface $\FF_n$ with $n$ odd & --- any & Theorem \ref{thm: hirzebruch answer} \\ \hline
	$K_S^2=8$ & A Hirzebruch surface $\FF_n$ with $n>0$ even & \makecell[l]{--- acts cyclically on $B$\\--- acts as $\Dih_{2m+1}$ on $B$} & Theorem \ref{thm: hirzebruch answer} \\ \hline
	
	$K_S^2=8$ & The quadric surface $\FF_0\simeq\PP^1\times\PP^1$ & \makecell[l]{--- $\Cyc_n\times_Q\Cyc_m$\\ --- $\Cyc_n\times_Q\Dih_{2m+1}$\\ --- $\Dih_{2n+1}\times_Q\Dih_{2m+1}$\\
	is dihedral} &  Theorem \ref{thm: P1xP1 Pic=2 linearization} \\ \hline
	
	\rowcolor{light-green}
	\multicolumn{4}{|c|}{$G$-del Pezzo surfaces $($over $B=\pt$$)$} \\ \hline
		
	$K_S^2=5$  & The unique quintic del Pezzo surface & \makecell[l]{--- $\Cyc_5$\\--- $\Dih_5$} & Proposition \ref{dP5: D5 is linearizable} \\ \hline
	
	$K_S^2=6$  & The unique sextic del Pezzo surface & \makecell[l]{--- $\Cyc_6$\\ --- $\Sym_3$} & Proposition \ref{prop: dP6 criterion} \\ \hline
	
	$K_S^2=8$  & The quadric surface $\PP^1\times\PP^1$ & \makecell[l]{--- $(\Cyc_n\times_Q\Cyc_n)_\bullet\Cyc_2$\\ } &  Proposition \ref{prop: dP8 Pic=1 cyclic} \\ \hline
	
	$K_S^2=9$  & The projective plane $\PP^2$ & --- Blichfeldt's list &  Section \ref{subsec: PGL3} \\ \hline
\end{longtable}
\endgroup	
\end{maintheorem*}

This paper is organized as follows. In Section \ref{sec: G-surfaces}, we recall some key facts about rational $G$-surfaces and their minimal models, following the classical works of V. Iskovskikh. We demonstrate how the initial problem reduces to the linearization of finite groups acting on some very specific models. The main technical tool of the paper --- the Sarkisov program --- is also introduced. This paper employs a significant amount of (elementary) finite group theory, so relevant facts are isolated in Section \ref{sec: group theory}. In Section \ref{sec: del Pezzo}, we derive part of the main theorem related to $G$-del Pezzo surfaces. Although Section \ref{sec: quadrics} is not used later on, it aims to fill a gap in the literature by explicitly describing, in matrix terms, finite groups acting on smooth two-dimensional quadrics (equivalently, finite subgroups of the projective orthogonal group $\PO(4)$). In Section \ref{sec: Hirzebruch}, we examine the linearizability of finite groups acting on Hirzebruch surfaces, which finishes the proof of our main result. Finally, in the Appendix we provide the supporting Magma code for Section \ref{sec: quadrics} (note that this code is for the reader's convenience only and essentially is not used in any proof).

\vspace{0.3cm}

\textbf{Acknowledgement.}
The authors thank Ivan Cheltsov for helpful discussions. The third author is grateful to the CNRS for its support through the grant PEPS JC/JC 2024.
	
\section{Rational $G$-surfaces}\label{sec: G-surfaces}

Throughout the paper, we work over an algebraically closed field $\kk$ of characteristic zero. We first recall the classical approach to classification of finite subgroups in $\Cr_2(\kk)$; we refer to \cite[Section 3]{DolgachevIskovskikh} for more details and proofs.

\subsection{$G$-Minimal model program}\label{subsec: G-MMP} 

Let $G$ be a finite group. A \emph{$G$-surface} is a triple $(S,G,\iota)$, where $S$ is a smooth projective surface over $\kk$ and $\iota\colon G\hookrightarrow \Aut(S)$ is a monomorphism. A $G$-{\it morphism} of $G$-surfaces $(S_1,G,\iota_1)\to (S_2,G,\iota_2)$ is a morphism $f\colon S_1\to S_2$ such that $f\circ\iota_1(G)=\iota_2(G)\circ f$. Similarly, one defines $G$-rational maps and $G$-birational maps.

Let $X$ be an algebraic variety over $\kk$ and $G\subset \Bir(X)$ be a finite group acting birationally and faithfully on $X$. Then one can resolve indeterminacies of the action of $G$ by the following theorem.

\begin{theorem}[{e.g. \cite[Theorem 1.4]{deFernexEin}}]\label{theorem: a_indeterminacies}
	Let $X$ be an algebraic variety and $G\subset \Bir(X)$ be a finite group acting birationally and faithfully on $X$. Then there exists a $G$-equivariant birational map $\phi \colon Y \dashrightarrow X$, where $Y$ is a smooth algebraic variety acted on biregularly by $G$.
\end{theorem}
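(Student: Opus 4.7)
The plan is to realize the $G$-action on $X$ as a biregular action on some (possibly singular) model, and then to invoke equivariant resolution of singularities in characteristic zero.

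First, I would replace $X$ by a quasi-projective birational model, so we may assume $X$ is quasi-projective. Let $U\subset X$ be the open subvariety on which every $g\in G$ is defined as a morphism (a finite intersection of open sets, hence open). Consider the rational map
\[
\Phi\colon X\dashrightarrow X^{|G|},\qquad x\longmapsto \bigl(g(x)\bigr)_{g\in G},
\]
regular on $U$. Let $Y_0\subset X^{|G|}$ be the closure of $\Phi(U)$. The group $G$ acts on $X^{|G|}$ by the coordinate permutation $h\cdot(a_{g})_{g\in G}=(a_{gh})_{g\in G}$, and this action preserves $Y_0$: indeed for $x\in U$ and $h\in G$ one computes $\Phi(h(x))=\bigl((gh)(x)\bigr)_{g\in G}=h\cdot\Phi(x)$. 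Projection to the coordinate indexed by the identity then gives a $G$-equivariant birational morphism $Y_0\to X$.

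It remains to resolve the singularities of $Y_0$ equivariantly. Since we are in characteristic zero, by the $G$-equivariant version of Hironaka's theorem (see e.g. Bierstone--Milman, Villamayor, or Koll\'ar), there exists a smooth quasi-projective variety $Y$ together with a $G$-equivariant proper birational morphism $Y\to Y_0$. Composing with $Y_0\to X$ produces the desired $G$-equivariant birational map $\phi\colon Y\dashrightarrow X$.

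The only genuinely nontrivial input is the existence of equivariant resolution of singularities, a deep but now standard result in characteristic zero; the rest is a short geometric construction. The main point to verify is that the coordinate permutation action on $X^{|G|}$ is compatible with the birational $G$-action on $X$ via $\Phi$, which follows from the elementary identity $g\circ h=gh$ in $G$. A variant of the same argument, perhaps conceptually cleaner but requiring a bit more algebraic background, would take $K=\kk(X)^{G}$, choose any quasi-projective model $Z$ of $K$, and let $Y_0$ be the normalization of $Z$ in $\kk(X)$: the biregularity of the $G$-action on $Y_0$ then follows from the functoriality of normalization, after which one again applies equivariant resolution.
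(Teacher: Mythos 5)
Your proof is correct. Note that the paper does not actually prove this statement: it is quoted as a known result with a reference to de Fernex--Ein, so there is no in-text argument to compare against. Your two-step argument --- first producing a (possibly singular) birational model $Y_0$ on which $G$ acts biregularly, either as the closure of the graph $x\mapsto (g(x))_{g\in G}$ in $X^{|G|}$ with the coordinate-permutation action, or as the normalization of a model of $\kk(X)^G$ in $\kk(X)$, and then applying equivariant (functorial) resolution of singularities in characteristic zero --- is exactly the standard proof of this regularization theorem and is essentially what the cited sources do. The only points worth stating explicitly are that the identity $g\circ h=gh$ of rational maps upgrades to an equality of morphisms on a dense open set because $X$ is separated and reduced (which justifies the $G$-invariance of $Y_0$), and that functorial resolution is automatically equivariant because it commutes with automorphisms; both are routine.
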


In this paper we are interested in the case $X=\PP^2$, so the $G$-surface $(Y,G,\iota)$ is rational. Hence, after running the $G$-Minimal model program we find that $(Y,G,\iota)$ is $G$-birational to one of the following \emph{$G$-Mori fibre spaces}:
\begin{enumerate}
	\item[(i)] a \emph{$G$-del Pezzo surface}, i.e. a  $G$-surface $(S,G,\iota)$, such that $S$ is a del Pezzo surface and $\Pic(S)^G\simeq \ZZ$;
	\item[(ii)] a \emph{$G$-conic bundle}, i.e. a conic bundle $\pi\colon S\to \PP^1$ such that, both $S$ and $\PP^1$ are acted on by $G$, $-K_S$ is $\pi$-ample and $\Pic(S)^G\simeq\ZZ^2$. 
\end{enumerate}

Conversely, given a rational $G$-surface $S$, any birational map $\psi\colon S\dasharrow\PP^2$ yields an injective homomorphism 
\[
i_\psi\colon G\hookrightarrow\Cr_2(\kk),\ \ g\mapsto \psi\circ g\circ \psi^{-1}.
\]
We say that $G$ (or the $G$-surface $S$) is \emph{linearizable} if there is a birational map $\psi\colon S\dasharrow\PP^2$ such that $i_{\psi}(G)\subset\Aut(\PP^2)\simeq\PGL_3(\kk)$. If $S'$ is another rational $G$-surface with a birational map $\psi'\colon S'\dasharrow\PP^2$, then the subgroups $i_\psi(G)$ and $i_{\psi'}(G)$ are conjugate if and only if $G$-surfaces $S$ and $S'$ are $G$-birationally equivalent. In other words, a birational equivalence class of $G$-surfaces defines a conjugacy class of subgroups of $\Cr_2(\kk)$ isomorphic to $G$. To sum up, there is a natural bijection between the conjugacy classes of finite subgroups $G\subset\Cr_2(\kk)$ and $G$-birational equivalence classes of rational $G$-Mori fibre spaces of dimension 2.
	
\subsection{Sarkisov program and birational rigidity}\label{subsec: Sarkisov theory} According to the equivariant version of the \emph{Sarkisov program} (see e.g. \cite[Section 7]{DolgachevIskovskikh}), every $G$-birational map between two $G$-Mori fibre spaces can be decomposed into a sequence of $G$-isomorphisms and some ``elementary'' $G$-birational maps, called \emph{Sarkisov $G$-links}. These links come in four types.

\vspace{0.3cm}

{\bf Type I}.
\[
\xymatrix{
	&& T\ar@{->}[dll]_{\eta}\ar@{->}[d]^{\pi}\\
	S\ar[d]&& \PP^1\ar[dll]\\
	{\rm pt} &&
}\]
where $S$ is a $G$-del Pezzo surface, $\eta$ is the blow-up of a $G$-orbit on $S$, and $\pi\colon T\to\PP^1$ is a $G$-conic bundle.

\vspace{0.3cm}

{\bf Type II}.
\[\xymatrix{
	&T\ar@{->}[dl]_{\eta}\ar@{->}[dr]^{\eta'}&\\
	S\ar@{-->}[rr]^{\chi}\ar[dr]&& S'\ar[dl]\\
	& {B} &}
\]
where $\eta$ and $\eta'$ are blow-ups of $G$-orbits on $S$ and $S'$, of lengths $d$ and $d'$, correspondently. In this case $S$ and $S'$ are $G$-del Pezzo surfaces if $B=\pt$, or $S$ and $S'$ are $G$-conic bundles if $B\simeq \PP^1$.

\vspace{0.3cm}

{\bf Type III}.
\[
\xymatrix{
	T\ar@{->}[drr]^{\eta}\ar@{->}[d]_{\pi} && \\
	\PP^1\ar[drr] && S\ar[d]\\
	&& {\rm pt}
}
\]
This link is the inverse to the link of type I.

{\bf Type IV}.
\[
\xymatrix{
	&T\ar@{->}[dl]_{\pi}\ar@{->}[dr]^{\pi'}&\\
	\PP^1\ar[dr]&& \PP^1\ar[dl]\\
	& {\rm pt} &}
\]
This link is the choice of a conic bundle structure on a $G$-conic bundle $T$, which has exactly two such structures. Note that in general such link is not represented by a biregular automorphism of $T$, which exchanges $\pi$ and $\pi'$.

The complete classification of Sarkisov $G$-links between 2-dimensional $G$-Mori fibre spaces was obtained by V. Iskovskikh, see \cite[Theorem 2.6]{Isk1996} and \cite[Propositions 7.12, 7.13]{DolgachevIskovskikh}. We will use extensively this classification in what follows. 

\begin{convention*}
	From now on, when talking about Sarkisov links of some specific types, we always use the notation from the diagrams above. For example, for a Sarkisov link of type ~$\II$ starting at a $G$-del Pezzo surface $S$, $\eta$ always denote the blow-up of $S$.
\end{convention*}

Some $G$-del Pezzo surfaces and $G$-conic bundles have essentially unique structure of a $G$-Mori fibre space. To be more precise, one introduces the following notions.

\begin{definition}
	A $G$-del Pezzo surface $S$ is called \emph{$G$-birationally rigid} if for any $G$-birational map $\chi\colon S\dashrightarrow S'$ to a $G$-Mori fibre space $S'$, the surfaces $S$ and $S'$ are $G$-isomorphic (not necessarily via $\chi$). Furthermore, $S$ is called \emph{$G$-birationally superrigid}, if every such map $\chi$ is already $G$-isomorphism. 
	
	Similarly, a $G$-conic bundle $\pi\colon S\to\PP^1$ is \emph{$G$-birationally rigid}, if for any $G$-birational map $\chi\colon S\dashrightarrow S'$ to the total space of another $G$-conic bundle $\pi'\colon S'\to\PP^1$, the latter is square $G$-birational to the former (not necessarily via $\chi$), i.e. there is a diagram of $G$-maps
	\[
	\xymatrix{
		S\ar@{-->}[rr]^{\varphi}\ar[d]_{\pi} && S'\ar[d]^{\pi'}\\
		\PP^1\ar@{->}[rr]^{\delta} && \PP^1,		
		}
	\]
	where $\delta$ is an isomorphism, and $\varphi$ is birational. If this diagram holds already for $\varphi=\chi$, then $\pi\colon S\to\PP^1$ is called \emph{$G$-birationally superrigid}. 
\end{definition}

Here are two classic examples of $G$-birationally (super)rigid Mori fibre spaces.

\begin{theorem}[Manin--Segre]\label{theorem: Manin-Segre}
	Let $S$ be a $G$-del Pezzo surface. If $K_S^2\leqslant 3$, then $S$ is $G$-birationally rigid. If $K_S^2=1$, then $S$ is $G$-birationally superrigid.
\end{theorem}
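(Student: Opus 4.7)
The plan is to apply the equivariant Sarkisov program recalled in Section~\ref{subsec: Sarkisov theory}. Suppose $\chi\colon S\dashrightarrow S'$ is a $G$-birational map to a second $G$-Mori fibre space $S'$; by the $G$-Sarkisov theorem, $\chi$ factors into a finite chain of $G$-links, so it is enough to analyse every individual $G$-link that can emanate from a $G$-del Pezzo surface with $K_S^2\leqslant 3$ and verify that its target is $G$-isomorphic to the source (for superrigidity when $K_S^2=1$, one must moreover show that no non-trivial link exists at all). Since $S$ is a del Pezzo and not a conic bundle, only type $\I$ and type $\II$ links can start at $S$, and both begin with the blow-up $\eta\colon T\to S$ of a $G$-orbit $\Sigma\subset S$ of some length $d$.

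The key constraint is that $T$ must be a \emph{weak} $G$-del Pezzo: $-K_T=\eta^*(-K_S)-E$ has to remain nef, and $T$ must then admit either a $G$-equivariant conic bundle structure (type $\I$) or a further $G$-equivariant divisorial contraction to another $G$-del Pezzo (type $\II$). Combined with the assumption $\Pic(S)^G\simeq\ZZ$, this forces a numerical bound on $d$ and puts $\Sigma$ in sufficiently general position on $S$ (no three collinear, no six on a conic, and so on). When $K_S^2=1$, blowing up any $G$-orbit of minimal length produces a surface with $K_T^2=0$, that is, a rational elliptic surface rather than a Mori fibration, so no candidate link can close up into a Sarkisov diagram; this is precisely the content of Iskovskikh's tabulation \cite[Theorem 2.6]{Isk1996} in degree~$1$ and yields superrigidity.

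For $K_S^2\in\{2,3\}$, Iskovskikh's tables do list type $\II$ links blowing up and then contracting small $G$-orbits, but in each case the resulting $G$-del Pezzo has the same degree as $S$ and is $G$-isomorphic to $S$; the isomorphism is realised not by $\chi$ itself but by a biregular symmetry of $S$ (a Geiser-type involution in degree $2$, or a cubic-surface birational involution centred at a small $G$-invariant configuration in degree $3$). This yields rigidity without superrigidity. The main obstacle is the careful case-by-case verification in Iskovskikh's tables that the numerical and $\Pic(S)^G\simeq\ZZ$ constraints together with the $G$-action indeed force the target to land back in the $G$-isomorphism class of $S$; the underlying non-equivariant input is the classical Noether--Fano inequality, which restricts the centres of any potential Sarkisov link to highly constrained positions on a del Pezzo of low degree.
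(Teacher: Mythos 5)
Your proposal is correct and follows essentially the same route as the paper, which likewise reduces the statement to the Noether--Fano method and Iskovskikh's classification of two-dimensional Sarkisov $G$-links \cite[Theorem 2.6]{Isk1996}. The only imprecision is in your degree $2$ and $3$ discussion: the relevant type $\II$ links are birational Bertini/Geiser self-maps $\chi\colon S\dashrightarrow S'=S$, so the target is literally $S$ and rigidity fails to upgrade to superrigidity simply because $\chi$ itself is not biregular, rather than because some auxiliary biregular symmetry identifies $S'$ with $S$.
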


A modern proof of this theorem follows from the Noether--Fano method (or, formally, from the classification of Sarkisov $G$-links \cite[Theorem 2.6]{Isk1996}). 

The birational self-maps of $G$-conic bundles with at least 8 singular fibres are also easy to describe: they admit only fibrewise birational transformations.

\begin{theorem}[{\cite[Theorem 1.6]{IskovskikhCB}, \cite[Theorem 2.10]{CheltsovMangolteYasinskyZimmermann}}]\label{theorem: superrigid conic bundles}
	Let $\pi\colon S\to\PP^1$ be a $G$-conic bundle such that $K_S^2\leqslant 0$. Then the~following two assertions hold:
	\begin{itemize}
		\item[(i)] $S$ is not $G$-birational to a~smooth (weak) del Pezzo surface;
		\item[(ii)] Moreover, such $S$ is a $G$-birationally superrigid $G$-conic bundle.
	\end{itemize}
\end{theorem}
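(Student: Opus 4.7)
We apply the equivariant Sarkisov program recalled in Section~\ref{subsec: Sarkisov theory}: any $G$-birational map from $S$ to another two-dimensional $G$-Mori fibre space decomposes into a finite sequence of Sarkisov $G$-links. Since $S$ is a conic bundle and not a del Pezzo, only links of types $\II$, $\III$, and $\IV$ can begin at $S$. The strategy is to show: \emph{(a)} every type~$\II$ link from~$S$ is an elementary transformation of conic bundles over~$\PP^1$, hence square $G$-birational and preserves $K^2$; \emph{(b)} no type~$\IV$ link is available, i.e.\ $S$ admits a unique $G$-invariant $\PP^1$-fibration; and \emph{(c)} no type~$\III$ link is available. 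Granting \emph{(a)}--\emph{(c)}, every Sarkisov chain starting at $S$ stays within the class of conic bundles square $G$-birational to $S$ with $K^2\leqslant 0$, which yields~(ii).

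For \emph{(a)} and \emph{(b)} we invoke Iskovskikh's classification of two-dimensional Sarkisov links (\cite[Theorem~2.6]{Isk1996}, \cite[Proposition~7.13]{DolgachevIskovskikh}). A type~$\II$ link blows up a $G$-orbit on~$S$ and contracts a $G$-orbit on the resulting surface, producing a conic bundle $S'$ over the same base; by construction the map is square, and $K_{S'}^2=K_S^2$ because orbits of equal length are blown up and down. A type~$\IV$ link would require a second $G$-invariant $\PP^1$-fibration on~$S$; on a smooth rational surface such a second fibration forces $S\simeq\PP^1\times\PP^1$, for which $K_S^2=8$, incompatible with our hypothesis.

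The crux is \emph{(c)}. A type~$\III$ link starting at $S$ consists of a $G$-orbit $\{E_1,\ldots,E_k\}\subset S$ of pairwise disjoint $(-1)$-curves whose simultaneous contraction produces a smooth $G$-del Pezzo $S'$ with $\Pic(S')^G\simeq\ZZ$. The resulting surface has $K_{S'}^2=K_S^2+k$, so $K_S^2\leqslant 0$ forces $k\geqslant 8$. On a conic bundle the $(-1)$-curves are components of singular fibres (and possibly $(-1)$-sections), and disjointness together with $G$-equivariance severely restrict the admissible configurations. Iskovskikh's numerical criterion, derived from the Sarkisov (Noether--Fano) inequality applied to a conic bundle with at least eight singular fibres, rules out every such orbit.

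To deduce (i), suppose $S$ is $G$-birational to a smooth weak del Pezzo $X$. Running the $G$-MMP on $X$ produces a $G$-MFS $X'$ with $K_{X'}^2\geqslant K_X^2>0$, because each step contracts a $G$-orbit of $(-1)$-curves and hence strictly increases $K^2$. The $G$-birational map $S\dashrightarrow X'$ decomposes into Sarkisov links; by~\emph{(c)}, applied at each intermediate conic bundle (the property $K^2\leqslant 0$ being preserved by \emph{(a)} and \emph{(b)}), no type~$\III$ link ever occurs, so $X'$ cannot be a del Pezzo. Hence $X'$ is a conic bundle square $G$-birational to $S$, which forces $K_{X'}^2=K_S^2\leqslant 0$ and contradicts $K_{X'}^2>0$. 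The principal obstacle is step~\emph{(c)}: excluding every $G$-orbit of disjoint $(-1)$-curves whose contraction yields a smooth del Pezzo with cyclic $G$-invariant Picard group. Steps~\emph{(a)} and~\emph{(b)} reduce to numerical bookkeeping, and the deduction of~(i) from~(ii) is a formal MMP argument; but \emph{(c)} demands a careful combinatorial analysis of $G$-orbits of components of singular fibres together with a sharp application of the Sarkisov inequality.
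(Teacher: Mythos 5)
The paper does not actually prove this theorem --- it is quoted from \cite{IskovskikhCB} and \cite{CheltsovMangolteYasinskyZimmermann} --- so I am judging your outline against the known arguments. Your architecture (decompose any $G$-birational map into Sarkisov links, show only type~$\II$ links can start at $S$, and note that these are square and preserve $K^2$) is the right one, and your deduction of (i) from the link analysis is sound. But two of your three exclusion steps are defective. Your reason for excluding type~$\IV$ links is false: a second $\PP^1$-fibration on a smooth rational surface does \emph{not} force $S\simeq\PP^1\times\PP^1$; del Pezzo surfaces of degree $4$, $2$, $1$ carry exactly two conic bundle structures --- this is recorded in Theorem~\ref{theorem: minimal conic bundles}(3) of this very paper, and these are precisely the sources of type~$\IV$ links. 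The correct argument when $\rk\Pic(S)^G=2$ and $K_S^2\leqslant 0$ is a short computation in $\Pic(S)^G\otimes\QQ=\langle K_S,F\rangle$: a second fibre class $F'=aF+bK_S$ with $F'^2=0$ and $-K_S\cdot F'=2$ has no solution besides $F'=F$ when $K_S^2=0$, and otherwise forces $a=-1$, $b=-4/K_S^2$, whence $F\cdot F'=-2b=8/K_S^2<0$, impossible for two distinct nef fibre classes.

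More seriously, the crux (c) is not actually proved: you invoke ``Iskovskikh's numerical criterion'' as a black box, and the one inequality you do state, that $K_S^2\leqslant 0$ forces $k\geqslant 8$, is a non sequitur --- $K_{S'}^2=K_S^2+k\geqslant 1$ only gives $k\geqslant 1-K_S^2$. The exclusion can in fact be done by hand, again inside $\Pic(S)^G\otimes\QQ$: if $E=\sum_{i=1}^k E_i\equiv aF+bK_S$ is the $G$-orbit of pairwise disjoint $(-1)$-curves contracted by a type~$\III$ link, then $E\cdot K_S=E^2=-k$ and $E\cdot F\geqslant 0$ give $-2a+bK_S^2=-k$, $-4ab+b^2K_S^2=-k$ and $b\leqslant 0$; eliminating $a$ yields $k(1-2b)=b^2K_S^2$, whose left-hand side is at least $k>0$ while the right-hand side is $\leqslant 0$. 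Without some such computation (or the Noether--Fano inequality underlying Iskovskikh's classification of links), your proposal records the shape of the proof but not its content.
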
 

\subsection{$G$-minimal surfaces}

So far we have reduced the problem of classification of finite subgroups in $\Cr_2(\kk)$ to classification of $G$-Mori fibre spaces up to $G$-birational equivalence. Note that $G$-conic bundles $\pi\colon S\to B$ do not have to be $G$-minimal in the absolute sense, i.e. it is not necessarily true that evey $G$-birational morphism $S\to T$ is $G$-isomorphism (a trivial example is the blow-up $\FF_1\to\PP^2$ of a $G$-fixed point on $\PP^2$). However, we have precise description of all such cases. All results below are essentially due to V. Iskovskikh. 

\begin{theorem}[{\cite[Theorems 4 and 5]{Iskovskikh_minimal}}]\label{theorem: minimal conic bundles}
	Let $\pi\colon S\to\PP^1$ be a $G$-conic bundle.
	\begin{enumerate}
		\item\label{minmodels 1} If $S$ is not $G$-minimal, then $S$ is a del Pezzo surface $($in particular, $K_S^2\geqslant 1$$)$.
		\item\label{minmodels 2} Assume that $1\leqslant K_S^2\leqslant 8$. Then $S$ is $G$-minimal if and only if $K_S^2\in\{1,2,4,8\}$.
		\item\label{minmodels 3} If $K_S^2\in\{1,2,4\}$ then $S$ is a del Pezzo surface if and only if there are exactly two $G$-conic bundle structures on $S$.
	\end{enumerate}
\end{theorem}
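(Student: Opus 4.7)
The analysis rests on the classification of $(-1)$-curves on a conic bundle $\pi : S \to \PP^1$: each such curve is either a component of a singular fibre (meeting another $(-1)$-curve at a node) or a rational (multi-)section. Since $\pi$ is $G$-equivariant, every $G$-orbit of disjoint $(-1)$-curves decomposes into orbits of each type separately. I will also use the identity $K_S^2 = 8 - n$, where $n$ is the number of singular fibres of $\pi$.

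For (1), I assume $S$ is not $G$-minimal and decompose a non-isomorphic $G$-birational morphism $\phi : S \to T$ into elementary $G$-equivariant contractions. Each step removes a $G$-orbit of disjoint $(-1)$-curves. If the orbit consists of fibre components the result is another $G$-conic bundle with fewer singular fibres and we iterate; otherwise it consists of disjoint rational (multi-)sections. The disjointness and intersection properties of such a section orbit tightly constrain $S$: one verifies $-K_S \cdot C > 0$ for every irreducible curve $C$ by ruling out $(-m)$-sections with $m \geqslant 2$ using the contractibility assumption. Hence $-K_S$ is ample, and $S$ is del Pezzo.

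For (2), the case $K_S^2 = 8$ reduces to $n=0$ and $S = \FF_m$, for which $G$-minimality is clear since the only $(-1)$-curve on $\FF_1$ is already excluded from the $G$-Mori fibre space setup. Otherwise, a $G$-orbit of disjoint fibre components is $G$-equivariantly contractible if and only if some critical value $p \in \PP^1$ has stabilizer $G_p$ preserving each component of $\pi^{-1}(p)$ individually. If no such $p$ exists then $G_p$ swaps both components at every critical point, and an orbit-size analysis on the critical set $\Sigma \subset \PP^1$ combined with this swapping condition forces $K_S^2 \in \{1,2,4\}$ within the range $1 \leqslant K_S^2 \leqslant 7$. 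Conversely, for $K_S^2 \in \{3,5,6,7\}$ I would exhibit an explicit contractible $G$-orbit of fibre components, violating $G$-minimality.

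For (3), $G$-invariant conic bundle structures on a del Pezzo surface of degree $d \in \{1,2,4\}$ correspond to $G$-invariant pencils of rational curves of anticanonical degree $2$, and under the Weyl group $W(E_{9-d})$ action on the $(-1)$-curves these pencils naturally come in complementary pairs matching the two ``halves'' of the reducible-fibre lattice. Hence on a del Pezzo of these degrees the given $G$-conic bundle admits exactly one further $G$-invariant structure. Conversely, if $S$ is not del Pezzo it carries a $(-m)$-section with $m \geqslant 2$, which obstructs a second $G$-invariant pencil. The main obstacle I anticipate is the combinatorial bookkeeping in (2), namely identifying precisely which orbit configurations on $\Sigma$ survive the swapping constraint and single out the set $\{1,2,4,8\}$; part (3) additionally requires explicit lattice calculations on small del Pezzo surfaces.
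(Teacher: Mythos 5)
First, note that the paper does not prove this statement at all: it is quoted verbatim from Iskovskikh's work \cite{Iskovskikh_minimal}, so there is no in-paper argument to compare with, and your sketch has to be measured against the classical proof.

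Measured that way, there is a concrete wrong turn at the centre of your plan for (1) and (2). For a $G$-conic bundle in the Mori fibre space sense, the hypothesis $\Pic(S)^G\simeq\ZZ^2$ \emph{forces} the two components of every singular fibre to be swapped by the stabilizer of that fibre (otherwise the classes $E_i-E_i'$ contribute extra invariant classes and $\rk\Pic(S)^G>2$). Consequently a $G$-orbit of singular-fibre components is never a disjoint union of $(-1)$-curves, so your branch ``the orbit consists of fibre components, contract and iterate'' is vacuous, your proposed dichotomy on stabilizers $G_p$ in (2) collapses (the swapping condition always holds and by itself puts no constraint on the number of singular fibres), and your converse step ``for $K_S^2\in\{3,5,6,7\}$ exhibit a contractible $G$-orbit of fibre components'' cannot be carried out — the contractible orbits in those cases consist of \emph{sections} (e.g.\ the two opposite sections $E_1,E_4$ of the hexagon on the sextic del Pezzo surface). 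All the content therefore sits in the one branch you defer: an orbit $C_1,\dots,C_r$ of disjoint $(-1)$-multisections. There the proof is arithmetic: $D=\sum C_i$ lies in $\Pic(S)^G\otimes\QQ=\QQ K_S\oplus\QQ F$, so writing $D\equiv -aK_S+bF$ and comparing $D^2=-r$, $K_S\cdot D=-r$, $D\cdot F=r(C_1\cdot F)$ yields the Diophantine constraints that single out $\{3,5,6,7\}$ as the non-minimal degrees and establish ampleness of $-K_S$. Your sketch asserts that disjointness ``tightly constrains $S$'' but never sets up this computation, which is the entire proof.

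Part (3) has a similar gap: the reason the degrees $1,2,4$ are distinguished is that the class $F'=-\tfrac{4}{K_S^2}K_S-F$ satisfies $(F')^2=0$, $-K_S\cdot F'=2$, and is automatically $G$-invariant, giving the second conic bundle structure exactly when $K_S^2$ divides $4$; on a non-del Pezzo $S$ this class fails to be nef. Your appeal to ``complementary pairs under the Weyl group action'' gestures at this but does not identify the mechanism, and as written it would not let you count the structures or prove the converse. Finally, be aware that part (2) as stated in the paper is already delicate at $K_S^2=8$: $\FF_1$ is a $G$-conic bundle but is never $G$-minimal in the absolute sense (the paper itself excludes it in the Summary), so ``the only $(-1)$-curve on $\FF_1$ is already excluded from the setup'' is not an argument one can make inside the proof.
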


\begin{lemma}[see {\cite[Theorem 5]{Iskovskikh_minimal}}]
	\label{lem:cbdeg7}
	Let $S$ be a rational $G$-surface with $K_S^2=7$. Then $S$ is not a $G$-Mori fibre space. In other words, $S$ is neither a $G$-del Pezzo surface, nor a $G$-conic bundle.
\end{lemma}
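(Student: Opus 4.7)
The plan is to analyse the configuration of negative curves on $S$ and extract from it a canonically defined $(-1)$-curve that obstructs every possible $G$-Mori fibre space structure.

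First, I would observe that a smooth rational surface with $K_S^2=7$ necessarily has Picard rank~$3$ (by Noether's formula $c_2=12-K_S^2$), and, by running the MMP, $S$ is either \emph{(i)} the del Pezzo surface of degree $7$, namely $\mathrm{Bl}_{p_1,p_2}\PP^2$, or \emph{(ii)} a blow-up $\mathrm{Bl}_p\FF_n$ of a Hirzebruch surface that is not isomorphic to $\mathrm{dP}_7$. In case \emph{(i)} the three $(-1)$-curves $E_1, L_{12}, E_2$ form a chain of length $3$ with $L_{12}$ (the strict transform of the line through $p_1,p_2$) in the middle. In case \emph{(ii)} the surface $S$ contains an irreducible curve of self-intersection $\leq -2$, and its three negative curves again form a chain of length $3$ in which a $(-1)$-curve sits in the middle. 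In either case, the middle vertex is determined by purely combinatorial data, so it is $\Aut(S)$-invariant; I denote it $L$.

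To rule out $S$ being a $G$-del Pezzo, I would observe that $[K_S]$ and $[L]$ both lie in $\Pic(S)^G$, and their Gram matrix $\bigl(\begin{smallmatrix} 7 & -1 \\ -1 & -1\end{smallmatrix}\bigr)$ has nonzero determinant, so these two classes are linearly independent. Hence $\rank \Pic(S)^G \geq 2$, contradicting the requirement $\Pic(S)^G\simeq\ZZ$.

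To rule out $S$ being a $G$-conic bundle I would argue separately in the two cases. In case \emph{(ii)}, the three curves in the chain have pairwise distinct combinatorial invariants (distinct self-intersections, or distinct intersection numbers with the unique most-negative curve), so $G$ must fix each of them individually; a quick Gram matrix computation shows that their classes span $\Pic(S)\otimes\QQ$, yielding $\Pic(S)^G=\Pic(S)\simeq\ZZ^3$ and contradicting $\Pic(S)^G\simeq\ZZ^2$. In case \emph{(i)}, $G$ fixes $L_{12}$ and either fixes both $E_1,E_2$ --- giving the same rank-$3$ contradiction --- or swaps them; but in the swap case the two conic bundle structures on $\mathrm{dP}_7$ (namely the pencils of lines through $p_1$ and through $p_2$, whose fibre classes are $H-E_1$ and $H-E_2$) are exchanged by $G$, so no $G$-equivariant conic bundle structure exists on $S$ at all. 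The principal obstacle is thus the preliminary enumeration of all rational surfaces with $K_S^2=7$ together with their negative curves; once that is in place, the rest is elementary lattice theory in $\Pic(S)$.
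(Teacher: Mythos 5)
Your proof is correct. Note that the paper does not actually prove this lemma --- it is imported wholesale from Iskovskikh's classification of minimal rational $G$-surfaces (\cite[Theorem 5]{Iskovskikh_minimal}) --- so what you have written is a self-contained replacement rather than a variant of an argument in the text, and it is a clean one. All the key steps check out: Noether's formula gives $\rho(S)=3$; the negative curves of any such $S$ (namely $\mathrm{dP}_7$, or $\mathrm{Bl}_q\FF_n$ with $q\in\Sigma_n$ or $q\notin\Sigma_n$, which together also cover the infinitely near configurations over $\PP^2$) form a chain of length three whose middle vertex is a $(-1)$-curve $L$, canonically determined and hence $G$-invariant; and the Gram determinant $\det\bigl(\begin{smallmatrix}7&-1\\-1&-1\end{smallmatrix}\bigr)=-8\neq 0$ forces $\rank\Pic(S)^G\geqslant 2$, which disposes of the $G$-del Pezzo case in one stroke. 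Your treatment of the conic bundle case is also right, and in particular you correctly isolate the only delicate subcase: when $G$ swaps $E_1$ and $E_2$ on $\mathrm{dP}_7$ the invariant Picard rank \emph{is} $2$, so the lattice argument alone cannot conclude, and one must instead observe that the only two classes $F$ with $F^2=0$ and $-K_S\cdot F=2$, namely $H-E_1$ and $H-E_2$, are interchanged by $G$, so no $G$-equivariant fibration exists. Two small points worth spelling out in a final write-up: (a) the verification that $\mathrm{Bl}_q\FF_n$ carries no negative curves beyond the three in your chain (a one-line computation with $C\sim a\Sigma_n+bF$ and $\mathrm{mult}_q C\leqslant a$), and (b) in the non-$\mathrm{dP}_7$ cases the two $(-1)$-curves are distinguished by their position in the chain (middle versus end adjacent to the unique curve of self-intersection $\leqslant -2$), which is exactly the combinatorial invariant you invoke.
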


For a $G$-conic bundle $\pi\colon S\to\PP^1$, Noether's formula implies that $K_S^2=8-c$, where $c$ is the number of singular fibres of $\pi$. In particular, $K_S^2\leqslant 8$. Besides, Theorem~\ref{theorem: superrigid conic bundles} says that $G$-conic bundles with $c\geqslant 8$ are $G$-birationally superrigid and hence are not $G$-birational to~$\PP^2$. For our purposes we may assume that $S$ is also $G$-minimal (in the absolute sense), hence  $K_S^2\in\{ 1,2,4,8\}$ by Theorem \ref{theorem: minimal conic bundles}. 

\begin{proposition}
	Let $\pi\colon S\to\PP^1$ be a $G$-conic bundle. If $K_S^2\in\{1,2,4\}$, then $G$ is not linearizable.
\end{proposition}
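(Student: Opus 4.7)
The plan is to argue by contradiction using the equivariant Sarkisov program. Suppose $G$ is linearizable and fix a $G$-birational map $\chi\colon S \dashrightarrow \PP^2$ with $G$ acting linearly on $\PP^2$. Since $K_S^2 \leqslant 4 < 9 = K_{\PP^2}^2$, this $\chi$ cannot be an isomorphism, and by Theorem \ref{theorem: minimal conic bundles}\eqref{minmodels 2} the source $S$ is $G$-minimal in the absolute sense, so the Sarkisov program produces a nontrivial decomposition
\[
S = T_0 \dashrightarrow T_1 \dashrightarrow \cdots \dashrightarrow T_n = \PP^2
\]
into Sarkisov $G$-links. Because $T_0$ is a $G$-conic bundle while $T_n$ is a $G$-del Pezzo surface, there exists a smallest index $j$ for which $T_j$ is a $G$-conic bundle and $T_{j+1}$ is a $G$-del Pezzo surface; the corresponding link must be of Type \III\ by definition.

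The first step is to track the canonical self-intersection along the conic-bundle portion $T_0, \ldots, T_j$ of the chain. A Type \II\ link between two $G$-conic bundles is an elementary transformation that simultaneously blows up and contracts $G$-orbits of equal length, hence preserves $K^2$; and a Type \IV\ link is merely the choice of a different ruling on the same underlying surface, which also preserves $K^2$. Consequently,
\[
K_{T_j}^2 \;=\; K_{T_0}^2 \;=\; K_S^2 \;\in\; \{1,2,4\},
\]
so $T_j$ carries $c = 8 - K_{T_j}^2 \in \{4,6,7\}$ singular fibres by Noether's formula.

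The main step is then to rule out the existence of a Type \III\ link issuing from such a $G$-conic bundle $T_j$. For this I would consult Iskovskikh's explicit classification of Sarkisov $G$-links stated in \cite[Theorem 2.6]{Isk1996} (equivalently \cite[Propositions 7.12, 7.13]{DolgachevIskovskikh}): each listed Type \III\ link originates from a $G$-conic bundle with at most three singular fibres, so the required link does not exist, and we reach a contradiction.

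The subtle case is $K_S^2 = 4$, where $T_j$ might simultaneously be a smooth del Pezzo surface of degree $4$ admitting two distinct $G$-conic bundle structures by Theorem \ref{theorem: minimal conic bundles}\eqref{minmodels 3}; both rulings share the value $c = 4$, however, so Iskovskikh's tables still forbid a Type \III\ exit after any Type \IV\ switch of ruling. Thus the bulk of the technical work is the direct verification against Iskovskikh's list rather than any elaborate new geometric construction.
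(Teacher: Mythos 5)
Your strategy is essentially the paper's: decompose a hypothetical linearization into Sarkisov $G$-links, note that links of type $\II$ and $\IV$ between $G$-conic bundles preserve $K^2$, and then rule out a type $\III$ exit from a conic bundle with $K_S^2\in\{1,2,4\}$. The argument is correct, but the justification you offer for the key step is factually inaccurate: it is \emph{not} true that every type $\III$ link in Iskovskikh's classification originates from a $G$-conic bundle with at most three singular fibres. There are type $\III$ links from conic bundles with five singular fibres, namely cubic surfaces carrying a $G$-conic bundle structure that contract to quartic del Pezzo surfaces --- these are precisely the inverses of the type $\I$ links appearing in the paper's degree-4 analysis, Diagram (\ref{eq: sequence of links}). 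Your conclusion for $c\in\{4,6,7\}$ still holds, but the clean reason is Theorem \ref{theorem: minimal conic bundles}(\ref{minmodels 2}): a $G$-conic bundle with $K_S^2\in\{1,2,4\}$ is $G$-minimal in the absolute sense, hence admits no $G$-birational morphism onto a del Pezzo surface, i.e.\ no link of type $\III$; this is exactly the paper's one-line argument. Your worry about the case $K_S^2=4$ is resolved the same way: the second ruling only produces a type $\IV$ link, and $G$-minimality (hence the absence of type $\III$ links) is unaffected.
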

\begin{proof}
	Indeed, $S$ is $G$-minimal by Theorem \ref{theorem: minimal conic bundles}. In particular, it does not admit Sarkisov links of type $\III$, i.e. contractions to $G$-del Pezzo surfaces. Hence, all Sarkisov $G$-links starting from $S$ are of type $\II$ (elementary transformations) or $\IV$ and do not change $K_S^2$. Therefore, $G$ is not linearizable.
\end{proof}

\textbf{Summary.} So, it remains to investigate the linearizability in the following two cases:
\begin{enumerate}
	\item[---] $S$ is a $G$-del Pezzo surface, where we can assume that $K_S^2\geqslant 4$ by Theorem \ref{theorem: Manin-Segre}. If $K_S^2=8$, we can assume that $S$ is not the blow-up of $\PP^2$ in a $G$-fixed point, because such $S$ is not $G$-minimal; for the same reason, we skip the case $K_S^2=7$, see Lemma~\ref{lem:cbdeg7}.
	\item[---] $S$ is a $G$-conic bundle $\pi\colon S\to\PP^1$ with no singular fibres, i.e. $S$ is a Hirzebruch surface $\FF_n$ acted on by a finite group $G$.
\end{enumerate}

\section{Group theory}\label{sec: group theory}

This Section is entirely devoted to auxiliary results from the theory of finite groups. These results are either elementary (nevertheless, we provide proofs for the reader's convenience) or pertain to the classical representation theory.

\begin{notation}\label{notation: group theory}
	Throughout our paper, we use the following standard notations:
	\begin{enumerate}
		\item[---] $\omega_n=\exp(2\pi i/n)$, where $n\in\mathbb{N}$;
		\item[---] $\Cyc_n$ is a cyclic group of order $n$;
		\item[---] $\Klein_4\simeq\Cyc_2\times\Cyc_2$ is Klein's Vierergruppe;
		\item[---] $\Quat_8$ is the quaternion group;
		\item[---] $\Dih_n$ is the dihedral group of order $2n$;
		\item[---] $\Sym_n$ is the permutation group of degree $n$;
		\item[---] $\Alt_n$ is the alternating group of degree $n$;
		\item[---] $\Hol(G)=G\rtimes\Aut(G)$ is the holomorph of a group $G$;
		\item[---] $\Frobenius_5=\Hol(\Cyc_5)$ is the Frobenius group of order 20;
		\item[---] $\Dih(A)$ is the generalized dihedral group over an abelian group $A$;
		\item[---] $A_\bullet B$ is an extension (not necessarily split) of $B$ with a normal subgroup $A$;
		\item[---] $A\times_Q B$ is a fibred product of $A$ and $B$ over their common homomorphic image $Q$; see Section \ref{subsec: Goursat} for more details;
		\item[---] $G\wr\Sym_n$ is the wreath product, i.e. the semi-direct product $G^n\rtimes\Sym_n$, where $\Sym_n$ acts on $G^n$ by permuting the factors;
		\item[---] $\Centralizer_G(H)$ is the centralizer of $H\subset G$ in $G$;
		\item[---] $\Norm_G(H)$ is the normalizer of $H\subset G$ in $G$;
		\item[---] $\Center(G)$ is the centre of a group $G$;
		\item[---] $\Inn(G)$ is the inner automorphism group of a group $G$. 
	\end{enumerate}
\end{notation}

\subsection{Klein's classification}\label{subsec: PGL2}

First, we recall the following classical result of Felix Klein.

\begin{proposition}[{\cite{KleinIcosahedron}}]\label{prop: Klein}
	If $\kk$ is an algebraically closed field of characteristic zero, then every finite subgroup of $\PGL_2(\kk)$ is isomorphic to $\Cyc_n$, $\Dih_n$ $($where $n\geqslant 1$$)$, $\Alt_4$, $\Sym_4$ or $\Alt_5$. Moreover, there is only one conjugacy class for each of these groups. 
\end{proposition}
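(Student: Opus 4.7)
The plan is to reduce to the case $\kk=\CC$ and then follow Klein's classical topological/representation-theoretic argument. Since any finite subgroup of $\PGL_2(\kk)$ is generated by finitely many matrices and hence defined over a finitely generated subfield of $\kk$, we can embed this subfield into $\CC$ without changing the isomorphism type (nor the conjugacy class, since conjugating elements can also be chosen from a larger finitely generated extension). So it suffices to work in $\PGL_2(\CC)$.

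\smallskip

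First I would show that every finite subgroup $G\subset\PGL_2(\CC)$ is conjugate to a subgroup of $\PSU(2)\simeq\SO(3)$. Lift $G$ to a finite subgroup $\widetilde{G}\subset\SL_2(\CC)$ via the central extension $1\to\mu_2\to\SL_2(\CC)\to\PGL_2(\CC)\to 1$ (the preimage is finite), then average the standard Hermitian form on $\CC^2$ over $\widetilde G$ to obtain a $\widetilde G$-invariant positive Hermitian form; a basis diagonalizing it conjugates $\widetilde G$ into $\SU(2)$, and hence $G$ into $\PSU(2)\simeq\SO(3)$.

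\smallskip

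Next I would run the classical pole-counting argument for finite subgroups of $\SO(3)$ acting on $S^2\subset\RR^3$. Each non-identity rotation $g\in G$ has two fixed points (``poles'') on $S^2$; counting incidences between $G\setminus\{1\}$ and the set $P$ of all poles gives
\[
2(|G|-1) \;=\; \sum_{i=1}^k |G|\left(1-\tfrac{1}{n_i}\right),
\]
where $n_1,\ldots,n_k$ are the stabilizer orders of representatives of the $G$-orbits on $P$. Since each $1-1/n_i\geqslant 1/2$ and the left side is $<2$, one finds $k\in\{2,3\}$, and a short case analysis yields exactly the five families $(n_1,n_2)=(n,n)$, $(n_1,n_2,n_3)=(2,2,n),(2,3,3),(2,3,4),(2,3,5)$, corresponding to $\Cyc_n$, $\Dih_n$, $\Alt_4$, $\Sym_4$, $\Alt_5$.

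\smallskip

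Finally, for the uniqueness of conjugacy classes in $\PGL_2(\CC)$: a cyclic subgroup $\Cyc_n$ is determined by its pair of fixed points on $\PP^1$, and $\PGL_2(\CC)$ acts $3$-transitively on $\PP^1$, so all such subgroups are conjugate. The same transitivity handles $\Dih_n$ after fixing the two poles of its cyclic index-two subgroup and the involution swapping them. For $\Alt_4,\Sym_4,\Alt_5$ the argument is representation-theoretic: each of these groups has, up to an outer automorphism, a unique faithful $2$-dimensional projective representation, which follows from the classification of irreducible $2$-dimensional representations of the binary polyhedral covers in $\SL_2(\CC)$; equivalently, two embeddings into $\PSU(2)\simeq\SO(3)$ are conjugate by an element of $\Ort(3)$, and the outer part can be absorbed in $\PGL_2(\CC)$. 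The main obstacle is precisely this last uniqueness for the platonic groups; everything else is standard bookkeeping. The statement is classical (see Klein~\cite{KleinIcosahedron}), so in the write-up I would keep the argument brief and refer to the literature for the detailed verifications.
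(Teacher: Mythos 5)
The paper does not prove this statement at all: it is quoted as a classical result with a citation to Klein's book, so there is no ``paper's own proof'' to compare against. Your sketch is the standard classical argument (reduce to $\CC$ by finite generation, unitarize to land in $\PSU(2)\simeq\SO(3)$, then run the pole-counting/orbit-stabilizer computation $2(|G|-1)=\sum_i |G|(1-1/n_i)$), and it is essentially correct, including the reduction of the conjugacy statement from $\kk$ to $\CC$ via the transporter variety having a point over any algebraically closed field containing the field of definition. One small imprecision: in the uniqueness step for the platonic groups you say two embeddings are conjugate by an element of $\Ort(3)$ whose ``outer part can be absorbed in $\PGL_2(\CC)$'' --- but elements of $\Ort(3)\setminus\SO(3)$ correspond to \emph{anti}-holomorphic maps of $\PP^1$ and do not lie in $\PGL_2(\CC)$. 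The fix is standard: the relevant polyhedral configurations are centrally symmetric (for the tetrahedral group, pass to the cube formed by the two dual tetrahedra), so the conjugating element can always be taken in $\SO(3)$ itself; alternatively, the representation-theoretic route via the binary covers in $\SL_2(\CC)$ works as you indicate, since the finitely many faithful irreducible two-dimensional representations of each binary polyhedral group differ by automorphisms of the group and hence yield conjugate images. With that adjustment your write-up is a complete and appropriate justification of the cited classical fact.
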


In what follows, we will also need the following more general result which describes algebraic subgroups of $\PGL_2(\kk)$.

\begin{theorem}[{\cite[p. 31]{Kaplansky}, \cite[Theorem 1]{NguyenVanDerPutTop}}]\label{thm: algebraic subgrops of PGL2}
	Up to conjugation, every algebraic subgroup of $\PGL_2(\kk)$ is one of the following:
	\begin{enumerate}
		\item $\PGL_2(\kk)$.
		\item A finite subgroup from the Klein's list: $\Cyc_n,\Dih_n,\Alt_4,\Sym_4,\Alt_5$.
		\item The image under the natural projection $\pi\colon\SL_2(\kk)\twoheadrightarrow\PSL_2(\kk)\simeq\PGL_2(\kk)$ of the Borel (i.e. maximal solvable) subgroup
		\[
		{\mathrm B}=\left\{
		\begin{bmatrix}
			a & b\\
			0 & a^{-1}
		\end{bmatrix}\colon a\in\kk^*,b\in\kk
		\right\}\subset\SL_2(\kk).
		\]
		\item The image under the natural projection $\pi\colon\SL_2(\kk)\twoheadrightarrow\PSL_2(\kk)\simeq\PGL_2(\kk)$ of the infinite dihedral subgroup
		\[
		\Dih_\infty=\left\{
		\begin{bmatrix}
			a & 0\\
			0 & a^{-1}
		\end{bmatrix}\colon a\in\kk^*
		\right\}\cup 
		\left\{
		\begin{bmatrix}
			0 & -b\\
			b^{-1} & 0
		\end{bmatrix}\colon b\in\kk^*
		\right\}.
		\]
	\end{enumerate}
\end{theorem}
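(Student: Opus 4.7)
The plan is to invoke the classical structure theory of linear algebraic groups applied to the three-dimensional group $\PGL_2(\kk)$. Let $H\subseteq\PGL_2(\kk)$ be an algebraic subgroup and let $H^0$ denote the connected component of the identity; it is a closed normal subgroup of $H$ of finite index, and $H\subseteq\Norm_{\PGL_2(\kk)}(H^0)$. I would organize the proof according to $\dim H^0\in\{0,1,2,3\}$, determining in each case first the possibilities for $H^0$ up to conjugation, and then $H$ by computing the normalizer.

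Two extremal cases are immediate: if $\dim H^0=0$ then $H$ is finite and Klein's classification (Proposition~\ref{prop: Klein}) places $H$ in case~(2); if $\dim H^0=3$ then $H^0=\PGL_2(\kk)$ and we are in case~(1). For the intermediate dimensions, I would invoke the standard classification of connected algebraic subgroups of $\PGL_2(\kk)$: up to conjugation, the one-dimensional ones are the image of the diagonal torus $\{\diag(a,a^{-1}):a\in\kk^*\}$ and the image of the strictly upper-triangular unipotent subgroup, while the unique two-dimensional one is the Borel $\mathrm{B}$ of the statement. This reduces to two classical facts: every connected one-dimensional linear algebraic group is either a torus or additive, and $\PGL_2(\kk)$ admits no proper connected semisimple subgroup, so any connected proper subgroup of dimension $\geqslant 2$ is solvable, hence conjugate to a Borel.

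It then remains to identify the normalizers. Borels are self-normalizing in $\PGL_2(\kk)$, so $\dim H^0=2$ forces $H=\mathrm{B}$, yielding case~(3). For the diagonal torus $\mathrm{T}$, the Weyl group $\Norm(\mathrm{T})/\mathrm{T}$ has order~$2$, represented by the anti-diagonal involution; an explicit matrix computation identifies $\Norm(\mathrm{T})$ with the group $\Dih_\infty$ of case~(4), into which any $H$ with $H^0=\mathrm{T}$ embeds. For $H^0$ equal to the image of the unipotent subgroup, a direct computation shows that its normalizer in $\PGL_2(\kk)$ is the unique Borel containing it, so such $H$ already sits inside a Borel.

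The main subtlety is this last \emph{absorption step}: the statement of the theorem is to be read as listing a set of maximal representatives in each family, and the above analysis shows that every algebraic subgroup is, up to conjugation, contained in one of the four items; this relies on knowing that no additional maximal algebraic subgroup has been missed at dimensions $1$ and $2$. The only genuinely non-trivial inputs are the self-normalization of~$\mathrm{B}$ and the explicit computation of $\Norm(\mathrm{T})$, both of which follow from direct matrix calculations or, alternatively, from the general structure theory of reductive algebraic groups.
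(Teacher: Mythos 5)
The paper offers no proof of this statement: it is imported wholesale from Kaplansky and from Nguyen--van der Put--Top, so there is nothing internal to compare your argument against. Your sketch is the standard proof and is essentially sound: stratifying by $\dim H^0$, using that connected one-dimensional groups are $\mathbb{G}_m$ or $\mathbb{G}_a$ and that a two-dimensional connected subgroup is solvable (hence a Borel, since semisimple groups have dimension at least $3$), and then computing the normalizers $\Norm(\mathrm{B})=\mathrm{B}$, $\Norm(\mathrm{T})=\Dih_\infty$, $\Norm(\mathrm{U})=\mathrm{B}$, does classify all algebraic subgroups. You are also right to flag the one genuine subtlety, which is in the statement rather than the proof: read literally, ``is one of the following'' is false (e.g.\ the torus $\overline{\mathrm{T}}$ or the unipotent group $\overline{\mathrm{U}}$ is none of the four items), and the intended reading --- which is how the paper uses the result, e.g.\ in Lemma~\ref{lem: finite normalizers} --- is that every algebraic subgroup is conjugate to a finite group from Klein's list, to $\Dih_\infty$, to a subgroup of the Borel, or is all of $\PGL_2(\kk)$. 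Your case analysis delivers exactly that, since every $H$ with $H^0$ a torus is either $\overline{\mathrm{T}}$ (inside a Borel) or all of $\overline{\Dih}_\infty$, the index being $2$. The only places where your write-up is a sketch rather than a proof are the two normalizer computations, but these are routine matrix calculations, so I see no gap.
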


\begin{remark}\label{rem: structure of algebraic subgroups of PGL2}
	In the case (3), let $\overline{\mathrm B}=\pi(\mathrm{B})$, and consider the unipotent (abelian) subgroup
	\[
	\overline{\mathrm U}=\left\{
	\overline{
	\begin{bmatrix}
		1 & b\\
		0 & 1
	\end{bmatrix}}\colon b\in\kk
	\right\}
	\subset
	\overline{\mathrm B}=\left\{
	\overline{
	\begin{bmatrix}
		a & b\\
		0 & 1
	\end{bmatrix}}\colon a\in\kk^*,b\in\kk
	\right\}
	\]
	The elements of $\overline{\mathrm{B}}$ can be viewed as affine automorphisms of $\PP^1$, locally given by $z\mapsto az+b$. The map which sends every such automorphism to $a\in\kk^*$ is a well-defined group homomorphism, which induces a short exact sequence
	\begin{equation}\label{eq: ses for Borel and Unipotent subgroups}\begin{tikzcd}
		1 
		\ar{r}
		& 
		\overline{\mathrm U}
		\ar{r}
		& 
		\overline{\mathrm B}
		\ar{r}
		& 
		\kk^*
		\ar{r}
		& 
		1.
	\end{tikzcd}
	\end{equation}
	In the case (4), letting $\overline{\Dih}_\infty=\pi(\Dih_\infty)$, we observe that the diagonal matrices $\diag\{a,a^{-1}\}$, where $a\in\kk^*$, constitute the connected component of the identity $\Dih_\infty^\circ$ and are mapped  by $\pi$ to a group isomorphic to $\kk^*$. The other matrices are mapped to involutions. Furthermore, one has $\overline{\Dih}_\infty\simeq\kk^*\rtimes\Cyc_2$, where the action is by inversion: $t\mapsto t^{-1}$, i.e. $\overline{\Dih}_\infty\simeq\Hol(\kk^*)$. 
\end{remark}

Let us fix the following presentations \cite[\S 19]{Huppert}:

\begin{itemize}
	\item[---] $\Cyc_n\simeq \langle r\ |\ r^n=\id\rangle$.
	\item[---] $\Dih_n\simeq \langle r,b\ |\ r^n=b^2=(rb)^2=\id\rangle$.
	\item[---] $\Alt_4\simeq\langle a,b,c\ |\ a^2=b^2=c^3=\id,\ cac^{-1}=ab=ba,\ cbc^{-1}=a \rangle $. Note that $\langle a,b\rangle\simeq\Klein_4$ is the derived subgroup of $\Alt_4$ and one has $\Alt_4\simeq \langle a,b\rangle\rtimes\langle c\rangle$.
	\item[---] $\Sym_4\simeq\langle a,b,c,d\ |\ a^2=b^2=c^3=d^2=\id,\ cac^{-1}=dad=ab=ba,\ cbc^{-1}=a,\ bd=db,\ dcd=c^{-1} \rangle $. Here, one finds a unique copy of $\Alt_4\simeq \langle a,b,c\rangle $ inside, which is the derived subgroup of $\Sym_4$, and one has $\Sym_4\simeq\langle a,b,c\rangle\rtimes\langle d\rangle$.
	\item[---] $\Alt_5\simeq\langle e,f\ |\  e^5=f^2=(ef)^3=\id\rangle$.
\end{itemize}

\begin{notation}\label{notation: matrices}
Let us fix the following identifications (see e.g. \cite[Theorem C]{Faber}):
\rowcolors{1}{}{}
\[
r\mapsto \R_n=
\begin{bmatrix}
	1 & 0\\
	0 & \omega_n
\end{bmatrix},\ 
a\mapsto \A=
\begin{bmatrix}
	1 & 0\\
	0 & -1
\end{bmatrix},\ 
b\mapsto \B=
\begin{bmatrix}
	0 & 1\\
	1 & 0
\end{bmatrix},\ 
c\mapsto \C=
\begin{bmatrix}
	i & -i\\
	1 & 1
\end{bmatrix},\ 
\]
\[
d\mapsto 
\D=\begin{bmatrix}
	1 & -i\\
	i & -1
\end{bmatrix},\ 
e\mapsto 
\E=\begin{bmatrix}
	\omega_5 & 0\\
	0 & 1
\end{bmatrix},\ 
f\mapsto 
\F=
\begin{bmatrix}
	1 & 1-\omega_5-\omega_5^{-1}\\
	1 & -1
\end{bmatrix}.
\]
Then these identifications define embeddings of $\Cyc_n$, $\Dih_n$, $\Alt_4$, $\Sym_4$ and $\Alt_5$ into $\PGL_2(\kk)$.
\end{notation}

The following description of orbits on $\PP^1$ is classical:

\begin{proposition}[{\cite[4.4]{SpringerInvariantTheory}}]\label{prop: Klein orbits}
	Let $G$ be a finite subgroup of $\PGL_2(\kk)$. One has the following.
	\begin{enumerate}
		\item If $G\simeq\Cyc_n$ then it has $2$ fixed points on $\PP^1$, namely $[1:0]$ and $[0:1]$, and any other point generates an orbit of length $n$.
		\item If $G\simeq\Dih_n$, then it has one orbit of length $2$, namely $[1:0]$ and $[0:1]$, and one orbit of length $n$ generated by $[1:1]$. Any other point generates an orbit of length $2n$.
		\item If $G\simeq\Alt_4$ then it has two orbits of length $4$ and one orbit of length $6$. All other orbits are of length $12$.
		\item If $G\simeq\Sym_4$ then it has one orbit of length $6$, one orbit	of length $8$ and one orbit of length $12$. All other orbits are of length $24$.
		\item If $G\simeq\Alt_5$ then it has one orbit of length $12$, one orbit of length $20$, and one orbit of length $30$. All other orbits are of length $60$.
	\end{enumerate}
\end{proposition}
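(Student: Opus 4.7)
The plan is to split the five cases into two groups: parts (1) and (2) follow by a direct calculation with the matrices $\R_n,\B$ of Notation~\ref{notation: matrices}, while parts (3)--(5) are cleanest via the Riemann--Hurwitz formula applied to the quotient morphism $\pi\colon\PP^1\to\PP^1/G\simeq\PP^1$.

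For (1), the generator $\R_n=\diag(1,\omega_n)$ and all of its nontrivial powers share exactly the two eigenvectors $[1:0]$ and $[0:1]$; the orbit-stabilizer theorem then forces every other point to have trivial stabilizer, hence orbit of length $n$. For (2), the involution $\B$ swaps $[1:0]\leftrightarrow[0:1]$, producing one orbit of length $2$; $\B$ also fixes $[1:1]$, so the $\Dih_n$-orbit of $[1:1]$ coincides with the $\Cyc_n$-orbit $\{[1:\omega_n^k]\}_{k=0}^{n-1}$ and has length $n$. For a point $p$ outside these two orbits, any nontrivial stabilizer element of $p$ would be either a power of $\R_n$ (whose fixed locus is $\{[1:0],[0:1]\}$) or a reflection $\B\R_n^k$ (whose fixed locus lies on the $[1:1]$-orbit), a contradiction; hence $p$ has trivial stabilizer, giving an orbit of length $2n$.

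For (3)--(5), the map $\pi$ has degree $|G|$ and is ramified precisely over the images of points with nontrivial (necessarily cyclic) stabilizer, the stabilizer orders being constant on each $G$-orbit. Letting $e_1,\dots,e_m>1$ denote these orders on the exceptional orbits, Riemann--Hurwitz gives
\[
2-\tfrac{2}{|G|}=\sum_{i=1}^m\Bigl(1-\tfrac{1}{e_i}\Bigr),
\]
with each $e_i$ an order of an element of $G$. For $\Alt_4$, $\Sym_4$, $\Alt_5$ (so $|G|=12,24,60$, with admissible element orders $\{2,3\}$, $\{2,3,4\}$, $\{2,3,5\}$), a short case analysis shows the unique solutions are $(e_1,\dots,e_m)=(2,3,3),\ (2,3,4),\ (2,3,5)$, yielding exceptional orbit sizes $|G|/e_i=(6,4,4),\ (12,8,6),\ (30,20,12)$. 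Every remaining point has trivial stabilizer, so its orbit has length $|G|$.

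The only real work is establishing that each admissible triple is actually realised (existence, not merely numerical feasibility). For this I would exhibit fixed points of the matrices $\C$ (of order $3$), $\D$ (of order $2$), and $\F$ (of order $2$) from Notation~\ref{notation: matrices} and compute their $G$-orbits directly; equivalently, identify each polyhedral group with the symmetry group of a platonic solid inscribed in $S^2\simeq\PP^1(\CC)$, so that the vertices, edge-midpoints, and face-centres of the solid furnish the three exceptional orbits of the predicted sizes.
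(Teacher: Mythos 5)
The paper offers no proof of this proposition at all --- it is quoted from Springer's \emph{Invariant Theory} --- so there is no argument to compare yours against; I can only assess it on its own terms. Parts (1) and (3)--(5) are correct. For (3)--(5), note that the Riemann--Hurwitz computation already settles existence: the actual branch data of $\pi$ must satisfy your displayed equation with each $e_i$ the order of a (cyclic) stabilizer, and since the only admissible solutions are $(2,3,3)$, $(2,3,4)$, $(2,3,5)$, the exceptional orbits of the stated sizes necessarily occur. The closing paragraph's worry about ``realisability'' is therefore unnecessary, though exhibiting fixed points of $\C$, $\D$, $\F$ does no harm.

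Part (2), however, contains a false step. A fixed point $[x:y]$ of the reflection $\B\R_n^k\colon [x:y]\mapsto[\omega_n^k y:x]$ satisfies $x^2=\omega_n^k y^2$, so as $k$ ranges over $0,\dots,n-1$ the fixed loci of the $n$ reflections sweep out exactly the $2n$ points $[\zeta:1]$ with $\zeta^{2n}=1$. Only the $n$ of these with $\zeta^{n}=1$ lie in the $\Dih_n$-orbit of $[1:1]$; the other $n$ (those with $\zeta^{n}=-1$) form a \emph{second} exceptional orbit of length $n$. Concretely, for $n$ odd the point $[1:-1]$ is fixed by $\B$ but is not of the form $[1:\omega_n^j]$, so its orbit has length $n$, not $2n$. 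Hence your assertion that the fixed locus of every reflection lies on the $[1:1]$-orbit is wrong, and with it the conclusion that every point outside the two listed orbits has trivial stabilizer. Indeed the literal statement of item (2) is itself inaccurate: for $n\geqslant 2$ the group $\Dih_n$ has \emph{three} exceptional orbits, of lengths $2$, $n$ and $n$ --- exactly what your own Riemann--Hurwitz method predicts, since the branch data of $\PP^1\to\PP^1/\Dih_n$ is $(2,2,n)$. (This slip is harmless for the rest of the paper, which only uses the existence of orbits of lengths $2$ and $n$ and the parity of orbit lengths, but your proof should record the third orbit rather than argue it away.)
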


\subsection{Blichfeldt's classification}\label{subsec: PGL3}

Since we are interested in linearization of finite subgroups of $\Cr_2(\kk)$, let us first recall the classification of finite subgroups of $\PGL_3(\kk)$, which is essentially due to H. Blichfeldt. 

\begin{definition}[{\cite{Blichfeldt}}]\label{def: Blichfeldt}
	We call a subgroup $\iota\colon G\hookrightarrow\GL_n(\kk)$ {\it intransitive} if the representation $\iota$ is reducible, and {\it transitive} otherwise. Further, a transitive group $G$ is called {\it imprimitive} if there is a decomposition $\kk^n=\bigoplus_{i=1}^mV_i$ into a direct sum of subspaces and $G$ transitively acts on the set $\{V_i\}$. A transitive group $G$ is called {\it primitive} if there is no such decomposition. Finally, we say that $G\subset\PGL_n(\kk)$ is (in)transitive or (im)primitive if its lift to $\GL_n(\kk)$ is such a group.
\end{definition}

So, intransitive subgroups of $\PGL_3(\kk)$ come in two types:
\begin{enumerate}
	\item[$I_{1}$] The representation of $G$ in $\GL_3(\kk)$ is a direct sum of three 1-dimensional representations. In other words, $G$ fixes 3 non-collinear points on $\PP^2$ and hence $G\simeq\Cyc_n\times\Cyc_m$, where $n,m\geqslant 1$, is a diagonal abelian group. 
	\item[$I_2$] The representation of $G$ in $\GL_3(\kk)$ is a direct sum of 1-dimensional and 2-dimensional representations. In other words, $G$ fixes a point $p\in\PP^2$, and hence there is an embedding $G\hookrightarrow \GL(T_p\PP^2)\simeq\GL_2(\kk)$. Obviously, every finite subgroup of $\GL_2(\kk)$ gives rise to an intransitive subgroup of $\PGL_3(\kk)$.  
\end{enumerate}

Assume that $G$ is imprimitive. Then $\kk^3=V_1\oplus V_2\oplus V_3$, where $V_1\simeq V_2\simeq V_3\simeq\kk$ and $G$ acts transitively on the set $\{V_1,V_2,V_3\}$. Hence $G$ fits into the short exact sequence
\[\begin{tikzcd}
	1 
	\ar{r}
	& 
	A
	\ar{r}
	& 
	G
	\ar{r}
	& 
	B
	\ar{r}
	& 
	1
\end{tikzcd}
\]
where $B\simeq\Cyc_3$ or $B\simeq\Sym_3$, and $A\subset\PGL_3(\kk)$ is an intransitive subgroup of type $I_1$.

Finally, there are 6 primitive subgroups of $\PGL_3(\kk)$.

\begingroup
\renewcommand*{\arraystretch}{1.2}
\begin{longtable}{|c|c|c|c|}
	\hline
	GAP & Order & Isomorphism class & Comments \\ \hline
	
	\rowcolor{light-green}
	\multicolumn{4}{|c|}{Primitive subgroups} \\ \hline

	$[36,9]$ & 36 & $(\Cyc_3\times \Cyc_3)\rtimes\Cyc_4$ & subgroup of the Hessian group  \\ \hline
	\rowcolor{light-green}
	$[72,41]$ & 72 & $(\Cyc_3\times \Cyc_3)\rtimes\Quat_8$ & subgroup of the Hessian group \\ \hline
	$[216, 153]$ & 216 & $(\Cyc_3\times \Cyc_3)\rtimes\SL_2(\FF_3)$ & the Hessian group \\ \hline
	\rowcolor{light-green}
	$[60,5]$ & 60 & $\Alt_5$ & the simple icosahedral group \\ \hline
	$[168,42]$ & 168 & $\PSL_2(\FF_7)$ & the simple Klein group \\ \hline
	\rowcolor{light-green}
	$[360,118]$ & 360 & $\Alt_6$ & the simple Valentiner group \\ \hline
\end{longtable}
\endgroup

\subsection{Goursat's lemma}\label{subsec: Goursat}

Let $\Gamma_1$ and $\Gamma_2$ be two finite groups. Finite subgroups of the direct product $\Gamma_1\times\Gamma_2$ can be determined using the classical \emph{Goursat's lemma}. Recall that the \emph{fibre product} of two groups $G_1$ and $G_2$ over a group $Q$ is defined as
\[
G_1\times_{Q} G_2=\{(g_1,g_2)\in G_1\times G_2\colon \alpha(g_1)=\beta(g_2)\},
\] 
where $\alpha\colon G_1\to Q$ and $\beta\colon G_2\to Q$ are surjective homomorphisms. Note that the data defining $G_1\times_Q G_2$ is not only
the groups $G_1$, $G_2$ and $Q$ but also the homomorphisms $\alpha$, $\beta$.

\begin{lemma}[{{Goursat's lemma}, \cite[p. 47]{Goursat}}]\label{lem: Goursat}
	Let $\Gamma_1$ and $\Gamma_2$ be two finite groups. There is a bijective correspondence between subgroups $G\subseteq \Gamma_1\times \Gamma_2$ and $5$-tuples $\{G_1,G_2,H_1,H_2,\varphi\}$, where $G_1$ is a subgroup of $\Gamma_1$, $H_1$ is a normal subgroup of $G_1$, $G_2$ is a subgroup of $\Gamma_2$, $H_2$ is a normal subgroup of $G_2$, and $\varphi\colon G_1/H_1\iso G_2/H_2$ is an isomorphism. Namely, the group corresponding to this $5$-tuple is
	\[
	G=\{(g_1,g_2)\in G_1\times G_2\colon \varphi(g_1H_1)=g_2H_2\}.
	\]
	Conversely, let $G\subseteq\Gamma_1\times \Gamma_2$ be a subgroup. Denote by $p_1\colon \Gamma_1\times \Gamma_2\to \Gamma_1$ and $p_2\colon \Gamma_1\times \Gamma_2\to \Gamma_2$ the natural projections, and set $G_1=p_1(G)$, $G_2=p_2(G)$. Set
	\[
	H_1=\ker p_2|_G=\{(g_1,\id)\in G,\ g_1\in \Gamma_1\},\ \ H_2=\ker p_1|_G=\{(\id,g_2)\in G,\ g_2\in \Gamma_2\},
	\]
	whose images by $p_1$ and $p_2$ define normal subgroups of $G_1$ and $G_2$, respectively $($denoted the same$)$. Let $\pi_1\colon G_1\to G_1/H_1$ and $\pi_2\colon G_2\to G_2/H_2$ be the quotient homomorphisms. The map $\varphi\colon G_1/H_1\to G_2/H_2$, $\varphi(g_1H_1)=g_2H_2$, where $g_2\in \Gamma_2$ is any element such that $(g_1,g_2)\in G$, is an isomorphism. Furthermore, $G=G_1\times_Q G_2$, where $Q=G_1/H_1$, $\alpha=\pi_1$, and $\beta=\varphi^{-1}\circ\pi_2$.
	
\end{lemma}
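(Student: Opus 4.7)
The plan is to establish the correspondence in both directions and verify they are mutually inverse. The proof is essentially routine diagram-chasing, so I expect the main obstacle to be simply organizing the notation cleanly, rather than any clever idea.

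\emph{From a 5-tuple to a subgroup.} Given $\{G_1,G_2,H_1,H_2,\varphi\}$ as in the statement, I define
\[
G=\{(g_1,g_2)\in G_1\times G_2\colon \varphi(g_1H_1)=g_2H_2\}.
\]
To see $G$ is a subgroup of $\Gamma_1\times\Gamma_2$, observe that closure under multiplication follows because $\varphi$ is a homomorphism and because $H_i\triangleleft G_i$ ensures $(g_1g_1')H_1=g_1H_1\cdot g_1'H_1$; closure under inverses is analogous. Identifying this with the fibre product requires only noting that $\alpha=\pi_1$ and $\beta=\varphi^{-1}\circ\pi_2$ are surjective onto $Q=G_1/H_1$, and that $(g_1,g_2)\in G$ iff $\alpha(g_1)=\beta(g_2)$.

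\emph{From a subgroup to a 5-tuple.} Given $G\subseteq\Gamma_1\times\Gamma_2$, I set $G_i=p_i(G)$ and define $H_1, H_2$ as in the statement. First, I check $H_1\triangleleft G_1$: if $(h,\id)\in G$ and $g_1\in G_1$ with some lift $(g_1,g_2)\in G$, then conjugation gives $(g_1 h g_1^{-1},\id)\in G$, so $g_1 h g_1^{-1}\in H_1$; normality of $H_2$ is symmetric. Next, the map $\varphi\colon G_1/H_1\to G_2/H_2$ defined by $\varphi(g_1H_1)=g_2H_2$ (for any choice of $g_2$ with $(g_1,g_2)\in G$) is well-defined: if $(g_1,g_2),(g_1,g_2')\in G$, then $(\id,g_2^{-1}g_2')\in G$, so $g_2^{-1}g_2'\in H_2$. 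It is a homomorphism by definition, surjective because every element of $G_2$ occurs as a second coordinate, and injective because $\varphi(g_1H_1)=H_2$ means some $(g_1,g_2)\in G$ has $g_2\in H_2$, whence $(g_1,\id)=(g_1,g_2)(\id,g_2^{-1})\in G$, so $g_1\in H_1$.

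\emph{The two constructions are inverse.} Starting from a subgroup $G$, forming the 5-tuple, and then reconstructing the subgroup clearly recovers $G$, since $(g_1,g_2)\in G$ iff $\varphi(g_1H_1)=g_2H_2$. Conversely, starting from a 5-tuple $\{G_1,G_2,H_1,H_2,\varphi\}$, constructing $G$ and then extracting the tuple $\{G_1',G_2',H_1',H_2',\varphi'\}$: one easily checks $G_i'=G_i$ (every $g_1\in G_1$ has a lift via any representative of $\varphi(g_1H_1)$), $H_i'=H_i$ (since $(g_1,\id)\in G$ iff $\varphi(g_1H_1)=H_2$ iff $g_1\in H_1$), and $\varphi'=\varphi$ by construction.

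The hard part, if there is one, is the well-definedness and injectivity of $\varphi$ in the second direction, since both rely on extracting elements of $H_2$ from pairs in $G$; but once the kernels $H_i$ are properly identified with subsets of $\Gamma_i$ via the projections, these steps are immediate. No deeper structural input is needed.
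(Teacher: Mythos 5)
Your proof is correct and complete: the verification that $\varphi$ is well defined, bijective, and that the two constructions are mutually inverse is exactly the standard argument. The paper itself gives no proof of this statement --- it is quoted as a classical result with a citation to Goursat --- so there is nothing to compare against beyond noting that your write-up is the usual textbook verification and contains no gaps.
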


\begin{lemma}\label{lem: Goursat exact sequence}
	In the above notation, any subgroup $G\subseteq \Gamma_1\times \Gamma_2$ fits into the short exact sequence
	\[\begin{tikzcd}
		1 
		\ar{r}
		& 
		H_1\times H_2
		\ar{r}
		& 
		G
		\ar{r}
		& 
		Q
		\ar{r}
		& 
		1
	\end{tikzcd}
	\]
\end{lemma}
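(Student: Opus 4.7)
The plan is to build the surjection $G\twoheadrightarrow Q$ directly from the restriction $p_1|_G$ followed by the quotient map $\pi_1\colon G_1\twoheadrightarrow G_1/H_1=Q$, and then to show that its kernel coincides with the subgroup $H_1\times H_2\subset G$. Both assertions turn out to be formal consequences of the description of the data $(G_1,G_2,H_1,H_2,\varphi)$ furnished by Goursat's lemma.

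First I would verify that $H_1\times H_2$ is actually a subgroup of $G$. From the very definition of $H_1$ and $H_2$ one has $(h_1,\id)\in G$ for every $h_1\in H_1$ and $(\id,h_2)\in G$ for every $h_2\in H_2$, so the product $(h_1,h_2)=(h_1,\id)(\id,h_2)$ also lies in $G$. Normality of $H_1\times H_2$ in $G$ follows coordinate-wise: conjugating $(h_1,h_2)$ by $(g_1,g_2)\in G$ produces $(g_1h_1g_1^{-1},\,g_2h_2g_2^{-1})$, which belongs to $H_1\times H_2$ because $H_j\triangleleft G_j$ and $g_j\in G_j$ for $j=1,2$.

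Next I would consider the composite homomorphism
\[
\Phi\colon G\xrightarrow{\,p_1|_G\,}G_1\xrightarrow{\,\pi_1\,}G_1/H_1=Q.
\]
Surjectivity of $\Phi$ is immediate since both arrows are surjective. To compute $\ker\Phi$, note that $(g_1,g_2)\in\ker\Phi$ precisely when $g_1\in H_1$, equivalently when $(g_1,\id)\in G$; in that case $(\id,g_2)=(g_1,\id)^{-1}(g_1,g_2)\in G$, which forces $g_2\in H_2$. Conversely, every element of $H_1\times H_2$ is clearly in $\ker\Phi$. Hence $\ker\Phi=H_1\times H_2$, producing the required short exact sequence.

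No step of the argument looks like an obstacle: everything is formal once Goursat's lemma is granted. The only point worth flagging is that the identification $Q=G_1/H_1\simeq G_2/H_2$ is symmetric in the two factors via $\varphi$, so the same exact sequence could equally well be extracted by starting from $p_2|_G$ and composing with $\varphi^{-1}\circ\pi_2$; either description yields the same kernel $H_1\times H_2$ and the same quotient $Q$.
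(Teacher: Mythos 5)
Your proof is correct and is essentially the paper's argument in expanded form: the paper restricts $\alpha\times\beta$ to $G$, whose image is the diagonal of $Q\times Q$ (isomorphic to $Q$ via either projection) and whose kernel is $H_1\times H_2$, which is exactly your map $\Phi=\pi_1\circ p_1|_G$ after identifying the diagonal with $Q$. The kernel computation via $(\id,g_2)=(g_1,\id)^{-1}(g_1,g_2)$ is the right mechanism and matches what the paper leaves implicit.
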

\begin{proof}
	Indeed, the restriction of the homomorphism $\alpha\times\beta\colon G_1\times G_2\to Q\times Q$ to $G$ has the kernel $H_1\times H_2$ and the image is isomorphic to $\{(t,t)\in Q\times Q\}\simeq Q$.
\end{proof}

\begin{remark}\label{Generators of a fibre product}
	 We will often use explicit generators to describe a fibre product. They are given as follows. Let $S$ be a subset of $G_1$ such that $\alpha(S)=Q$. Then the group $G_1\times_QG_2$ is generated by $\ker\alpha\times\ker\beta$, and $\{(g_1,g_2),g_1\in S,\alpha(g_1)=\beta(g_2)\}$.
\end{remark}

\subsection{Actions on sets} We will use several elementary lemmas about group actions on a set.

\begin{lemma}\label{lem: orbits of a direct product}
	Let $A$ and $B$ be two finite groups, and $G\subset A\times B$ be a subgroup such that the natural projection $G\to A$ is surjective. Assume that $A$ acts on a set $X$ and $B$ acts on a set $Y$. Consider the induced action of $G$ on $X\times Y$. Then the length of any orbit of $G$ in $X\times Y$ is divisible by the length of some orbit of $A$ on $X$.
\end{lemma}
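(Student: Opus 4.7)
The plan is to project a $G$-orbit in $X\times Y$ down to $X$ and argue that the projection is a full $A$-orbit whose length divides the length of the original orbit. Fix a point $(x_0,y_0)\in X\times Y$, let $O$ be its $G$-orbit, and consider the restriction to $O$ of the natural projection $\pi_X\colon X\times Y\to X$.

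First, I would establish that $\pi_X(O)$ is precisely the $A$-orbit of $x_0$. On one hand, for any $(a,b)\in G$ and any $(x,y)\in O$ we have $\pi_X\bigl((a,b)\cdot(x,y)\bigr)=a\cdot x$, so $\pi_X(O)$ is contained in a single $A$-orbit (namely the one through $x_0$), because $G$ is transitive on $O$. On the other hand, the hypothesis that $G\twoheadrightarrow A$ is surjective is used here: for every $a\in A$ there exists $b\in B$ with $(a,b)\in G$, and then $\pi_X\bigl((a,b)\cdot(x_0,y_0)\bigr)=a\cdot x_0\in \pi_X(O)$, so the whole $A$-orbit of $x_0$ lies in $\pi_X(O)$. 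Combining the two inclusions, $\pi_X(O)=A\cdot x_0$.

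Next, I would observe that the map $\pi_X|_O\colon O\to\pi_X(O)$ is equivariant with respect to the $G$-action on $O$ and the $A$-action on $\pi_X(O)$ pulled back via $G\to A$. Both actions are transitive, hence all fibres of $\pi_X|_O$ have the same cardinality, say $m$. Therefore
\[
|O|=m\cdot |\pi_X(O)|=m\cdot |A\cdot x_0|,
\]
which exhibits the length of the $A$-orbit of $x_0$ as a divisor of $|O|$, as required.

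There is essentially no technical obstacle; the only subtle point is that the surjectivity hypothesis $G\twoheadrightarrow A$ is exactly what forces $\pi_X(O)$ to coincide with a full $A$-orbit (rather than a proper subset of one), and this is the crucial step on which the divisibility rests.
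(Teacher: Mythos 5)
Your proposal is correct and follows essentially the same route as the paper: both project the $G$-orbit to $X$, use the surjectivity of $G\to A$ to identify the image with a full $A$-orbit, and then show all fibres of the projection have equal cardinality (the paper spells out the fibre bijections via explicit lifts, which is exactly the standard equivariance fact you invoke). No gaps.
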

\begin{proof}
	Pick a point $(x_0,y_0)\in X\times Y$ and consider its $G$-orbit $\Sigma=\{(ax_0,by_0)\colon (a,b)\in G\}$. Consider the projection $p_X\colon X\times Y\to X$. We claim that the set $\Sigma_X=p_X(\Sigma)\subset X$ is an $A$-orbit. Firstly, it is clearly $A$-invariant: taking any $x\in \Sigma_X$ there exists $y\in Y$ such that $(x,y)\in\Sigma$. By our assumption, for any $a\in A$ there exists $b\in B$ such that $(a,b)\in G$. Then $(ax,by)\in\Sigma$ and therefore $ax\in\Sigma_X$. Secondly, $\Sigma_X$ is clearly a minimal $A$-invariant set. For if there is a proper $A$-invariant subset $\Sigma_X'\subsetneq\Sigma_X$, then taking two elements $x'\in\Sigma'_X$, $x\in\Sigma_X\setminus\Sigma_X'$ and their lifts $(x',y')\in\Sigma$, $(x,y)\in\Sigma$, there exists an element $(a,b)\in G$ such that $(ax',by')=(x,y)$. In particular, $x=ax'\notin\Sigma_X'$; but this contradicts to $\Sigma_X'$ being $A$-invariant.
	
	It remains to show that for each $x_1\in\Sigma_X$, the cardinality of the fibre $p_X^{-1}(x_1)\cap\Sigma$ is the same and equals to the cardinality of $p_X^{-1}(x_0)\cap\Sigma$. Indeed, there exists $a\in A$ such that $ax_0=x_1$. Pick any lift $(a,b)\in G$ and consider the map of finite sets
	\[
	p_X^{-1}(x_0)\cap\Sigma\to p_X^{-1}(x_1)\cap\Sigma,\ (x_0,y)\mapsto (ax_0,by)=(x_1,by),
	\]
	which is obviously well defined and injective. The same holds for the map $p_X^{-1}(x_1)\cap\Sigma\to p_X^{-1}(x_0)\cap\Sigma$ which sends $(x_1,y)$ to $(a^{-1}x_1,b^{-1}y)=(x_0,b^{-1}y)$. This finishes the proof.
\end{proof}

\begin{lemma}\label{lem: orbits of subgroups}
	Let $G$ be a finite group and $H\subset G$ be a normal subgroup of finite index $[G:H]$. Assume that $G$ acts on a set $X$ and let $\Sigma$ be an orbit of $G$. Write $\Sigma=\Sigma_1\sqcup\ldots\sqcup\Sigma_n$ as the disjoint union of $H$-orbits $\Sigma_i$. Then all $\Sigma_i$ have the same length and $n$ divides $[G:H]$. In particular, the length of $\Sigma$ is always divisible by the length of some orbit of $H$. 
\end{lemma}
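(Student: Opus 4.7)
The plan is to exploit the normality of $H$ in $G$ to make $G$ act on the set of $H$-orbits $\{\Sigma_1,\ldots,\Sigma_n\}$, and then deduce both claims from the fact that this action is transitive with $H$ in the kernel.

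First, I would verify that $G$ permutes the $H$-orbits. Given $g\in G$ and an $H$-orbit $\Sigma_i\subseteq\Sigma$, the set $g\Sigma_i$ is again an $H$-orbit: for any $h\in H$, normality gives $h\cdot (g\sigma)=g\cdot(g^{-1}hg)\sigma\in g\Sigma_i$, so $g\Sigma_i$ is $H$-stable; and since $H$ acts transitively on $\Sigma_i$, it acts transitively on $g\Sigma_i$. Thus $g\Sigma_i$ is one of the $\Sigma_j$.

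Second, I would show this induced action of $G$ on $\{\Sigma_1,\ldots,\Sigma_n\}$ is transitive. Pick $\sigma_i\in\Sigma_i$ and $\sigma_j\in\Sigma_j$. Since $\Sigma$ is a single $G$-orbit, there exists $g\in G$ with $g\sigma_i=\sigma_j$, and then $g\Sigma_i$ is the $H$-orbit of $\sigma_j$, namely $\Sigma_j$. In particular, $g$ restricts to a bijection $\Sigma_i\to\Sigma_j$, which immediately gives $|\Sigma_i|=|\Sigma_j|$; so all $\Sigma_i$ have the same length.

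Finally, the stabilizer in $G$ of the element $\Sigma_1$ (in the action on $\{\Sigma_1,\ldots,\Sigma_n\}$) contains $H$, since each $\Sigma_i$ is $H$-stable by construction. Hence the action factors through $G/H$, and $n=|G\cdot\Sigma_1|$ divides $|G/H|=[G:H]$, as desired. The last sentence of the statement is then immediate: $|\Sigma|=n|\Sigma_1|$ is divisible by $|\Sigma_1|$. There is no serious obstacle here; the only point requiring care is the verification that $g\Sigma_i$ remains an $H$-orbit, which is exactly where normality of $H$ enters.
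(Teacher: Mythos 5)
Your proposal is correct and follows essentially the same route as the paper: the paper likewise lets $G/H$ act transitively on the set of $H$-orbits, applies the orbit--stabilizer theorem to get $n\mid[G:H]$, and uses translation by a suitable $g\in G$ to obtain the bijection $\Sigma_i\to\Sigma_j$. Your explicit verification that $g\Sigma_i$ is again an $H$-orbit (via normality) is a detail the paper leaves implicit, but the argument is the same.
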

\begin{proof}
Indeed, the quotient group $G/H$ acts on the space of orbits $\Sigma/H$, and this action is transitive. Therefore, by the Orbit-Stabilizer Theorem, $[G:H]$ is divisible by the cardinality of $\Sigma/H$, i.e. by $n$. Consider any two $H$-orbits $\Sigma_i$ and $\Sigma_j$, let $x_i\in\Sigma_i$ and $x_j\in\Sigma_j$, so that $\Sigma_i=Hx_i$ and $\Sigma_j=Hx_j$. By transitivity of $G$ on $\Sigma$, there exists $g\in G$ such that $gx_i=x_j$. Then the map $\Sigma_i\to\Sigma_j$, $y\mapsto gy$ is a bijection.
\end{proof}

\subsection{Generalized dihedral groups} We remind the following elementary fact.

\begin{lemma}\label{lem: dihedral group subgroups}
	Subgroups of the dihedral group $\Dih_n=\langle r,s\ |\ r^n=s^2=(sr)^2=\id\rangle$ are the following:
	\begin{description}
		\item[\it Cyclic] $\langle r^d\rangle\simeq\Cyc_{n/d}$, and $\langle r^ks\rangle$, where $d$ divides $n$ and $0\leqslant k\leqslant n-1$.
		\item[\it Dihedral] $\langle r^d,r^ks\rangle\simeq\Dih_{n/d}$, where $d<n$ divides $n$, and $0\leqslant k\leqslant d-1$. 
	\end{description}
	All cyclic subgroups $\langle r^d\rangle$ are normal, one has $\Dih_n/\langle r^d\rangle\simeq\Dih_{d}$, and these are all normal subgroups when $n$ is odd. When $n$ is even, there are two additional normal dihedral subgroups of index $2$, namely $\langle r^2,s\rangle$ and $\langle r^2,rs\rangle$.
\end{lemma}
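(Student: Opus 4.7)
The plan is to classify subgroups $H\subset\Dih_n$ by case analysis based on whether $H$ contains any reflection (an element of the form $r^ks$). The key computational facts, immediate from the defining relations $s^2=r^n=(sr)^2=\id$, are that every element of $\Dih_n$ is uniquely of the form $r^k$ or $r^ks$ with $0\leqslant k\leqslant n-1$, that each $r^ks$ is an involution (so $\langle r^ks\rangle\simeq\Cyc_2$), and that $sr^ds^{-1}=r^{-d}$.

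If $H$ contains no reflection, then $H\subseteq\langle r\rangle\simeq\Cyc_n$, and the standard classification of subgroups of a cyclic group gives $H=\langle r^d\rangle$ for a unique divisor $d$ of $n$. If $H$ contains a reflection, then $H\cap\langle r\rangle=\langle r^d\rangle$ for some $d\mid n$. The degenerate case $d=n$ (trivial intersection) produces $H=\langle r^ks\rangle$, a cyclic subgroup of order $2$. For $d<n$, I would choose $k$ minimal with $r^ks\in H$; the range $0\leqslant k\leqslant d-1$ is forced, because $k\geqslant d$ would give $r^{k-d}s=r^{-d}\cdot r^ks\in H$, contradicting minimality. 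Then $H=\langle r^d,r^ks\rangle$, and the relations $(r^ks)^2=\id$ and $(r^ks)\,r^d\,(r^ks)^{-1}=r^{-d}$ supply an explicit isomorphism with $\Dih_{n/d}$.

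For the normality statements, each $\langle r^d\rangle$ is normal because $s$ inverts $r^d$ and $r$ commutes with $r^d$; reading off generators and relations then gives $\Dih_n/\langle r^d\rangle\simeq\Dih_d$. For a dihedral subgroup $H=\langle r^d,r^ks\rangle$ with $d<n$, I would compute $r(r^ks)r^{-1}=r^{k+2}s$ and conclude that $H$ is $r$-invariant, hence normal, exactly when $r^2\in\langle r^d\rangle$, i.e.\ when $d\mid 2$. This leaves $d=1$ (the whole group) and, \emph{only when $n$ is even}, the case $d=2$ producing the two index-$2$ normal dihedral subgroups $\langle r^2,s\rangle$ and $\langle r^2,rs\rangle$ corresponding to the two admissible values $k=0,1$. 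A symmetric check shows the order-$2$ cyclic subgroups $\langle r^ks\rangle$ are never normal for $n\geqslant 3$, since conjugation by $r$ sends them to $\langle r^{k+2}s\rangle\neq \langle r^ks\rangle$. The argument is entirely elementary; the only mild obstacle is bookkeeping — keeping $d$ and $k$ in their canonical ranges $d\mid n$ and $0\leqslant k\leqslant d-1$ so that the enumeration contains no duplicates — since every step reduces to a direct computation in the dihedral presentation.
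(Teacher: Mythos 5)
Your proof is correct. The paper states this lemma as a reminder of an elementary fact and gives no proof of its own, so there is nothing to compare against; your case analysis (subgroups with no reflection versus subgroups containing a reflection, with $k$ chosen minimal, followed by the conjugation computation $r(r^ks)r^{-1}=r^{k+2}s$ for the normality claims) is exactly the standard argument one would expect. The only step worth making explicit is why $r$-invariance alone implies normality of $\langle r^d,r^ks\rangle$: since this subgroup contains a reflection, it generates all of $\Dih_n$ together with $r$, so its normalizer, once it contains $r$, is the whole group.
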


Let $A$ be an abelian group. Recall that the \emph{generalized dihedral group} $\Dih(A)$ is the semi-direct product $A\rtimes\Cyc_2$, where $\Cyc_2$ acts by inverting the elements of $A$. In particular, for $A\simeq\Cyc_n$ one has $\Dih(\Cyc_n)\simeq\Dih_n$.

\begin{lemma}\label{lem: subgroups of Dn x Dm}
	Let $n,m\geqslant 3$ be odd integers. Then one has the following:
	\begin{enumerate}
		\item Any group $G=\Dih_n\times_Q\Dih_m$ is either $\Dih_n\times\Dih_m$, or a generalized dihedral group $\Dih(A)$, where $A$ is a direct product of at most two cyclic groups.
		\item Furthermore, $G\simeq\Dih(A)$ admits a faithful 2-dimensional representation if and only if $A$ is cyclic, so that $G$ is isomorphic to a dihedral group.
	\end{enumerate}
\end{lemma}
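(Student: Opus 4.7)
The plan is to combine Goursat's lemma (Lemma~\ref{lem: Goursat}) with the classification of normal subgroups of dihedral groups from Lemma~\ref{lem: dihedral group subgroups}. Since $n$ is odd, the normal subgroups of $\Dih_n=\langle r,s\rangle$ are exactly the cyclic $\langle r^d\rangle\simeq\Cyc_{n/d}$ for $d\mid n$, together with $\Dih_n$ itself, with quotients $\Dih_d$ and the trivial group respectively. By Goursat, a fibre product $G=\Dih_n\times_Q\Dih_m$ is determined by normal subgroups $H_1\triangleleft\Dih_n$, $H_2\triangleleft\Dih_m$ and an isomorphism $\varphi\colon\Dih_n/H_1\simeq\Dih_m/H_2=Q$. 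If $Q=1$, then $H_1=\Dih_n$, $H_2=\Dih_m$, giving the first alternative $G=\Dih_n\times\Dih_m$. Otherwise both $H_1, H_2$ are proper, so $H_1=\langle r^d\rangle$ and $H_2=\langle r'^d\rangle$, with the common $d$ dividing $\gcd(n,m)$ (equality forced by comparing the orders $2d$ of the two copies of $Q$).

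In this second case I would identify $G$ as a generalized dihedral group by extracting the correct abelian subgroup. Let $A\subset G$ be the preimage of the rotation subgroup $\Cyc_d\subset\Dih_d=Q$; then $A$ sits inside $\langle r\rangle\times\langle r'\rangle\simeq\Cyc_n\times\Cyc_m$, in particular is abelian. Any subgroup of $\Cyc_n\times\Cyc_m$ has at most two generators (lift to $\ZZ^2$, which has subgroups free of rank $\leq 2$), so by the structure theorem $A$ is a direct product of at most two cyclic groups. The remaining elements of $G$ lie over the reflection coset of $\Dih_d$, hence have the form $(r^a s, r'^b s')$; each such element squares to $(\id,\id)$ and conjugates every $(r^i, r'^j)\in A$ to its inverse. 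Choosing one such involution $\tau$, I conclude $G=A\rtimes\langle\tau\rangle\simeq\Dih(A)$, which proves~(1).

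For~(2), the ``if'' direction is immediate: if $A\simeq\Cyc_k$, then $\Dih(A)\simeq\Dih_k$ has the standard faithful $2$-dimensional representation of Notation~\ref{notation: matrices}. For the converse, suppose $\rho\colon\Dih(A)\to\GL_2(\kk)$ is faithful. Since $\rho(A)$ is finite abelian it is simultaneously diagonalizable; after change of basis, assume $\rho(A)$ lies in the diagonal torus $T$. Writing $\tau$ for the generator of the $\Cyc_2$ factor, the matrix $M:=\rho(\tau)$ conjugates each $\rho(\alpha)$ to $\rho(\alpha)^{-1}$. I would split the argument into three sub-cases. If some $\rho(\alpha_0)$ has distinct eigenvalues, then $M$ must permute its two $1$-dimensional eigenspaces (the coordinate axes), so $M$ is diagonal or anti-diagonal. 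In the diagonal sub-case $M$ commutes with $\rho(A)$, so $\rho(\alpha)^2=\id$ for all $\alpha$; hence $A$ has exponent $2$, inversion on $A$ is trivial, $\Dih(A)\simeq A\times\Cyc_2$, and since every finite abelian subgroup of $\GL_2(\kk)$ has rank at most $2$, we get $\rk A\leq 1$ so $A$ is cyclic. In the anti-diagonal sub-case, conjugation by $M$ swaps the two diagonal entries of $\rho(\alpha)$, forcing $\rho(A)\subset\{\diag(u,u^{-1}):u\in\kk^*\}$, a finite subgroup of $\kk^*$ and thus cyclic. Finally, if every $\rho(\alpha)$ is scalar, then $\rho(A)\subset\Center(\GL_2(\kk))\simeq\kk^*$ is once more cyclic. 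In all cases $A\simeq\rho(A)$ is cyclic, as claimed.

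The delicate step will be the ``diagonal $M$'' sub-case in part~(2): one must notice that inversion on an exponent-$2$ group is trivial, so that $\Dih(A)$ collapses to $A\times\Cyc_2$, and then invoke the rank bound $\rk\leq 2$ for abelian subgroups of $\GL_2(\kk)$ to rule out $A\simeq\Klein_4$. The rest of the argument is a clean bookkeeping exercise with Goursat and the standard normalizer structure of the maximal torus in $\GL_2$.
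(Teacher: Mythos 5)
Your proof is correct. For part (1) you follow essentially the same route as the paper: Goursat's lemma plus the fact that for odd $n$ the proper normal subgroups of $\Dih_n$ are exactly the rotation subgroups $\langle r^d\rangle$, so that the index-two subgroup $A$ of $G$ lying over the rotations of $Q$ sits inside $\Cyc_n\times\Cyc_m$, while every element over the reflection coset is an involution inverting $A$. The paper organizes this as a three-way case split on $Q$ (trivial, $\Cyc_2$, $\Dih_q$), whereas you absorb the middle case into the general one by treating $\Cyc_2$ as $\Dih_1$; this is only a cosmetic difference.

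For part (2) your argument is genuinely different from the paper's. The paper observes that $\Center(\Dih(A))=\{\id\}$ whenever $A\subset\Cyc_n\times\Cyc_m$ is nontrivial of odd order, so a faithful representation $G\hookrightarrow\GL_2(\kk)$ descends to an embedding $G\hookrightarrow\PGL_2(\kk)$, and then Klein's classification (Proposition~\ref{prop: Klein}) forces $A$ to be cyclic. You instead diagonalize $\rho(A)$ and analyze how $\rho(\tau)$ can normalize the torus: either it is anti-diagonal, pinning $\rho(A)$ inside the one-parameter subgroup $\{\diag(u,u^{-1})\}\simeq\kk^*$, or it is diagonal, forcing $A$ to have exponent $2$ and then the rank bound for abelian subgroups of $\GL_2$ finishes the job; the all-scalar case is immediate. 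Your version is more elementary (it avoids the classification of finite subgroups of $\PGL_2$ entirely) and is in fact more general, since it nowhere uses that $|A|$ is odd and so applies to an arbitrary finite abelian $A$; the price is a slightly longer case analysis where the paper gets away with one line about the center. Both arguments are complete, and the delicate sub-case you flag (diagonal $\rho(\tau)$, collapsing $\Dih(A)$ to $A\times\Cyc_2$) is handled correctly.
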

\begin{proof}
	Let us fix the presentations $\Dih_n=\langle r_1,s_1\ |\ r_1^n=s_1^2=(s_1r_1)^2=\id\rangle$, $\Dih_m=\langle r_2,s_2\ |\  r_2^m=s_2^2=(s_2r_2)^2=\id\rangle$. We then have the following possibilities for $G$:
	\begin{enumerate}
		\item[(i)] $Q\simeq\id$. In this case, Lemma \ref{lem: Goursat exact sequence} implies that $G\simeq\Dih_n\times\Dih_m$.
		\item[(ii)] $Q\simeq\Cyc_2$. Since $n$ and $m$ are odd, Lemma \ref{lem: Goursat exact sequence} and Lemma \ref{lem: dihedral group subgroups} show that $G$ is an extension of $\Cyc_2$ by $A=\Cyc_n\times\Cyc_m$. Let $\tau\in G$ be an element mapped to the generator of $Q$. It necessarily belongs to $\Dih_n\times\Dih_m\setminus A$ and, being in the fibre product, is of the form $(s_1r_1^k,s_2r_2^l)$. The conjugation by $\tau$ induces an inversion on $A$, hence the claim. 
		\item[(iii)] $Q\simeq\Dih_q$, where $q$ divides both $n$ and $m$. Then the group $G$ fits into the short exact sequence
		\[\begin{tikzcd}
			1 
			\ar{r}
			& 
			\Cyc_{n/q}\times\Cyc_{m/q}
			\ar{r}
			& 
			G
			\ar{r}{\psi}
			& 
			\Dih_q
			\ar{r}
			& 
			1,
		\end{tikzcd}\]
		where the groups $\Cyc_{n/q}$ and $\Cyc_{m/q}$ are generated by $r_1^q$ and $r_2^q$, respectively. Let $A=\psi^{-1}(\Cyc_q)$. This is an index 2 subgroup in $G$ of odd order; therefore, all its elements are of the form $(r_1^k,r_2^l)$. Hence $A$ is a subgroup of $\Cyc_n\times\Cyc_m\subset\Dih_n\times\Dih_m$. As in the previous case, we conclude that $G\simeq\Dih(A)$.
	\end{enumerate}
To prove the second claim, it is enough to notice that $\Center(\Dih(A))=\id$ for any non-trivial $A\subset\Cyc_n\times\Cyc_m$, since $n$ and $m$ are odd. In particular, if $G$ admits a faithful 2-dimensional representation, then $G$ can be embedded in $\PGL_2(\kk)$, hence $A$ is a cyclic group. The converse is obvious.
\end{proof}

\section{$G$-del Pezzo surfaces}\label{sec: del Pezzo}

\subsection{Degree 4} By Theorem \ref{theorem: Manin-Segre}, none of the groups $G$ which act biregularly on a del Pezzo surface $S$ with $\Pic(S)^G\simeq\ZZ$ and $K_S^2\leqslant 3$ is linearizable. Let $S$ be a $G$-del Pezzo surface of degree 4 and $\varphi\colon S\dashrightarrow S'$ be a $G$-birational map to another $G$-del Pezzo surface $S'$. It follows then from the classification of Sarkisov $G$-links that $\varphi$, if not an isomorphism, fits into the commutative diagram of $G$-maps
\begin{equation}\label{eq: sequence of links}
\xymatrix{
	& T\ar[d]\ar[dl]_{\pi}\ar@{-->}[r]^{\chi_1} & T_1\ar@{-->}[r]^{\chi_2}\ar[d] & T_2\ar[d]\ar@{-->}[r]^{\chi_3} & \cdots & \cdots\ar@{-->}[r]^{\chi_{n}} & T'\ar[dr]^{\pi'}\ar[d] & \\
	S & \PP^1\ar@{=}[r] & \PP^1\ar@{=}[r] & \PP^1\ar@{=}[r] && \ar@{=}[r] & \PP^1 & S'		
}
\end{equation}
where $\pi$ is the blow-up of a $G$-fixed point on $S$, which leads to a cubic surface $T$ equipped with a structure of a $G$-conic bundle (so $\pi$ is a link of type $\I$); the maps $\chi_i$ are elementary transformations of $G$-conic bundles (links of type $\II$), and $\pi'$ is the blow-down of a $G$-orbit of $(-1)$-curves, where $S'$ is again a $G$-del Pezzo surface of degree 4. We conclude that none of such groups $G$ is linearizable.

\subsection{Degree 5}\label{subsec: dP5} Let $S$ be a $G$-del Pezzo surface of degree 5. Recall that there is a single isomorphism class of del Pezzo surfaces of degree 5 over $\kk=\overline{\kk}$. Every Sarkisov $G$-link starting from $S$ is of type II, where $\eta$ blows up a $G$-orbit of length $d$, and one of the following holds:
\begin{enumerate}
	\item $S\simeq S'$, $d=4$, $\chi$ is a birational Bertini involution;
	\item $S\simeq S'$, $d=3$, $\chi$ is a birational Geiser involution;
	\item $S'\simeq\PP^1\times\PP^1$, $d=2$;
	\item $S'\simeq\PP^2$, $d=1$.
\end{enumerate}

The following Proposition lists all possible groups $G$.

\begin{proposition}[{\cite[Theorem 6.4]{DolgachevIskovskikh}}]\label{prop: dP5 minimal groups}
	Let $S$ be a del Pezzo surface of degree $5$ and $G\subset\Aut(S)$ be a group such that $\Pic(S)^G\simeq\ZZ$. Then $G$ is isomorphic to one of the following five groups:
	\[
	\Sym_5,\ \ \Alt_5,\ \ \Frobenius_5,\ \ \Dih_{5},\ \ \Cyc_5,
	\]
	where $\Frobenius_5=\langle a,b\ |\ a^5=b^4=1,bab^{-1}=a^3 \rangle $ is the Frobenius group\footnote{GAP ID [20,3]} of order 20.
\end{proposition}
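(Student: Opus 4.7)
The plan is to exploit the classical identification $\Aut(S)\simeq\Sym_5$ and translate the rank condition $\Pic(S)^G\simeq\ZZ$ into a transitivity statement about the natural $\Sym_5$-action on a distinguished set of five divisor classes on~$S$.

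Realizing $S$ as the blow-up of $\PP^2$ at four points in general position, with exceptional divisors $E_1,\ldots,E_4$ and $H$ the pull-back of a line, I would first enumerate the conic bundle structures on $S$. Solving $f^2=0$ and $-K_S\cdot f=2$ among effective classes yields exactly five pencils, namely $f_i=H-E_i$ for $1\leqslant i\leqslant 4$ and $f_5=2H-E_1-\cdots-E_4$, and these satisfy the single numerical relation $\sum_i f_i=-2K_S$. A short check shows that $\{f_1,\ldots,f_5\}$ is a $\QQ$-basis of $\Pic(S)\otimes\QQ$. Since $\Aut(S)\simeq\Sym_5$ acts faithfully on $\Pic(S)$ and must permute the intrinsically defined set $\{f_1,\ldots,f_5\}$, the induced representation on $\Pic(S)\otimes\QQ$ is isomorphic to the tautological permutation representation of $\Sym_5$ on five letters, which decomposes equivariantly as $\mathbf{1}\oplus V$, where $\mathbf{1}=\QQ\cdot K_S$ and $V$ is the standard $4$-dimensional irreducible of $\Sym_5$.

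The key observation is then purely formal: for any $G\subset\Sym_5$, the dimension of $(\Pic(S)\otimes\QQ)^G$ equals the number of $G$-orbits on the set of five pencils, and the $G$-fixed sublattice of $\Pic(S)$ is automatically saturated. Hence $\Pic(S)^G\simeq\ZZ$ if and only if $G$ acts transitively on $\{f_1,\ldots,f_5\}$, and the proposition reduces to listing the transitive subgroups of $\Sym_5$.

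To finish, I would invoke the classical description of transitive subgroups of a symmetric group of prime degree. By Galois's theorem, any solvable transitive subgroup of $\Sym_p$ embeds into the affine group $\mathrm{AGL}_1(\FF_p)$; for $p=5$ this is precisely $\Frobenius_5=\Cyc_5\rtimes\Cyc_4$, inside which the subgroups still acting transitively (i.e.\ containing a $5$-cycle) are $\Cyc_5$, $\Dih_5$, and $\Frobenius_5$. The nonsolvable case is immediate: a nonsolvable subgroup of $\Sym_5$ must contain $\Alt_5$, leaving only $\Alt_5$ and $\Sym_5$. The main obstacle is really the first step, namely verifying that the $\Sym_5$-representation on $\Pic(S)$ is the permutation representation on the five conic bundle classes; once this is established, the rest is routine group theory.
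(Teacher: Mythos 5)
Your argument is correct. Note that the paper offers no proof of this proposition at all: it is quoted directly from Dolgachev--Iskovskikh, so there is nothing to compare against except the standard argument in that reference, which your write-up essentially reproduces. The chain of reasoning -- the five conic classes $f_i$ with $f_i^2=0$, $-K_S\cdot f_i=2$ form a $\QQ$-basis of $\Pic(S)\otimes\QQ$ with $\sum f_i=-2K_S$; the action of $\Aut(S)\simeq\Sym_5\simeq W(A_4)$ on $\Pic(S)\otimes\QQ$ is the permutation representation $\mathbf 1\oplus V$ on these five classes; $\rk\Pic(S)^G$ equals the number of $G$-orbits on $\{f_1,\dots,f_5\}$; hence minimality is equivalent to transitivity, and the transitive subgroups of $\Sym_5$ are exactly $\Cyc_5$, $\Dih_5$, $\Frobenius_5=\mathrm{AGL}_1(\FF_5)$, $\Alt_5$, $\Sym_5$ -- is sound at every step. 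Two small points you gloss over but which do check out: the enumeration of classes with $f^2=0$, $-K_S\cdot f=2$ really does close off at the five listed (Cauchy--Schwarz on $(\sum b_i)^2\leqslant 4\sum b_i^2$ forces $a\in\{1,2\}$ for $f=aH-\sum b_iE_i$, so no effectivity hypothesis is even needed), and faithfulness of $\Aut(S)\actsfromright\Pic(S)$ for degree $5$ is the classical fact you would need to cite to conclude that permuting the $f_i$ trivially forces the identity.
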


\begin{proposition}[{\cite[Example 6.3]{CheltsovGAFA}, \cite[Theorem B.10]{CheltsovTwoLocalInequalities}}]\label{prop: dP5 S5-rigidity}
	The del Pezzo surface of degree 5 is $\Alt_5$- and $\Sym_5$-birationally superrigid. 
\end{proposition}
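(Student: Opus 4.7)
The plan is to reduce the question to pure orbit counting. By the classification of Sarkisov $G$-links recalled just above, every non-trivial $G$-birational map $\chi\colon S\dashrightarrow S'$ to a $G$-Mori fibre space $S'$ factors through a link of type~$\II$ in which $\eta$ blows up a $G$-orbit of length $d\in\{1,2,3,4\}$. So it will suffice to prove that, for $G\in\{\Alt_5,\Sym_5\}$, the surface $S$ carries no $G$-orbit of length at most $4$: all four kinds of links will then fail to be $G$-equivariant and superrigidity will follow.

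I would first treat the case $G=\Alt_5$. Since $\Alt_5$ is simple of order $60$, each of its proper subgroups has index at least $5$ (the possible subgroup orders are $1,2,3,4,5,6,10,12$), so by the Orbit--Stabilizer Theorem every non-trivial $\Alt_5$-orbit on $S$ has length $\geqslant 5$. It remains to rule out $\Alt_5$-fixed points. Suppose $p\in S$ were such a point; then $\Alt_5$ acts linearly on $T_pS\simeq\kk^2$. The irreducible complex representations of $\Alt_5$ have dimensions $1,3,3,4,5$, and since $\Alt_5$ is perfect its only $1$-dimensional representation is trivial; hence every $2$-dimensional representation of $\Alt_5$ is trivial, so the action on $T_pS$ is trivial. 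By equivariant linearization of the action at~$p$, the fixed locus $S^{\Alt_5}$ is smooth at~$p$ with tangent space $(T_pS)^{\Alt_5}=T_pS$, so it contains an open neighbourhood of $p$. Being also closed with $S$ irreducible, we conclude $S^{\Alt_5}=S$, contradicting faithfulness of the action.

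For $G=\Sym_5$, I would exploit that $\Alt_5$ is a normal subgroup of $\Sym_5$: by Lemma~\ref{lem: orbits of subgroups}, every $\Sym_5$-orbit on $S$ decomposes as a disjoint union of $\Alt_5$-orbits of a common length, which by the previous step is at least $5$. Thus any $\Sym_5$-orbit itself has length $\geqslant 5$, and the initial reduction concludes the proof. The only genuinely non-formal ingredient I expect is the representation-theoretic argument excluding an $\Alt_5$-fixed point on $S$; everything else is bookkeeping with the recalled Sarkisov classification and elementary group theory.
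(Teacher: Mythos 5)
Your proposal is correct and follows essentially the same route as the paper: both arguments rule out all $G$-orbits of length $d\leqslant 4$ (so that no Sarkisov link of type $\II$ can start from $S$) by combining subgroup-index counting with the faithfulness of the tangent representation at a would-be fixed point. The only cosmetic differences are that you rederive the tangent-space fact for $\Alt_5$ via its character table instead of citing \cite[Lemma 2.4]{BialynickiBirula}, and you handle $\Sym_5$ by reduction to its normal subgroup $\Alt_5$ rather than by noting directly that $\Sym_5$ has no faithful $2$-dimensional representation and no subgroups of index $3$ or $4$.
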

\begin{proof}
	None of these two groups has faithful 2-dimensional representations over $\CC$, hence none of them has an orbit of size 1 or 2 on our surface: recall \cite[Lemma 2.4]{BialynickiBirula} that if $X$ is an irreducible algebraic variety and $G\subset\Aut(X)$ is a finite group fixing a point $p\in X$ then the induced linear representation $G\hookrightarrow\GL(T_pX)$ is faithful. Furthermore, these groups have no subgroups of index 3 or 4, hence there are no equivariant birational Bertini or Geiser involutions on (with respect to these groups). The result follows.
\end{proof}

\begin{proposition}\label{dP5: D5 is linearizable}
	Let $S$ be a $G$-del Pezzo surface of degree 5. Then $G$ is linearizable if and only if $G\simeq\Cyc_5$ or $G\simeq\Dih_5$.
\end{proposition}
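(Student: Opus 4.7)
\emph{Proof plan.} For the ``only if'' direction, we rule out $G\in\{\Sym_5,\Alt_5,\Frobenius_5\}$ by two different methods. If $G\in\{\Sym_5,\Frobenius_5\}$, I would show that $G$ does not embed into $\PGL_3(\kk)$ at all, hence a fortiori cannot be linearizable, by going through Blichfeldt's classification recalled in Subsection~\ref{subsec: PGL3}: neither $120$ nor $20$ appears among the orders $\{36,60,72,168,216,360\}$ of the primitive subgroups; neither group is abelian, so intransitive type~$I_1$ is excluded; both have trivial center, so any embedding into $\GL_2(\kk)$ would descend to an embedding into $\PGL_2(\kk)$, contradicting Klein's list in Proposition~\ref{prop: Klein} and excluding type~$I_2$; and neither admits a quotient isomorphic to $\Cyc_3$ or $\Sym_3$, since the only nontrivial proper normal subgroups are $\Alt_5\subset\Sym_5$ and $\Cyc_5\subset\Dih_5\subset\Frobenius_5$, ruling out the imprimitive case. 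For $G=\Alt_5$, Proposition~\ref{prop: dP5 S5-rigidity} states that $S$ is $\Alt_5$-birationally superrigid, so $S$ is not $\Alt_5$-birational to $\PP^2$.

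For the ``if'' direction, I would construct explicit Sarkisov links of type~$\II$ from the list (1)--(4) above. Using the realisation $S\simeq\overline{M}_{0,5}$ with $\Sym_5$ acting by permuting the marked points, let $c$ be a $5$-cycle. A $c$-fixed point of $S$ corresponds to an ordered tuple $(x_1,\ldots,x_5)\in(\PP^1)^5$ whose cyclic relabelling by $c$ is realised by some $\PGL_2$-automorphism $g$; such $g$ has order $5$ with two fixed points, so after normalisation $x_i=\zeta^{i-1}$ for some primitive fifth root of unity $\zeta$. Modding out by the residual $\PGL_2$-symmetry $z\mapsto z^{-1}$ yields exactly two $c$-fixed points $P_1,P_2$ on $S$. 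For $G=\Cyc_5=\langle c\rangle$, either $P_i$ is a $G$-orbit of length $d=1$; blowing it up realises the link of type~(4) to $\PP^2$, after which the action is biregular and hence linear. For $G=\Dih_5=\langle c,s\rangle$, a short computation shows that the involution $s$ (which inverts $c$ and acts by $\zeta\mapsto\zeta^{-1}$ on the data above) swaps $P_1$ and $P_2$, so $\{P_1,P_2\}$ is a $G$-orbit of length $d=2$; blowing it up realises the link of type~(3) to $\PP^1\times\PP^1$, on which $\Dih_5$ acts diagonally as a subgroup of the form $\Dih_{2n+1}\times_Q\Dih_{2m+1}$ with $n=m=2$ isomorphic to $\Dih_5$. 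By the Main Theorem (or by a direct additional elementary transformation reducing to the $\PP^2$ case) this action is linearizable.

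The main obstacle will be checking that the chosen orbits are in sufficiently general position for the Sarkisov links to exist: one must verify that $P_i$ does not lie on any of the ten $(-1)$-curves of $S$, so that the blow-up is a genuine $G$-del Pezzo surface of degree~$4$ (resp.~$3$) rather than a weak one, and that the complementary contraction prescribed by links~(3)--(4) is actually available $G$-equivariantly. Both reduce to routine verifications using the Iskovskikh classification of Sarkisov $G$-links cited in Subsection~\ref{subsec: Sarkisov theory} together with the standard Petersen-graph description of the $(-1)$-curves on $S$ and the action of $\Cyc_5,\Dih_5\subset\Sym_5$ upon them.
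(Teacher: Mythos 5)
Your ``only if'' direction is correct and is essentially the paper's argument: $\Alt_5$ is excluded by the superrigidity statement of Proposition~\ref{prop: dP5 S5-rigidity}, and $\Sym_5$, $\Frobenius_5$ are excluded because they do not embed into $\PGL_3(\kk)$; your case-by-case run through Blichfeldt's classification (no abelian, primitive or imprimitive possibility, and trivial centre pushing any type-$I_2$ embedding down to $\PGL_2(\kk)$, where Klein's list rules both groups out) is a valid, slightly more detailed version of what the paper asserts.

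The ``if'' direction contains a genuine computational error. With $P_1,P_2$ the two fixed points of the $5$-cycle $c$ on $S\simeq\overline{M}_{0,5}$, corresponding to the configurations $x_i=\zeta^{k(i-1)}$ with $k\in\{1,2\}$ modulo the identification $k\sim -k$ induced by $z\mapsto z^{-1}$, the involution $s$ with $scs^{-1}=c^{-1}$ sends the class of $k$ to the class of $-k$, which is the \emph{same} class. Hence $s$ fixes $P_1$ and $P_2$ individually, and $\Dih_5$ has two fixed points on $S$ rather than an orbit of length $2$. (One sees this concretely in the paper's model: $\Dih_5=\langle r,s\rangle$ with $r\colon[x:y:z]\mapsto[x:\omega_5 y:\omega_5^{-1}z]$ and $s\colon[x:y:z]\mapsto[x:z:y]$ preserves the conic $yz=x^2$ through the blown-up orbit, so contracting that conic produces a $\Dih_5$-fixed point of $S$. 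It is the order-$4$ elements of $\Frobenius_5$, which conjugate $c$ to $c^2$, that swap $P_1$ and $P_2$ --- consistent with the paper's remark that the unique $\Frobenius_5$-Mori fibre space birational to $S$ is $\PP^1\times\PP^1$.) The error is repairable: since $\Dih_5$ fixes a point of $S$ lying on no $(-1)$-curve, the same $d=1$ link of type (4) you use for $\Cyc_5$ linearizes $\Dih_5$ directly, with no detour through $\PP^1\times\PP^1$. Note also that even if the $d=2$ link had applied, its target would be $\PP^1\times\PP^1$ as a $G$-\emph{del Pezzo surface} ($\Pic^G$ of rank one, rulings swapped), so the relevant statement would be Proposition~\ref{prop: dP8 Pic=1 cyclic} (the ruling-preserving subgroup being the cyclic $\Cyc_5$), not the rank-two fibre product $\Dih_{2n+1}\times_Q\Dih_{2m+1}$ you invoke. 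For comparison, the paper argues in the opposite direction: it takes the explicit linear actions above on $\PP^2$, blows up the length-$5$ orbit of $[1:1:1]$ lying on a smooth conic, and contracts the conic; uniqueness of the conjugacy classes of $\Cyc_5$ and $\Dih_5$ in $\Aut(S)\simeq\Sym_5$ then covers all actions.
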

\begin{proof}
	Proposition \ref{prop: dP5 S5-rigidity} implies that the groups $\Alt_5$ and $\Sym_5$ are not linearizable; in fact, $\Sym_5$ is not a subgroup of $\PGL_3(\CC)$. The Frobenius group $\Frobenius_5$ is not a subgroup of $\PGL_3(\CC)$ either: according to description given in Section \ref{subsec: PGL3}, it is obviously not transitive, and moreover has no faithful 2-dimensional representations (in fact, by \cite[Theorem 1.1]{Wolter}, there exists a unique $\Frobenius_5$-del Pezzo surface which is $\Frobenius_5$-birational to $S$, namely $\PP^1\times\PP^1$). 
	
	We now show that the groups $\Cyc_5$ and $\Dih_5$ are linearizable. Let $G$ be any of these groups. It is enough to construct an explicit $G$-equivariant map $\PP^2\dashrightarrow S$, where $S$ is a $G$-del Pezzo surface of degree 5, because there is a single $G$-isomorphism class of such surfaces. Indeed, there is a single conjugacy class in $\Aut(S)\simeq\Sym_5$ of $\Cyc_5$ and $\Dih_5$, so any isomorphism $S\iso S'$ to the del Pezzo surface $S'$ of degree 5 can be made a $G$-isomorphism, after composing with a suitable automorphism of $S'$. Now, to construct an explicit $G$-birational map $\PP^2\dashrightarrow S$, one can take two linear automorphisms
	\[
	r\colon [x:y:z]\mapsto [x:\omega_5y:\omega_5^{-1}z],\ \ s\colon [x:y:z]\mapsto [x:z:y],
	\] 
	and let $G_1=\langle r\rangle\simeq\Cyc_5$, $G_2=\langle r,s\rangle\simeq\Dih_5$. The $G_1$- and $G_2$-orbit of the point $[1:1:1]$ is a set of 5 points in general position on $\PP^2$, lying on the unique smooth conic $Q\subset\PP^2$. By blowing up these points and contracting the strict transform of $Q$ we get a $G_1$-birational (respectively, $G_2$-birational) map from $\PP^2$ to $S$.
\end{proof}

\begin{remark}\label{rem: dP5 stable linearizability}
	The actions of $\Alt_5$ and $\Sym_5$ are stably linearizable. More precisely, by \cite[Proposition 4.7]{ProkhorovFieldsOfInvariants}, Prokhorov shows that  $S\times\PP^1$ is $\Sym_5$-birational to the Segre cubic threefold $\sum_{i=0}^{5}x_i=\sum_{i=0}^{5}x_i^3=0$, which is obviously acted on by $\Sym_6$. Recall that, up to conjugation, the group $\Sym_6$ has two subgroups isomorphic to $\Sym_5$, given by the standard and
	the non-standard embeddings (which differ by an outer automorphism of $\Sym_6$). In the construction above, $S\times\PP^1$ turns out to be $\Sym_5$-birational to the Segre cubic with a non-standard action of $\Sym_5$, which is linearizable. Recently, B. Hassett and Yu. Tschinkel gave another proof of this fact using the equivariant torsor formalism \cite[\S 8.2]{HassettTschinkelTorsors}.
\end{remark}

\subsection{Degree 6}\label{subsec: dP6} Let $S$ be a del Pezzo surface of degree $K_S^2=6$. Then $S$ is the blow-up $\pi\colon S\to\PP^2$ in three non-collinear points $p_1,p_2,p_3$, which we may assume to be $[1:0:0]$, $[0:1:0]$ and $[0:0:1]$, respectively. The surface $S$ can be given as
\begin{equation}\label{eq: del Pezzo of degree 6}
	\big\{([x_0:x_1:x_2],[y_0:y_1:y_2])\in\PP^2\times\PP^2:\ x_0y_0=x_1y_1=x_2y_2 \big\}.
\end{equation}
The set of $(-1)$-curves on $S$ consists of six curves: the exceptional divisors of blow-up $e_i={\pi}^{-1}(p_i)$ and the strict transforms of the lines $d_{ij}$ passing through $p_i,\ p_j$. In the anticanonical embedding $S\hookrightarrow\PP^6$ these $(-1)$-curves form a hexagon $\Sigma$; the configuration of lines is shown in the diagram (\ref{pic: hexagon}) below.
\begin{equation}\label{pic: hexagon}
	\begin{tikzpicture}
		\newdimen\R
		\R=2.7cm
		\draw (0:\R) \foreach \x in {60,120,...,360} {  -- (\x:\R) };
		
		\foreach \x/\l/\p in
		{ 60/{}/above,
			120/{}/above,
			180/{}/left,
			240/{}/below,
			300/{}/below,
			360/{}/right
		}
		\node[inner sep=1pt,circle,draw,fill,label={\p:\l}] at (\x:\R) {};
		
		\foreach \start/\end/\label in
		{ 0/60/$y_0=y_2=0$,
			60/120/$x_1=x_2=0$,
			120/180/$y_0=y_1=0$,
			180/240/$x_0=x_2=0$,
			240/300/$y_1=y_2=0$,
			300/360/$x_0=x_1=0$
		}
		\draw (\start:\R) -- (\end:\R) node[midway, above, fill=white] {\label};
	\end{tikzpicture}
\end{equation}
Note that $\Sigma$ is naturally acted on by $\Aut(S)$, so there is a homomorphism
\[
\Phi\colon \Aut(S)\to \Aut(\Sigma)\simeq\Dih_{6}=\langle r,s\ |\ r^6=s^2=\id,\ srs^{-1}=r^{-1}\rangle,
\]
where $r$ is a rotation by $\pi/3$ and $s$ is a reflection. The kernel of $\Phi$ is the maximal torus $\Torus\subset\PGL_3(\kk)$, isomorphic to $(\kk^*)^3/\kk^*\simeq(\kk^*)^2$, and acts on $S$ by
\begin{equation}\label{eq: del Pezzo 6 torus action}
	(\lambda_0,\lambda_1,\lambda_2)\cdot ([x_0:x_1:x_2],[y_0:y_1:y_2])=([\lambda_0 x_0:\lambda_1x_1:\lambda_2x_2],[\lambda_0^{-1}y_0:\lambda_1^{-1}y_1:\lambda_2^{-1}y_2]).
\end{equation}
The action of $\Torus$ on $S\setminus\Sigma$ is faithful and transitive, and the automorphism group of $\Aut(S)$ fits into the short exact sequence 
\[
\begin{tikzcd}
	1 
	\ar{r}
	& 
	{\Torus} 
	\ar{r}
	& 
	\Aut(S) 
	\ar{r}{\Phi}
	& 
	\Dih_6
	\ar{r}
	& 
	1
\end{tikzcd}
\]
with $\Phi(\Aut(S))\simeq\Dih_6\simeq\Sym_3\times\Cyc_2$, where $\Cyc_2$ is the lift of the standard quadratic involution, acting by
\begin{equation}\label{eq: del Pezzo 6 Cremona}
	\iota\colon ([x_0:x_1:x_2],[y_0:y_1:y_2])\mapsto ([y_0:y_1:y_2],[x_0:x_1:x_2]),
\end{equation}
and $\Sym_3$ acts naturally by permutations on each of the two triples $x_0,x_1,x_2$ and $y_0,y_1,y_2$. In what follows, we will denote
\begin{equation}\label{eq: dP6 order 3 rotation}
	\theta\colon ([x_0:x_1:x_2],[y_0:y_1:y_2])\mapsto ([x_1:x_2:x_0],[y_1:y_2:y_0])
\end{equation}
the generator of $\Cyc_3\subset\Sym_3$. Then $\Dih_6$ is generated by the element $\rho=\theta\circ\iota$ of order 6 (rotation) and the element $\sigma$ of order 2 (reflection), whose actions are given by
\begin{gather}\label{eq: generators of D6}
	\rho\colon ([x_0:x_1:x_2],[y_0:y_1:y_2])\mapsto ([y_1:y_2:y_0],[x_1:x_2:x_0]),\\
	\sigma\colon ([x_0:x_1:x_2],[y_0:y_1:y_2])\mapsto ([y_0:y_2:y_1],[x_0:x_2:x_1]).	
\end{gather}

Every Sarkisov link $\chi\colon S\dashrightarrow S'$ is of type II and starts with blowing up a $G$-orbit of length~$d$. Then one of the following holds:
\begin{enumerate}
	\item $S\simeq S'$, $d=5$, $\chi$ is a birational Bertini involution;
	\item $S\simeq S'$, $d=4$, $\chi$ is a birational Geiser involution;
	\item $d=3$, $K_{S'}^2=6$;
	\item $d=2$, $K_{S'}^2=6$;
	\item $d=1$, $S'\simeq\PP^1\times\PP^1$.
\end{enumerate}

\begin{remark}
	In the cases (3) and (4), the surfaces $S'$ \emph{does not} have to be $G$-isomorphic to~$S$, see \cite[Example 3.9]{YasinskyGBirationalRigidity}.
\end{remark}

\begin{lemma}[{\cite[Lemma 3.7]{YasinskyGBirationalRigidity}}]\label{lem: dP6 minimal groups}
	Let $S$ be a $G$-del Pezzo surface of degree $6$. Then $G$ is of the form
	\[
	T_\bullet\langle r\rangle\simeq T_\bullet \Cyc_6,\ \ \ \ \ \ \ T_\bullet\langle r^2,s\rangle\simeq  T_\bullet\Sym_3,\ \ \ \text{or}\ \ \ \ T_\bullet\langle r,s\rangle\simeq T_\bullet\Dih_6,
	\]
	where $T\simeq\Cyc_n\times\Cyc_m$ is a subgroup of $\Torus$.
\end{lemma}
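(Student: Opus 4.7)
The plan is to restrict the exact sequence
\[
1 \to \Torus \to \Aut(S) \xrightarrow{\Phi} \Dih_6 \to 1
\]
recalled above to the finite subgroup $G$, and to determine the admissible images in $\Dih_6$ from the $G$-minimality hypothesis. Setting $T = G \cap \Torus$ and $\overline{G} = \Phi(G) \subset \Dih_6$, one immediately obtains a short exact sequence $1 \to T \to G \to \overline{G} \to 1$, which is precisely the presentation $G = T_\bullet \overline{G}$ demanded by the statement. Since $T$ is a finite subgroup of the $2$-dimensional algebraic torus $\Torus \simeq (\kk^*)^2$, it is a direct product of at most two cyclic groups, so $T \simeq \Cyc_n \times \Cyc_m$.

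The torus $\Torus$ is connected, and therefore acts trivially on the discrete lattice $\Pic(S)$; consequently the $G$-action on $\Pic(S)$ factors through $\overline{G}$ and $\Pic(S)^G = \Pic(S)^{\overline{G}}$. Hence the hypothesis $\Pic(S)^G \simeq \ZZ$ is equivalent to $\rk \Pic(S)^{\overline{G}} = 1$, and it remains to classify the subgroups $H \subset \Dih_6$ with this property. Working in the basis $(H, e_1, e_2, e_3)$ of $\Pic(S)$ and using the explicit formulas~(\ref{eq: del Pezzo 6 Cremona})--(\ref{eq: generators of D6}) for the actions of $\rho$ and $\sigma$ on the hexagon $\Sigma$ of $(-1)$-curves, one computes, for each of the ten conjugacy classes of subgroups of $\Dih_6 \simeq \Sym_3 \times \Cyc_2$, the invariant sublattice by solving a small linear system. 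The outcome is that exactly three conjugacy classes give rank $1$, namely
\[
\overline{G} \in \bigl\{\, \langle \rho \rangle \simeq \Cyc_6,\ \langle \rho^2, \sigma \rangle \simeq \Sym_3,\ \langle \rho, \sigma \rangle \simeq \Dih_6 \,\bigr\},
\]
with fixed sublattice always equal to $\ZZ (-K_S)$. This is the substance of the lemma.

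The only subtle point in the enumeration is that $\Dih_6$ contains two non-conjugate copies of $\Sym_3$, corresponding to the two conjugacy classes of reflections of the hexagon $\Sigma$. Only the copy $\langle \rho^2, \sigma \rangle$, generated by $\rho^2$ together with the ``perpendicular-bisector'' reflection $\sigma$ (which fixes no vertex of $\Sigma$), yields rank-$1$ invariants; the other $\Sym_3$, generated by $\rho^2$ together with a ``long-diagonal'' reflection (fixing one $e_i$ and the opposite $d_{jk}$), leaves invariant an additional class besides $-K_S$ and therefore does not arise from a $G$-Mori fibre space. Keeping track of this distinction is the only genuinely non-mechanical part of the argument.
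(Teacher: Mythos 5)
Correct. The paper itself only cites this lemma from \cite{YasinskyGBirationalRigidity} without reproducing a proof, but your argument — restricting the sequence $1\to\Torus\to\Aut(S)\to\Dih_6\to1$ to $G$, noting that the connected torus acts trivially on $\Pic(S)$ so that $\Pic(S)^G=\Pic(S)^{\Phi(G)}$, and then checking that exactly $\langle\rho\rangle$, $\langle\rho^2,\sigma\rangle$ and $\langle\rho,\sigma\rangle$ among the ten subgroup classes of $\Dih_6$ have rank-one invariants — is precisely the intended one, and your handling of the two non-conjugate copies of $\Sym_3$ matches the paper's Remark \ref{rem: dP6 non-minimal S3}.
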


\begin{remark}\label{rem: dP6 non-minimal S3}
	The dihedral group $\Dih_6=\langle r,s\rangle$ contains two groups isomorphic to $\Sym_3$, but only for one of them the $G$-invariant Picard number is 1, namely for $\langle r^2,s\rangle$, which we denote $\Sym_3^{\rm min}$. The group $\langle r^2,rs\rangle$ will be denoted $\Sym_3^{\rm nmin}$. This is the quotient of $\Dih_6$ by its centre $\Center(\Dih_6)\simeq\Cyc_2$.
\end{remark}

The following lemma shows that we do not lose generality by choosing particular actions of the groups $\Cyc_6$, $\Sym_3$ and $\Dih_6$.

\begin{lemma}\label{lem: dP6 standard actions}
	Let $S$ be a $G$-del Pezzo surface of degree 6, and $G$ be isomorphic to $\Cyc_6$, $\Sym_3$ or~$\Dih_6$. Then $G$ is conjugate in $\Aut(S)$ to the groups $\langle\rho\rangle$, $\langle \rho^2,\sigma\rangle$ and $\langle \rho,\sigma\rangle$, respectively. 
\end{lemma}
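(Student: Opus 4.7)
The plan is to realise any such $G$ as a section of the projection $\Aut(S) \to \Dih_6$ over the standard subgroup $H \in \{\langle r\rangle,\ \langle r^2,s\rangle,\ \langle r,s\rangle\}$, and then to invoke a cohomology vanishing to conclude that $G$ is $\Torus$-conjugate to the given section.

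First, by Lemma~\ref{lem: dP6 minimal groups}, $G$ fits into an extension $1 \to T \to G \to H \to 1$ with $T \subseteq \Torus$. Order counting forces $T = \id$ and determines $H$ uniquely when $G \simeq \Cyc_6$ (giving $H = \langle r\rangle$) or $G \simeq \Sym_3$ (giving $H = \langle r^2,s\rangle$). For $G \simeq \Dih_6$ order counting leaves, in addition to the expected $H = \langle r,s\rangle$ with $T = \id$, the possibility $H = \langle r^2,s\rangle$ with $|T| = 2$. The latter will be excluded as follows: the only normal subgroup of order $2$ in $\Dih_6$ is its centre $\langle r^3\rangle$, so $T$ would have to be central in $G$ and thus pointwise fixed by the induced conjugation action of $H$ on $\Torus$. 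A direct computation, however, will show that $\Torus^{\langle r^2\rangle}$ consists of the three classes $[1:\mu^2:\mu]$ with $\mu^3 = 1$, a cyclic group of order $3$ containing no $2$-torsion. Hence $|T|=2$ is impossible, and in every case $G$ is a set-theoretic section of the split extension $\Aut(S) = \Torus \rtimes \Dih_6 \to \Dih_6$ lying over the standard $H$.

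Next, since $\Aut(S)$ is a split extension with abelian kernel, the $\Torus$-conjugacy classes of complements to $\Torus$ projecting isomorphically onto $H$ are classified by $\Cohom^1(H, \Torus)$, with the standard subgroups $\langle \rho\rangle$, $\langle \rho^2,\sigma\rangle$, $\langle \rho,\sigma\rangle$ representing the trivial class. It therefore suffices to prove $\Cohom^1(H, \Torus) = 0$ in each of the three cases. Taking the normal subgroup $N = \langle r^2\rangle \simeq \Cyc_3$, the inflation--restriction sequence
\[
0 \to \Cohom^1(H/N, \Torus^N) \to \Cohom^1(H, \Torus) \to \Cohom^1(N, \Torus)
\]
will reduce the problem to two vanishings. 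The left term vanishes because $[H:N] \in \{2, 4\}$ is coprime to $|\Torus^N| = 3$. For the right term, the cyclic cohomology formula gives $\Cohom^1(N, \Torus) = \ker(\mathrm{Norm})/\mathrm{im}(1 - \rho^2)$; a short computation shows that the norm is identically trivial on $\Torus$ (so every element is a cocycle), while any class $[u_0:u_1:u_2]$ is a coboundary via the explicit choice $t_0 = 1$, $t_2 = \mu u_0$, $t_1 = (\mu u_1)^{-1}$ with $\mu$ a cube root of $(u_0 u_1 u_2)^{-1}$, which exists since $\kk$ is algebraically closed.

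The main obstacle is the $\Dih_6$ case of the first step, where order-counting admits two possibilities and ruling out the anomalous one requires the explicit fixed-point calculation $\Torus^{\langle r^2\rangle} \simeq \Cyc_3$. The subsequent cohomology vanishing is routine once one has reduced matters to $\Cohom^1(\Cyc_3, \Torus) = 0$, which in turn rests essentially on the algebraic closedness of $\kk$.
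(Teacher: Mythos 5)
Your proof is correct, but it takes a genuinely different route from the paper. The paper's proof defers the full statement to an external reference and only writes out the $\Dih_6$ case by an explicit coordinate computation: it parametrizes the lifts $\overline{\rho},\overline{\sigma}$ of the standard generators by torus parameters $u,v,a,b\in\kk^*$, conjugates by an explicit element $\beta\in\Torus$ to normalize $\overline{\rho}$, and then uses the relations $\overline{\sigma}^2=\id$ and $\overline{\sigma}\rho\overline{\sigma}=\rho^{-1}$ to force $a=b=1$. You instead treat all three cases uniformly: first you pin down $G\cap\Torus=\{\id\}$ and the image $H$ in $\Dih_6$, and then you classify complements of $\Torus$ in $\Torus\rtimes H$ by $\Cohom^1(H,\Torus)$, killed via inflation--restriction along $N=\langle r^2\rangle$ together with the computation $\Cohom^1(\Cyc_3,\Torus)=0$. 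Your approach buys uniformity and makes transparent where algebraic closedness enters (extraction of cube roots in the coboundary computation, which is exactly what the paper's explicit conjugation does implicitly in the $\Sym_3$ case); the paper's approach is more elementary and self-contained but must be redone case by case. Notably, your first step --- ruling out the possibility that $G\simeq\Dih_6$ sits over $\Sym_3^{\rm min}$ with $|G\cap\Torus|=2$, via $\Torus^{\langle r^2\rangle}\simeq\Cyc_3$ --- addresses a point the paper's sketch simply assumes away ("we may assume that it is generated by two elements whose images in $\Aut(\Sigma)$ are as in the standard presentation"), so your write-up is in this respect more complete. One small imprecision: pure order counting for $G\simeq\Dih_6$ also nominally leaves the case $H=\langle r\rangle$ with $|T|=2$; this is excluded either because $\Dih_6$ has no quotient isomorphic to $\Cyc_6$ (its abelianization is $\Cyc_2\times\Cyc_2$), or by your own fixed-point argument since $\Torus^{\langle r\rangle}\subseteq\Torus^{\langle r^2\rangle}$ has no $2$-torsion; it is worth saying so explicitly.
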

\begin{proof}
	This is the content of \cite[Propositions 5.6, 5.7, 5.8]{Pinardin}. We give a short direct proof here for the largest of these groups $G\simeq\Dih_6$. We may assume that it is generated by two elements $\overline{\rho}$ and $\overline{\sigma}$ whose images in $\Aut(\Sigma)$ are as in (\ref{eq: generators of D6}). The map $\overline{\rho}$ is given by
	\begin{equation}
		\overline{\rho}\colon ([x_0:x_1:x_2],[y_0:y_1:y_2])\mapsto ([u y_1:v y_2:y_0],[u^{-1} x_1:v^{-1}x_2:x_0] )
	\end{equation}
	for some $u,v\in\kk^*$. The map 
	\[
	\beta\colon([x_0:x_1:x_2],[y_0:y_1:y_2])\mapsto ([x_0:v^{-1}x_1:uv^{-1}x_2],[y_0:v y_1,vu^{-1}y_2])
	\]
	then conjugates $\overline{\rho}$ to $\rho$. After this conjugation, $\overline{\sigma}$ is given by
	\[
	\overline{\sigma}\colon ([x_0:x_1:x_2],[y_0:y_1:y_2])\mapsto ([ay_0:by_2:y_1],[a^{-1}x_0:b^{-1}x_2:x_1])
	\]
	for some $a,b\in\kk^*$. The condition $\overline{\sigma}^2=\id$ implies $b=1$, while the condition $\overline{\sigma}\circ\rho\circ\overline{\sigma}=\rho^{-1}$ implies $a=1$.  
\end{proof}

\begin{lemma}\label{lem: dP6 fixed points}
	Let $S$ be a $G$-del Pezzo surface of degree 6. Then $G$ fixes a point on $S$ if and only if $G\cap\Torus=\{\id\}$.
\end{lemma}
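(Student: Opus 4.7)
The plan is to exploit the short exact sequence $1\to\Torus\to\Aut(S)\xrightarrow{\Phi}\Dih_6\to 1$ together with the geometry of the hexagon $\Sigma$ of $(-1)$-curves on $S$, as presented in (\ref{pic: hexagon})--(\ref{eq: generators of D6}).

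For the direction ``$G\cap\Torus=\{\id\}$ implies $G$ has a fixed point'', I will first observe that $\Phi|_G$ is then injective, so by Lemma \ref{lem: dP6 minimal groups} the group $G$ itself is isomorphic to one of $\Cyc_6$, $\Sym_3^{\rm min}$, or $\Dih_6$. Lemma \ref{lem: dP6 standard actions} then allows me to conjugate $G$ inside $\Aut(S)$ to the standard subgroup generated by a subset of $\{\rho,\sigma\}$, and the explicit formulas in (\ref{eq: generators of D6}) show directly that both $\rho$ and $\sigma$ fix the point $p_0=([1:1:1],[1:1:1])\in S\setminus\Sigma$; hence so does $G$.

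For the converse, I will assume $G$ fixes $p\in S$ and split on whether $p\in\Sigma$. If $p\in S\setminus\Sigma$, then since $\Torus$ is abelian and acts faithfully and transitively on this open set its action there is automatically free, so $G\cap\Torus\subseteq\Stab_\Torus(p)=\{\id\}$. The only delicate step is ruling out the possibility $p\in\Sigma$: in that case the one or two irreducible components of $\Sigma$ passing through $p$ are uniquely determined by $p$ and must therefore be $G$-invariant, so $\Phi(G)\subset\Dih_6$ either fixes a vertex of the hexagon (when $p$ is a vertex) or stabilizes an edge as a set (otherwise). Since $\Dih_6$ acts transitively on the six vertices and on the six edges of $\Sigma$, the orbit--stabilizer theorem gives $|\Phi(G)|\leqslant 12/6=2$. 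But by Lemma \ref{lem: dP6 minimal groups} we have $\Phi(G)\in\{\Cyc_6,\Sym_3^{\rm min},\Dih_6\}$, each of order at least $6$, a contradiction. This combinatorial pincer on $\Sigma$ is the main (and essentially only) obstacle; everything else is unpacking of the structure of $\Aut(S)$.
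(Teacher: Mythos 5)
Your proof is correct and follows essentially the same route as the paper: the direction $G\cap\Torus=\{\id\}\Rightarrow$ fixed point is handled identically via Lemmas \ref{lem: dP6 minimal groups} and \ref{lem: dP6 standard actions} and the explicit point $([1:1:1],[1:1:1])$, and the converse rests on the same dichotomy $p\in\Sigma$ versus $p\in S\setminus\Sigma$. Your orbit--stabilizer count on the hexagon merely spells out the step the paper compresses into ``but this implies $\rk\Pic(S)^G>1$'', which is a welcome clarification but not a different argument.
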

\begin{proof}
	Assume that $G$ fixes a point on $S$, but $G\cap \Torus\ne\{\id\}$. Since $\Torus$ can be identified with a subgroup of $\PGL_3(\kk)$ which fixes 3 points on $\PP^2$, an element of $\Torus$ fixing a point on $S\setminus\Sigma$ is necessarily trivial. Therefore, a fixed point $p\in S$ of $G$ lies on $\Sigma$. But this implies $\rk\Pic(S)^G>1$.
	
	Conversely, suppose that $G\cap\Torus=\{\id\}$. Then $G$ is isomorphic to $\Cyc_6$, $\Sym_3$ or $\Dih_6$ by Lemma~\ref{lem: dP6 minimal groups}. It remains to apply Lemma~\ref{lem: dP6 standard actions} and notice that groups mentioned there fix the point $([1:1:1],[1:1:1])\in S$.
\end{proof}

\begin{lemma}\label{lem: dP6 fixed point is invariant}
	Let $\varphi\colon S_1\dashrightarrow S_2$ be a $G$-birational map of $G$-del Pezzo surfaces of degree 6. Then $G$ fixes a point on $S_1$ if and only if $G$ fixes a point on $S_2$.
\end{lemma}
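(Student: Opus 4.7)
The plan is to combine Lemmas~\ref{lem: dP6 fixed points} and~\ref{lem: dP6 minimal groups} with a short group-theoretic check. Writing $T_i := G\cap\Torus_i$ (where $\Torus_i\subset\Aut(S_i)$ denotes the maximal torus), Lemma~\ref{lem: dP6 fixed points} reduces the claim to showing that $T_1=\{\id\}$ if and only if $T_2=\{\id\}$; by applying $\varphi^{-1}$ it suffices to prove the forward direction. Lemma~\ref{lem: dP6 minimal groups} gives for each~$i$ a short exact sequence
\[
1\longrightarrow T_i\longrightarrow G\longrightarrow H_i\longrightarrow 1,\qquad H_i\in\{\Cyc_6,\Sym_3^{\rm min},\Dih_6\},
\]
and because $\varphi$ is $G$-equivariant, the abstract group $G$ is the same on both sides.

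Assume $T_1=\{\id\}$, so that $G\simeq H_1\in\{\Cyc_6,\Sym_3,\Dih_6\}$. If $|G|=6$, the order formula $|G|=|T_2|\cdot|H_2|$ together with $|H_2|\ge 6$ immediately forces $|T_2|=1$. The main case is $G\simeq\Dih_6$, where I plan to rule out $|T_2|=2$ by excluding both candidates for $H_2$. First, $H_2\ne\Cyc_6$, for otherwise $G/T_2\simeq\Cyc_6$ would be a cyclic quotient of $\Dih_6$ of order greater than $2$, but the abelianisation of $\Dih_6$ is $\Klein_4$. Second, $H_2\ne\Sym_3^{\rm min}$: in this case $T_2$ would be an $H_2$-invariant subgroup of order $2$ inside the $2$-torsion subgroup $\Torus_2[2]\simeq\Klein_4$.

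The hard part of the proof will be this last step. I plan to handle it by a direct computation in the explicit coordinates of Section~\ref{subsec: dP6}. Conjugating the torus action~\eqref{eq: del Pezzo 6 torus action} by $\rho^2$ (whose explicit form is given by Lemma~\ref{lem: dP6 standard actions}) shows that $\rho^2$ acts on $\Torus_2$ by $(\lambda_0,\lambda_1,\lambda_2)\mapsto(\lambda_2,\lambda_0,\lambda_1)$, and hence cyclically permutes the three non-trivial elements of $\Torus_2[2]$. Therefore no proper non-trivial subgroup of $\Torus_2[2]$ is $\rho^2$-invariant, let alone $H_2$-invariant. This forces $H_2=\Dih_6$ and $T_2=\{\id\}$, and Lemma~\ref{lem: dP6 fixed points} then produces the desired $G$-fixed point on $S_2$.
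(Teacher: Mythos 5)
Your proposal is correct, and its overall architecture is the same as the paper's: reduce via Lemma~\ref{lem: dP6 fixed points} to showing that triviality of $G\cap\Torus$ is preserved, then combine Lemma~\ref{lem: dP6 minimal groups} with the fact that the abstract group $G$ is unchanged under a $G$-birational map. The order count disposing of $\Cyc_6$ and $\Sym_3$, and the abelianisation argument excluding $H_2\simeq\Cyc_6$ when $G\simeq\Dih_6$, match the paper's (implicit) reasoning.

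Where you genuinely diverge is in the one step that actually requires work: excluding $G\simeq\Dih_6$ with $|T_2|=2$ and $H_2=\Sym_3^{\rm min}$. The paper handles this by asserting that the image of $\Dih_6$ in $\Aut(\Sigma_2)$ would be $\Sym_3^{\rm nmin}$, citing Remark~\ref{rem: dP6 non-minimal S3}; since both copies of $\Sym_3$ inside $\Dih_6$ are abstractly isomorphic to $\Dih_6/\Center(\Dih_6)$, that identification is left somewhat implicit. Your argument instead observes that $T_2$, being normal in $G$ and of order $2$, is an $H_2$-invariant subgroup of $\Torus_2[2]\simeq\Klein_4$, while the order-$3$ rotation in $\Sym_3^{\rm min}$ permutes the three involutions of $\Torus_2[2]$ cyclically (the conjugation action on the abelian normal subgroup $\Torus_2$ factors through $\Aut(\Sigma_2)$, so this is independent of the chosen lift). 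This is a complete and in fact more self-contained justification of the same exclusion; the only cost is the short explicit computation with the torus action~\eqref{eq: del Pezzo 6 torus action}. I see no gap.
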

\begin{proof}
	It is enough to prove the necessity. Let $\varphi\colon (S_1,G,\iota_1)\dashrightarrow (S_2,G,\iota_2)$ be a $G$-birational map, and assume that $\iota_1(G)$ fixes a point on $S_1$. By Lemma \ref{lem: dP6 fixed points}, the group $\iota_1(G)$ does not intersect the torus $\Aut^\circ(S_1)$ and hence is isomorphic to $\Cyc_6$, $\Sym_3$ or $\Dih_6$ by Lemma \ref{lem: dP6 minimal groups}. But this implies that $\iota_2(G)$ does not intersect the torus $\Aut^\circ(S_2)$: otherwise $\iota_2(G)/(\iota_2(G)\cap\Aut^\circ(S_2))$ is isomorphic to $\Cyc_2,\Cyc_3,\Cyc_2^2,\Sym_3^{\rm nmin}$ (see Remark \ref{rem: dP6 non-minimal S3}), hence $S_2$ is not a $G$-del Pezzo surface. 
\end{proof}

We now discuss (non)linearizability of three particular groups: $\langle\rho\rangle\simeq\Cyc_6$, $\langle \rho^2,\sigma\rangle\simeq\Sym_3$ and $\langle\rho,\sigma\rangle\simeq\Dih_6$. 

\begin{example}\label{ex: dP6 C6 is linearizable}
	The cyclic group $G=\langle\rho\rangle\simeq\Cyc_6$ is linearizable. Indeed, the Sarkisov $G$-link centred at the $G$-fixed point $([1:1:1],[1:1:1])\in S$ leads to the $G$-del Pezzo surface $S'\simeq\PP^1\times\PP^1$. Since $G$ is cyclic, it fixes a point on $S'$ (of course, this can be seen directly as in the next example), hence the stereographic projection from it linearizes the action of $G$.
\end{example}

\begin{example}\label{ex: dP6 S3 is linearizable}
	The symmetric group $G=\langle\rho^2,\sigma\rangle\simeq\Sym_3$ is linearizable as well. Indeed, after the same $G$-link at the fixed point $([1:1:1],[1:1:1])$, we arrive to $S'=\PP^1\times\PP^1$ acted on by~$G$. Recall that $\Aut(S')\simeq(\PGL_2(\kk)\times\PGL_2(\kk))\rtimes\Cyc_2$. Since $\Pic(S')^G=1$, the group $G$ maps non-trivially to $\Cyc_2$ factor of this semi-direct product, i.e. $G=\langle r, s\rangle$, where $r$ is an automorphism of order $3$ acting fibrewisely on $S'$ and $s$ is an involution which does not preserve the rulings of $S'$. It will be shown below in Proposition \ref{prop: dP8 Pic=1 cyclic} that such a group fixes a point on $S'$, and the stereographic projection from this point linearizes the action of $G$.
\end{example}

\begin{example}[{\emph{Iskovskikh's example}}]\label{ex: dP6 Iskovskikh's example}
	Answering the question of V. Popov, V. Iskovskikh showed in \cite{IskovskikhNonConjugate} that the group $G\simeq\Dih_6$ generated by the automorphisms $\rho$ and $\sigma$ is not linearizable. The proof relied on the classical Sarkisov theory. Recently, another proof was given by B. Hassett, A. Kresch and Yu. Tschinkel \cite[Section 7.6]{HassettKreschTschinkel} by using the Burnside group $\Burn_2(G)$ and later by Yu. Tschinkel, K. Yang and Z.~Zhang by using the combinatorial Burnside groups \cite{TschinkelYangZhang}. We refer to the Section \ref{subsec: Burnside} for more information.
\end{example}

We are ready to summarize the results of this section.

\begin{proposition}\label{prop: dP6 criterion}
	Let $S$ be a $G$-del Pezzo surface of degree $6$. Then $G$ is linearizable if and only if $G\simeq\Cyc_6$ or $G\simeq\Sym_3$.
\end{proposition}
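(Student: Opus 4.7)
The plan is to prove both implications by leveraging the classification of Sarkisov links from a $G$-del Pezzo surface of degree $6$ given above, together with the structural lemmas of this subsection.

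For the \emph{if} direction, I would invoke Lemma \ref{lem: dP6 standard actions}, which shows that any action of $\Cyc_6$ or $\Sym_3$ on a $G$-del Pezzo surface of degree $6$ is conjugate in $\Aut(S)$ to the standard actions $\langle\rho\rangle$ and $\langle\rho^2,\sigma\rangle$, respectively. It then suffices to exhibit one explicit $G$-birational map to $\PP^2$ for each of these two standard actions, which is precisely the content of Examples \ref{ex: dP6 C6 is linearizable} and \ref{ex: dP6 S3 is linearizable}: a Sarkisov link of type II at the common fixed point $([1{:}1{:}1],[1{:}1{:}1])$ sends $S$ to $\PP^1 \times \PP^1$, after which a stereographic projection from a further $G$-fixed point yields $\PP^2$.

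For the \emph{only if} direction, suppose $G$ is linearizable, so that there exists a $G$-birational map $S \dashrightarrow \PP^2$. By the Sarkisov program, this map decomposes into a sequence of $G$-links between $G$-Mori fibre spaces. Inspecting the enumeration of links starting from a $G$-del Pezzo surface of degree $6$ recalled above, I observe that links of types (1)--(4) all produce another $G$-del Pezzo surface of degree $6$, and only link of type (5), contracting an orbit of length $1$, allows one to leave this class (going to $\PP^1 \times \PP^1$). Since the chain must eventually leave the class of $G$-dP6 surfaces in order to reach $\PP^2$, it contains at least one link of type (5) performed on some intermediate $G$-del Pezzo surface $S_k$ of degree $6$; in particular, $S_k$ admits a $G$-fixed point. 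Applying Lemma \ref{lem: dP6 fixed point is invariant} to the composition $S \dashrightarrow S_k$, I conclude that $S$ itself admits a $G$-fixed point.

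Having established the existence of a $G$-fixed point on $S$, Lemma \ref{lem: dP6 fixed points} forces $G \cap \Torus = \{\id\}$, and Lemma \ref{lem: dP6 minimal groups} then restricts $G$ to be isomorphic to one of $\Cyc_6$, $\Sym_3$, or $\Dih_6$. It remains only to exclude $\Dih_6$, which is precisely Iskovskikh's Example \ref{ex: dP6 Iskovskikh's example}. The hard part is really this last step: the nonlinearizability of $\Dih_6$ is nontrivial and requires either a careful Noether--Fano/Sarkisov analysis, or the more recent Burnside group formalism discussed in Section \ref{subsec: Burnside}. Everything else in the argument is essentially a bookkeeping consequence of the link classification together with the structure results already proved.
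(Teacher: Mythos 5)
Your proposal is correct and follows essentially the same route as the paper's own proof: sufficiency via Lemma \ref{lem: dP6 standard actions} together with Examples \ref{ex: dP6 C6 is linearizable} and \ref{ex: dP6 S3 is linearizable}, and necessity by observing that every Sarkisov link from a $G$-del Pezzo surface of degree $6$ other than the one centred at a fixed point stays within that class, then pulling the fixed point back to $S$ via Lemma \ref{lem: dP6 fixed point is invariant}, applying Lemmas \ref{lem: dP6 fixed points} and \ref{lem: dP6 minimal groups}, and excluding $\Dih_6$ by Iskovskikh's Example \ref{ex: dP6 Iskovskikh's example}. Your version spells out slightly more explicitly why the chain of links must contain a link centred at an orbit of length $1$, but the argument is the same.
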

\begin{proof}
	The sufficiency follows from Examples \ref{ex: dP6 C6 is linearizable} and \ref{ex: dP6 S3 is linearizable} and Lemma \ref{lem: dP6 standard actions}. To prove the necessity, we use the classification of $G$-links given above. Assume that $G$ is linearizable. Since Bertini and Geiser involutions lead to a $G$-isomorphic surface, there exists a sequence of $G$-links $S\dashrightarrow S_1\dashrightarrow \ldots\dashrightarrow S_n=S'$, where all $S_i$ are $G$-del Pezzo surfaces of degree 6 and $G$ fixes a point on $S'$ --- so that we construct a $G$-link to $\PP^1\times\PP^1$. But then $G$ fixes a point on $S$ by Lemma \ref{lem: dP6 fixed point is invariant}. By Lemma \ref{lem: dP6 fixed points}, $G$ does not intersect the torus $\Aut^\circ(S)$, and hence $G$ maps isomorphically to a subgroup of $\Dih_6$. Now the result follows from Lemma \ref{lem: dP6 standard actions} and Examples \ref{ex: dP6 C6 is linearizable}, \ref{ex: dP6 S3 is linearizable} and \ref{ex: dP6 Iskovskikh's example}.
\end{proof}

\begin{remark}[{\emph{Cayley groups and stable linearizability}}]\label{rem: Cayley groups}
	The (non)linearizability of $\Dih_6$ and its subgroups was addressed in a different context by N. Lemire, V. Popov and Z. Reichstein in their seminal article \cite{LemirePopovReichstein}. They called a connected linear algebraic group $G$ over a field $\KK$ a \emph{Cayley group} if it admits a \emph{Cayley map}, i.e. a $G$-equivariant birational isomorphism between the group variety $G$ and its Lie algebra $\mathrm{Lie}(G)$. Let
	\rowcolors{1}{}{}
	\[
	T=\left \{
	A=\begin{bmatrix}
		a_1 & 0 & 0\\
		0 & a_2 & 0\\
		0 & 0 & a_3
	\end{bmatrix}
	\ \colon\ \det A=1
	\right \},\ \ \
	\mathfrak{t}=\left \{A=
	\begin{bmatrix}
		a_1 & 0 & 0\\
		0 & a_2 & 0\\
		0 & 0 & a_3
	\end{bmatrix}
	\ \colon\ \tr A=0
	\right \}
	\]
	be a maximal torus of $\SL_3(\CC)$ and its Lie algebra. Both are obviously acted on by the group $W=\Sym_3$ permuting $a_1$, $a_2$ and $a_3$. Furthermore, $T$ admits a natural compactification $\{x_0y_0z_0=x_1y_1z_1\}\subset\Proj\CC[x_0,x_1]\times\Proj\CC[y_0,y_1]\times\Proj\CC[z_0,z_1]$, which is a $W$-del Pezzo surface of degree 6. By Example \ref{ex: dP6 S3 is linearizable}, and as was also shown in \cite[9.1]{LemirePopovReichstein}, $T$ and $\mathfrak{t}$ are $W$-birational. By the Corollary of \cite[Lemma 3.5]{LemirePopovReichstein}, this is equivalent to showing that $\SL_3(\kk)$ is a Cayley group, because $W$ is the Weyl group of $\SL_3(\CC)$. 
	
	By the same principle, $\mathbb{G}_2$ is \emph{not} Cayley: its Weyl group is $W=\Dih_6$, the maximal torus and its Lie algebra are $W$-isomorphic to $T$ and $\mathfrak{t}$, respectively. But the latter two are not $W$-birational, as showed in Iskovskikh's Example \ref{ex: dP6 Iskovskikh's example}. Interestingly enough, by \cite[Proposition 9.1]{LemirePopovReichstein} $\mathbb{G}_2\times\mathbb{G}_m^2$ \emph{is} a Cayley group: the varieties $T\times\AA^2$ and $\mathfrak{t}\times\AA^2$ are $\Dih_6$-birational; in other words, $\Dih_6$ is \emph{stably linearizable}. It has recently been shown that one can replace $\AA^2$ with $\AA^1$ here \cite[Proposition 12]{BohningBothmerGrafTschinkel}.
\end{remark}

\begin{remark}\label{rem: dP6 S4 stably linearizable action}
	According to Proposition \ref{prop: dP6 criterion}, any finite group $G$ with $G\cap\Torus\ne\{\id\}$ is not linearizable. In \cite{HassettTschinkelTorsors}, B. Hassett and Yu. Tschinkel give an example of stably linearizable $\Sym_4$-action on the sextic del Pezzo surface (here, one has $G\cap\Torus\simeq\Klein_4$). 
\end{remark}

\section{Finite groups acting on smooth quadric surfaces}\label{sec: quadrics}

Let $S=\PP^1\times\PP^1$. Denote by $\swap$ the involution which permutes two rulings of $S$. The connected component $\Aut(S)^\circ$ of the identity is the subgroup of the automorphisms, which preserve both rulings. This subgroup is isomorphic to $\PGL_2(\kk)\times\PGL_2(\kk)$. Then there is a short exact sequence
\begin{equation}\label{eq: P1xP1 exact sequence}
	\begin{tikzcd}
		1
		\ar{r}
		& 
		\PGL_2(\kk)\times\PGL_2(\kk)
		\ar{r}
		& 
		\Aut(S) 
		\ar{r}{\psi}
		& 
		\langle\sigma\rangle
		\ar{r}
		& 
		1,
	\end{tikzcd}
\end{equation}
which splits as the semidirect product $
\Aut(S)\simeq(\PGL_2(\kk)\times\PGL_2(\kk))\rtimes\langle\swap\rangle,
$
with $\langle\swap\rangle\simeq\Cyc_2$ acting on $\PGL_2(\kk)\times\PGL_2(\kk)$ by permuting two factors.

\begin{proposition}\label{prop: exact sequence p1xp1}
	Let $S=\PP^1\times\PP^1$ and $G\subset\Aut(S)$.
	\begin{enumerate}
		\item If $\rk\Pic(S)^G\simeq\ZZ$, then $G$ fits into the short exact sequence
		\begin{equation}\label{eq: P1xP1 Picard 1 ses}
			\begin{tikzcd}
				1 & {H\times_QH} & G & {\Cyc_2} & 1,
				\arrow[from=1-1, to=1-2]
				\arrow[from=1-2, to=1-3]
				\arrow[from=1-3, to=1-4]
				\arrow[from=1-4, to=1-5]
			\end{tikzcd}
		\end{equation}
		where $H$ is a finite subgroup of $\PGL_2(\kk)$.
		\item If $\rk\Pic(S)^G\simeq\ZZ^2$, then $G\simeq H_1\times_QH_2$, where $H_1$, $H_2$ are finite subgroups of~$\PGL_2(\kk)$. 
	\end{enumerate}
\end{proposition}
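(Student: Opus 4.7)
The plan is to exploit the semidirect decomposition (\ref{eq: P1xP1 exact sequence}), namely $\Aut(S) \simeq (\PGL_2(\kk) \times \PGL_2(\kk)) \rtimes \langle \swap \rangle$, together with Goursat's lemma. I first record how $\Aut(S)$ acts on $\Pic(S) \simeq \ZZ f_1 \oplus \ZZ f_2$, where $f_1, f_2$ are the classes of the fibres of the two projections: the connected component $\PGL_2(\kk) \times \PGL_2(\kk)$ preserves each ruling and so acts trivially on $\Pic(S)$, while $\swap$ exchanges $f_1$ and $f_2$. Hence the $G$-representation on $\Pic(S)$ factors through $\psi|_G$, and one obtains the clean dichotomy
$$\rk \Pic(S)^G = 2 \iff \psi(G) = \{\id\} \iff G \subseteq \PGL_2(\kk) \times \PGL_2(\kk),$$
whereas $\rk \Pic(S)^G = 1$ is equivalent to $\psi|_G$ being surjective.

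In case (2), Goursat's lemma (Lemma \ref{lem: Goursat}) applied directly to $G \subset \PGL_2(\kk) \times \PGL_2(\kk)$ immediately yields $G \simeq H_1 \times_Q H_2$, with $H_1, H_2$ the images of $G$ under the two projections, which is the claim.

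In case (1), I set $G^+ := G \cap (\PGL_2(\kk) \times \PGL_2(\kk))$, so that $1 \to G^+ \to G \xrightarrow{\psi} \langle \swap \rangle \to 1$ is exact. Applying Goursat to $G^+$ produces a presentation $G^+ \simeq H_1 \times_Q H_2$ with $H_1, H_2 \subset \PGL_2(\kk)$; the crux of the proof is to arrange $H_1 \simeq H_2$. I pick any lift $\tau \in G$ of the generator of $\langle \swap \rangle$ and write $\tau = (\alpha, \beta) \cdot \swap$ with $\alpha, \beta \in \PGL_2(\kk)$. Since $G^+$ is normal in $G$, conjugation by $\tau$ restricts to an automorphism of $G^+$; a direct computation in the semidirect product gives
$$\tau \cdot (a,b) \cdot \tau^{-1} = (\alpha b \alpha^{-1},\, \beta a \beta^{-1}).$$
Projecting onto each factor yields $\alpha H_2 \alpha^{-1} \subseteq H_1$ and $\beta H_1 \beta^{-1} \subseteq H_2$, and finiteness forces both inclusions to be equalities. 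Setting $H := H_2$ and transporting the Goursat data through the inner $\PGL_2(\kk)$-isomorphism $H_1 \xrightarrow{\sim} H$, $h \mapsto \alpha^{-1} h \alpha$, gives the required identification $G^+ \simeq H \times_Q H$.

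The only mildly technical step is this final transport of the Goursat quintuple $\{H_1, H_2, K_1, K_2, \varphi\}$ through the inner automorphism that identifies $H_1$ with $H = H_2$; it amounts to a formal check using the explicit description in Lemma \ref{lem: Goursat}, and poses no real difficulty once the $\PGL_2(\kk)$-conjugacy of $H_1$ and $H_2$ has been produced by $\tau$.
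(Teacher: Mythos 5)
Your proof is correct and follows essentially the same route as the paper: the dichotomy via the action on $\Pic(S)$ through $\psi$, Goursat's lemma applied to $G\cap(\PGL_2(\kk)\times\PGL_2(\kk))$, and conjugation by a lift $\tau$ of $\swap$ to identify $H_1$ with $H_2$. The only cosmetic difference is that you obtain $\alpha H_2\alpha^{-1}=H_1$ from finiteness and the two opposite inclusions, whereas the paper checks surjectivity directly using $\tau^2\in K$; both work.
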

\begin{proof}
Let us restrict the exact sequence (\ref{eq: P1xP1 exact sequence}) to $G$. Clearly, $\psi(G)=\{\id\}$ if and only if $G$ preserves both rulings of $S$, which is equivalent to $\Pic(S)^G\simeq\ZZ^2$. If this is the case, then $G$ is a subgroup of $\PGL_2(\kk)\times\PGL_2(\kk)$ and the claim follows from Goursat's lemma~\ref{lem: Goursat} and Proposition \ref{prop: Klein}.

Suppose that $\psi(G)=\langle\swap\rangle$ and let $K=\Ker\psi|_G$. Let $p_1,p_2\colon\PGL_2(\kk)^2\to\PGL_2(\kk)$ be the projections onto the first and the second factors, respectively. By Goursat's lemma, we have $K=H_1\times_Q H_2$, where $H_i=p_i(K)$; in particular, every element of $K$ is of the form $(x,y)\mapsto (h_1x,h_2y)$, where $(x,y)$ are local coordinates on $S$, $h_1\in H_1$, $h_2\in H_2$. The whole group $G$ is the union of two cosets, $K$ and $\tau K$, where $\tau(x,y)=(Ay,Bx)$ for some fixed automorphisms $A,B\in\Aut(\PP^1)$. Since $K$ is normal in $G$, then the conjugation by $\tau$ sends each automorphism $(h_1,h_2)\in K$ onto $(Ah_2A^{-1},Bh_1B^{-1})$. Therefore, $H_1$ is isomorphic to $H_2$ via the map
$
h_1\mapsto Bh_1B^{-1}.
$
Indeed, it is clearly an injective homomorphism. To check surjectivity, we use that $\tau^2=(AB,BA)\in K$, i.e. $AB\in H_1$. So,  for any $h_2\in H_2$ we have that $Ah_2A^{-1}\in H_1$ and hence $(AB)^{-1}A h_2A^{-1}AB=B^{-1}h_2 B\in H_1$ is sent to $h_2$.

\end{proof}

\begin{notation}
	Everywhere below, the matrices $\Rot_n,\A,\B,\C,\D,\E$ and $\F$ are defined as in Section~\ref{subsec: PGL2}, and $\I$ stands for the identity matrix. The elements of $\Aut(S)\setminus\Aut(S)^\circ$, i.e. those which do not preserve the rulings of $S$ and which act as $(x,y)\mapsto (\M y,\N x)$ with $\M,\N\in\PGL_2(\kk)$, will be denoted $(\M,\N,\swap)$. Otherwise, we will write $(\M,\N,\id)$.
\end{notation}

Every fibre product $G=H_1\times_Q H_2$, where $H_1,H_2$ are subgroups of $\PGL_2(\kk)$, obviously acts on $S=\PP^1\times\PP^1$ and forces $\Pic(S)^G\simeq\ZZ^2$. By contrast, not every group $G$ fitting the exact sequence of the form \eqref{eq: P1xP1 Picard 1 ses}
with $H\subset\PGL_2(\kk)$ actually embeds into $\Aut(S)$, as the following example shows.

\begin{example}
	The group $G\simeq\Cyc_2^5$ obviously fits the exact sequence \eqref{eq: P1xP1 Picard 1 ses} with $H\simeq\Klein_4$ and $Q=\{\id\}$. However, it cannot be embedded\footnote{In fact, this group cannot be embedded even into the Cremona group~$\Cr_2(\kk)$, see \cite{BeauvilleElementary}.}  in $\Aut(\PP^1\times\PP^1)$. Indeed, by Lemma~ \ref{lem: finite normalizers} below, the normalizer of $H=\langle \A,\B\rangle$ in $\PGL_2(\kk)$ is $\langle \A,\B\rangle\rtimes\langle\C,\D\rangle\simeq\Klein_4\rtimes\Sym_3\simeq\Sym_4$. Let $g=(\M,\N,\swap)\in G$ be an element such that $\psi(g)=\swap$. Then $G$ is generated by $H\times H$ and $g$. Thus, by multiplying $g$ by an element of $H\times H$, we may assume that $g=(\T_1,\T_2,\swap)$, where $\T_1,\T_2\in\langle\C,\D\rangle$. But $g^2=(\T_1\T_2,\T_2\T_1,\id)\in H\times H$, which forces $\T_1=\T_2\in\{\I,\D,\C\D,\D\C\}$, or $(\T_1,\T_2)\in\{(\C,\C^2),(\C^2,\C)\}$. In all these cases, the group $G$ is isomorphic to $\Klein_4\wr\Cyc_2$. 
\end{example}

So, our next goal is to fill the existing gap in the literature: we characterize completely those finite groups which admit a faithful action on $S=\PP^1\times\PP^1$.

\begin{theorem}\label{thm: groups acting on p1xp1}
	Let $G\subset\Aut(\PP^1\times\PP^1)$ be a finite subgroup. Then $G$ is conjugate to a subgroup of one of the following groups:
	\begingroup
	\renewcommand*{\arraystretch}{1.2}
	\begin{longtable}{|l|l|l|l|}
		\hline
		\rowcolor{light-green}
		GAP & Order & Isomorphism class & Generators \\ \hline
		\textup{No id} & $48n$& $\Dih_n\times\Sym_4$ & $(\Rot_n,\I,\id),(\B,\I,\id),(\I,\A,\id),(\I,\B,\id),(\I,\C,\id),(\I,\D,\id)$ \\ \hline
		\textup{No id} & $120n$& $\Dih_n\times\Alt_5$ & $(\Rot_n,\I,\id),(\B,\I,\id),(\I,\E,\id),(\I,\F,\id)$ \\ \hline
		$[1440,5848]$ & $1440$& $\Sym_4\times\Alt_5$ & $(\A,\I,\id),(\B,\I,\id),(\C,\I,\id),(\D,\I,\id),(\I,\E,\id),(\I,\F,\id)$ \\ \hline
		\textup{No id} & $8n^2$& $\Dih_n\wr\Cyc_2$, $n\geqslant 3$ & $(\Rot_n,\I,\id),(\B,\I,\id),(\I,\I,\swap)$ \\ \hline
		$[1152,157849]$&$1152$&$\Sym_4\wr\Cyc_2$& $(\A,\I,\id),(\B,\I,\id),(\C,\I,\id),(\D,\I,\id),(\I,\I,\swap)$\\ \hline
		\textup{No id} &$7200$&$\Alt_5\wr\Cyc_2$&$(\E,\I,\id),(\F,\I,\id),(\I,\I,\swap)$\\ \hline
	\end{longtable}
	\endgroup
\end{theorem}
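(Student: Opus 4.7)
The proof branches on whether $G\subset\Aut^{\circ}(S)$ or not, and in both cases the structure provided by Proposition~\ref{prop: exact sequence p1xp1} reduces the question to an analysis of Klein subgroups of $\PGL_2(\kk)$ (Proposition~\ref{prop: Klein}), each of which occurs in a single $\PGL_2(\kk)$-conjugacy class.

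\emph{Case $\psi(G)=\{\id\}$.} Proposition~\ref{prop: exact sequence p1xp1}(2) gives $G=H_1\times_Q H_2$ with each $H_i\subset\PGL_2(\kk)$ finite. Conjugation of $G$ by an element $(g_1,g_2,\id)\in\Aut^{\circ}(S)$ acts on $H_1$ and $H_2$ independently, so one may simultaneously bring both factors to the standard forms of Notation~\ref{notation: matrices}, and since swapping the two rulings is realised by conjugation by $\swap\in\Aut(S)$, the pair $(H_1,H_2)$ may be taken unordered. It then suffices to check that for every unordered pair of Klein groups, the direct product $H_1\times H_2$ embeds into one of the maximal groups of the table: if one factor is $\Alt_5$ use $\Dih_n\times\Alt_5$, $\Sym_4\times\Alt_5$, or $\Alt_5\wr\Cyc_2$ (when both are $\Alt_5$); if one is $\Sym_4$ or $\Alt_4$ use $\Dih_n\times\Sym_4$ or $\Sym_4\wr\Cyc_2$ (when both are $\Alt_4/\Sym_4$); and if both are cyclic or dihedral, use the diagonal inclusion $\Dih_n\times\Dih_m\hookrightarrow\Dih_N\wr\Cyc_2$ with $N=\lcm(n,m)$ (reducing to $\Sym_4\wr\Cyc_2$ when $N\leqslant 2$).

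\emph{Case $\psi(G)=\langle\swap\rangle$.} By Proposition~\ref{prop: exact sequence p1xp1}(1) and the computation in its proof, one may conjugate $G$ by $(\id,\B,\id)\in\Aut^{\circ}(S)$ so that $K:=\ker(\psi|_G)$ has the form $H\times_Q H\subset H\times H$ for a single Klein subgroup $H\subset\PGL_2(\kk)$, while the distinguished element $\tau=(\A,\B,\swap)\in G\setminus K$ becomes $\tau'=(\A\B,\id,\swap)$. Since $(\tau')^2=(\A\B,\A\B,\id)\in K$ forces $\A\B\in H$, one has $\tau'\in(H\times H)\rtimes\langle\swap\rangle=H\wr\Cyc_2$ and therefore $G\subset H\wr\Cyc_2$. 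A further conjugation by $(g,g,\id)$ preserves the wreath structure and moves $H$ into the standard form of Notation~\ref{notation: matrices}, placing $G$ inside $\Dih_n\wr\Cyc_2$, $\Sym_4\wr\Cyc_2$, or $\Alt_5\wr\Cyc_2$ according to whether $H$ is cyclic/dihedral, $\Alt_4/\Sym_4$, or $\Alt_5$; the small-$n$ cases for dihedral $H$ (namely $n\leqslant 2$) are absorbed into $\Sym_4\wr\Cyc_2$ via $\Klein_4\subset\Sym_4$.

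The main obstacle is the bookkeeping in the first case, and specifically the least transparent containment $\Dih_n\times\Dih_m\hookrightarrow\Dih_N\wr\Cyc_2$ for $N=\lcm(n,m)$: one must use that the standard $\Dih_k$ of Notation~\ref{notation: matrices}, generated by $\R_k$ and $\B$, literally sits inside the standard $\Dih_N$ whenever $k\mid N$ (since $\R_k=\R_N^{N/k}$ and $\B$ is the same element in both), so that after conjugating each factor to standard form, both lie in a common $\Dih_N\subset\PGL_2(\kk)$ and the product embeds diagonally. With this inclusion in hand, the remaining matches with the table entries are a finite, routine verification.
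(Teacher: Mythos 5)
Your proposal is correct, and in the case $\psi(G)=\langle\swap\rangle$ it takes a genuinely more economical route than the paper. The paper handles that case by first proving the detailed classification results (the normalizer computation of Lemma~\ref{lem: finite normalizers}, the standard forms of fibre products in Proposition~\ref{prop: fibred product standard forms}, and the case-by-case Propositions~\ref{prop: a4 rank 1}, \ref{prop: s4 rank 1}, \ref{prop: a5 rank 1}) for $H\simeq\Alt_4,\Sym_4,\Alt_5$, and then by explicit diagonal/anti-diagonal matrix manipulations for $H$ cyclic or dihedral. You instead observe, uniformly in $H$, that the conjugation by $(\id,B^{-1},\id)$ implicit in the proof of Proposition~\ref{prop: exact sequence p1xp1} simultaneously puts $K$ inside $H\times H$ and the swap element into the form $(\M,\id,\swap)$ with $\M=AB\in H$ (forced by $\tau^2\in K$), so that $G\subset H\wr\Cyc_2$ outright; this replaces several pages of case analysis with one normalization and suffices because the theorem only asserts containment in a maximal group up to conjugacy. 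What the paper's longer route buys is the complete list of conjugacy classes of such $G$, which it records separately and which your argument does not recover. In the case $\psi(G)=\{\id\}$ your treatment coincides with the paper's, and you usefully make explicit the ``suitable $n$'' in the containment $\Dih_n\times\Dih_m\hookrightarrow\Dih_N\wr\Cyc_2$ via $N=\lcm(n,m)$ and the literal inclusion $\R_k=\R_N^{N/k}$. Two cosmetic points: your $\A,\B$ in the swap case denote the automorphisms $A,B$ of $\PP^1$ from the proof of Proposition~\ref{prop: exact sequence p1xp1}, which clashes with the fixed matrices of Notation~\ref{notation: matrices}; and with the paper's conventions the conjugating element should be $(\id,B^{-1},\id)$ rather than $(\id,B,\id)$ -- neither affects the argument.
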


The proof of this theorem will be given at the end of the~Section. We will begin with a complete description, in terms of matrix generators, of the subgroups of $\Aut(\PP^1\times\PP^1)$ that satisfy the exact sequence ~\eqref{eq: P1xP1 Picard 1 ses} with $H$ isomorphic to $\Alt_4$, $\Sym_4$, or $\Alt_5$.

\begin{lemma}\label{lem: finite normalizers}
	Let us consider $\Klein_4$, $\Alt_4$, $\Sym_4$ and $\Alt_5$ as subgroups of $\PGL_2(\kk)$. Then their normalizers in $\PGL_2(\kk)$ are the following:
	\begin{enumerate}
		\item $\Norm_{\PGL_2(\kk)}(\Klein_4)\simeq\Sym_4$;
		\item $\Norm_{\PGL_2(\kk)}(\Alt_4)\simeq\Sym_4$;
		\item $\Norm_{\PGL_2(\kk)}(\Sym_4)\simeq\Sym_4$;
		\item $\Norm_{\PGL_2(\kk)}(\Alt_5)\simeq\Alt_5$.
	\end{enumerate}
\end{lemma}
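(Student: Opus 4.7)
The strategy is to reduce the problem to Klein's classification (Proposition~\ref{prop: Klein}) by first proving that each normalizer is a \emph{finite} group, then identifying it. Since $\Norm_{\PGL_2(\kk)}(H)$ is an algebraic subgroup of $\PGL_2(\kk)$, this reduction will follow once we control the centralizer $Z(H) := \Centralizer_{\PGL_2(\kk)}(H)$: indeed, $\Norm_{\PGL_2(\kk)}(H)/Z(H)$ embeds into $\Aut(H)$, and for our four finite groups $\Aut(H)$ is finite of manageable order.

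For the centralizer computations, for $H\in\{\Alt_4,\Sym_4,\Alt_5\}$ the embedding into $\PGL_2(\kk)$ lifts to an irreducible two-dimensional representation of the corresponding binary (central) extension. A Schur-type argument on any such lift shows that an element centralizing $H$ in $\PGL_2(\kk)$ must lift to a scalar matrix, hence is trivial in $\PGL_2(\kk)$, so $Z(H)=\{\id\}$. For $H=\Klein_4=\langle\A,\B\rangle$, I would compute $Z(\Klein_4)$ directly: using Theorem~\ref{thm: algebraic subgrops of PGL2}, $\Centralizer_{\PGL_2(\kk)}(\A)$ coincides with $\overline{\Dih}_\infty$, and intersecting with $\Centralizer_{\PGL_2(\kk)}(\B)$ leaves precisely the four elements $\{\I,\A,\B,\A\B\}=\Klein_4$. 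This already gives the numerical bounds $|\Norm(\Klein_4)|\leq 4\cdot 6 = 24$ and $|\Norm(\Alt_5)|\leq 1\cdot 120 = 120$; in particular, all four normalizers are finite.

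Finiteness secured, each normalizer appears in Klein's list, and it remains to pin down the isomorphism type. For (4): no cyclic or dihedral group contains $\Alt_5$, and $\Alt_4,\Sym_4$ are too small, so the only group from Klein's list of order at most $120$ containing $\Alt_5$ is $\Alt_5$ itself. For (1): the candidates are the groups from Klein's list of order at most $24$ that contain $\Klein_4$ as a \emph{normal} subgroup. A short inspection rules out $\Dih_{12}$ (its unique Klein-four subgroup $\langle r^6,s\rangle$ is not normal, since $rsr^{-1}=r^{2}s$); cyclic groups do not contain $\Klein_4$ at all; among the remaining options $\Alt_4,\Sym_4,\Dih_4,\Klein_4$, only $\Sym_4$ attains the upper bound $24$. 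Since all Klein-four subgroups of $\PGL_2(\kk)$ are conjugate (Proposition~\ref{prop: Klein}) and $\Sym_4$ contains a normal copy of $\Klein_4$, the inclusion $\Sym_4\subset\Norm(\Klein_4)$ holds up to conjugacy, giving $\Norm(\Klein_4)=\Sym_4$.

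Cases (2) and (3) then follow by exploiting characteristic subgroups. The group $\Klein_4$ is the derived subgroup (equivalently, the unique Sylow $2$-subgroup) of $\Alt_4$, hence characteristic in $\Alt_4$; this yields $\Norm(\Alt_4)\subset\Norm(\Klein_4)=\Sym_4$, and the reverse inclusion is immediate from $\Alt_4\triangleleft\Sym_4$. Similarly, $\Alt_4$ is the commutator subgroup of $\Sym_4$, hence characteristic, so $\Norm(\Sym_4)\subset\Norm(\Alt_4)=\Sym_4$, with the reverse inclusion trivial. The one genuinely delicate point is the finiteness of $\Norm_{\PGL_2(\kk)}(H)$: without the centralizer computation one might worry that the normalizer absorbs a full maximal torus or Borel subgroup, and an alternative route would be to rule this out directly via Theorem~\ref{thm: algebraic subgrops of PGL2} (noting that $\overline{\mathrm B}$ and $\overline{\Dih}_\infty$ contain only cyclic, respectively cyclic-or-dihedral, finite subgroups, none of which commutes with our $H$).
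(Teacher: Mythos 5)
Your proof is correct, but it reaches finiteness of the normalizer by a different mechanism than the paper. The paper first observes that in all four cases the normalizer contains $\Sym_4$ (resp.\ $\Alt_5$), then notes that $\Norm_{\PGL_2(\kk)}(H)$ is an algebraic subgroup and runs it through the classification of Theorem \ref{thm: algebraic subgrops of PGL2}: it cannot be all of $\PGL_2(\kk)$, and it cannot be the Borel or infinite dihedral subgroup because those are metabelian (Remark \ref{rem: structure of algebraic subgroups of PGL2}) while $\Sym_4$ and $\Alt_5$ are not; what remains is Klein's finite list, which pins down the answer. You instead get finiteness from the $\Norm/\Centralizer\hookrightarrow\Aut(H)$ embedding together with explicit centralizer computations, and then handle $\Alt_4$ and $\Sym_4$ via characteristic subgroups ($\Klein_4$ char.\ in $\Alt_4$, $\Alt_4$ char.\ in $\Sym_4$). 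Both routes are sound; the paper's is shorter given that Theorem \ref{thm: algebraic subgrops of PGL2} is already on the table, while yours is more self-contained and yields the centralizers as a by-product (you even sketch the paper's metabelian argument as your fallback in the last sentence). Two small points to tighten. First, the ``Schur-type argument'' is not automatic: a lift $\tilde g$ of a centralizing element only satisfies $\tilde g\tilde h\tilde g^{-1}=\chi(h)\tilde h$ for some $\{\pm1\}$-valued character $\chi$ of the binary lift, and one must check that the twist $\chi\otimes\rho$ is not isomorphic to $\rho$ before concluding $\tilde g$ is scalar --- this is exactly where the argument fails for $\Klein_4$ (whose lift $\Quat_8$ satisfies $\rho\simeq\chi\otimes\rho$ for every such $\chi$, whence $\Centralizer(\Klein_4)=\Klein_4$), so your decision to treat $\Klein_4$ separately is not optional; for $2.\Alt_4$, $2.\Sym_4$, $2.\Alt_5$ the check does go through. (Alternatively, $\Centralizer(H)\subseteq\Centralizer(\Klein_4)=\Klein_4$ for each of the three larger groups, and no nontrivial element of $\Klein_4$ is central in them.) Second, $\Dih_{12}$ has six Klein four-subgroups, not a unique one --- though indeed none is normal, and that whole inspection is redundant once you know $|\Norm(\Klein_4)|\leqslant 24=|\Sym_4|$.
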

\begin{proof}
	Recall that all finite subgroups of $\PGL_2(\kk)$ are unique up to conjugation. Since the group $\Alt_4$ is normal in $\Sym_4$, then its normalizer contains $\Sym_4$. Similarly, $\Sym_4$ contains a normal copy of $\Klein_4$, hence the normalizer of $\Klein_4$ in $\PGL_2(\kk)$ contains $\Sym_4$. Further, the normalizer of $\Sym_4$ contains $\Sym_4$, while the normalizer of $\Alt_5$ contains $\Alt_5$. Notice that the normalizer of a finite group in $\PGL_2(\kk)$ is an algebraic group, hence we can use their classification provided by Theorem~\ref{thm: algebraic subgrops of PGL2}. In each case, the normalizer is clearly not the whole $\PGL_2(\kk)$, and not the groups of type (3) or (4), as those are metabelian by Remark \ref{rem: structure of algebraic subgroups of PGL2} and hence cannot contain a copy of $\Sym_4$ or $\Alt_5$. The claim follows.
\end{proof}

Next, we show how to conjugate fibred products $H\times_Q H$ to some ``standard forms'' in $\Aut(S)$. Recall that $\Aut(\Alt_5)\simeq\Sym_5$ and one has $\Aut(\Alt_5)/\Inn(\Alt_5)\simeq\Cyc_2$. Any outer automorphism of $\Alt_5$ is a conjugation by an odd permutation in $\Sym_5$. 

\begin{proposition}\label{prop: fibred product standard forms}	
	Let $H$ be one of the groups $\Alt_4$, $\Sym_4$ or $\Alt_5$.
	\begin{enumerate}
		\item\label{standard form A4 S4} Suppose that $H\simeq\Alt_4$ or $H\simeq\Sym_4$. Then every quotient $Q=H/K$ of $H$ defines uniquely the subgroup $K$ and the fibre product $H\times_Q H$, which is conjugate in $\Aut(S)$ to the group
		\begin{equation}\label{eq: fibre product P1xP1}
		H\times_Q H=\{(h_1,h_2)\in H\times H\colon \overline{h}_1=\overline{h}_2\}=\{(h,hk)\colon h\in H, k\in K\}.
		\end{equation}
		In particular, any action of $H$ on $S$ is conjugate to the diagonal one.
		\item\label{standard form A5} Suppose that $H\simeq\Alt_5$ and fix an outer automorphism $\xi\in\Aut(\Alt_5)\setminus\Inn(\Alt_5)$. Then any fibre product $H\times_Q H$ is conjugate in $\Aut(S)$ to one of the following groups:
		\begin{enumerate}
			\item[(a)] $\{(h_1,h_2)\in\Alt_5\times\Alt_5\}$$;$
			\item[(b)] $\{(h,h)\colon h\in\Alt_5\}$, i.e. $\Alt_5$ acts diagonally on $S$$;$
			\item[(c)] $\{(h,\xi(h))\colon h\in\Alt_5\}$.
		\end{enumerate}  
	\end{enumerate}
\end{proposition}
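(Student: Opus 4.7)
The plan is to exploit Goursat's lemma (Lemma~\ref{lem: Goursat}) to parametrize $G = H\times_Q H$ as $\{(g_1,g_2)\in H\times H : \varphi(g_1 K_1) = g_2 K_2\}$ for normal subgroups $K_1, K_2 \trianglelefteq H$ with $H/K_i\simeq Q$ and an isomorphism $\varphi\colon H/K_1\iso H/K_2$, and then to classify this data up to the available conjugations in $\Aut(S) = (\PGL_2(\kk)\times\PGL_2(\kk))\rtimes\langle\swap\rangle$. The key computation I will set up first is: conjugation by $(\id, N, \id)$ with $N\in\Norm_{\PGL_2(\kk)}(H)$ normalizing $K_2$ replaces $\varphi$ by $\iota_N\circ\varphi$, where $\iota_N$ denotes the automorphism of $H/K_2$ induced by $N$. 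Conjugation by $(\id,\id,\swap)$, which will be used only in the $\Alt_5$ case, swaps the two factors and hence sends $\varphi$ to $\varphi^{-1}$.

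For part~(\ref{standard form A4 S4}), I will first observe that in both $\Alt_4$ and $\Sym_4$ the normal subgroups have pairwise distinct indices, so the order $|Q|$ determines $K_1 = K_2 = K$ uniquely (and $\varphi$ is then an automorphism of $H/K$). Using Lemma~\ref{lem: finite normalizers} to identify $\Norm_{\PGL_2(\kk)}(H) = \Sym_4$ in both cases, I will then check case by case that the map $\Sym_4\to\Aut(H/K)$ induced by conjugation is surjective, using the explicit descriptions $\Aut(\Alt_4)\simeq\Sym_4$, $\Aut(\Sym_4) = \Inn(\Sym_4)$, $\Aut(\Cyc_3)\simeq\Cyc_2$, $\Aut(\Sym_3) = \Inn(\Sym_3)$ and $\Aut(\Cyc_2) = \{\id\}$, together with the corresponding natural surjection from $\Sym_4$. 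It follows that every $\varphi$ can be conjugated to the identity, yielding the canonical form~\eqref{eq: fibre product P1xP1}; the final assertion ``every action of $H$ on $S$ is diagonal'' corresponds to $K = \{\id\}$.

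For part~(\ref{standard form A5}), the simplicity of $\Alt_5$ forces $K\in\{\{\id\},\Alt_5\}$. The case $K = \Alt_5$ produces the full direct product, i.e.~case~(a). In the case $K = \{\id\}$, one has $G = \{(h,\varphi(h)) : h\in\Alt_5\}$ with $\varphi\in\Aut(\Alt_5)\simeq\Sym_5$; since $\Norm_{\PGL_2(\kk)}(\Alt_5) = \Alt_5$ by Lemma~\ref{lem: finite normalizers}, the image of the conjugation map into $\Aut(\Alt_5)$ is exactly $\Inn(\Alt_5)$, while the swap only replaces $\varphi$ by $\varphi^{-1}$, which has the same outer class. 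Thus the class of $\varphi$ in $\Out(\Alt_5)\simeq\Cyc_2$ is a complete invariant, yielding exactly the two further conjugacy classes~(b) and~(c).

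The only step that genuinely requires care is the surjectivity verification in part~(\ref{standard form A4 S4}); this is routine but must be performed for each pair $(H,K)$. A prerequisite to even formulating the key computation is that each proper normal $K\trianglelefteq H$ appearing is also normal in $\Norm_{\PGL_2(\kk)}(H) = \Sym_4$ (so that $\iota_N$ descends to $H/K$), which is automatic since the relevant $K$, namely $\Klein_4$ and $\Alt_4$, are characteristic in $\Sym_4$.
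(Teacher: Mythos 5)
Your proof is correct and follows essentially the same route as the paper's: Goursat's lemma reduces everything to an automorphism of $Q=H/K$ (with $K$ pinned down by its index), which is then killed by conjugating the second factor by an element of $\Norm_{\PGL_2(\kk)}(H)$ as identified in Lemma~\ref{lem: finite normalizers}, the only residual ambiguity in the $\Alt_5$ case being the class in $\Out(\Alt_5)\simeq\Cyc_2$. The sole cosmetic difference is that the paper realizes $\Aut(Q)$ via a complement $Q\subset H$ and inner automorphisms of $\Sym_4$, whereas you verify surjectivity of $\Norm_{\PGL_2(\kk)}(H)\to\Aut(H/K)$ directly (and correctly note the needed fact that $K$ is characteristic in $\Sym_4$).
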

\begin{proof}
	Let $K\subseteq H$ be a normal subgroup. The classification of normal subgroups of $H$ implies that the isomorphism type of the quotient $Q=H/K$ determines $K$ uniquely. Vice versa, the isomorphism type of $K$ determines the quotient $Q$. Therefore, by Goursat's Lemma \ref{lem: Goursat}, the fibred product $H\times_Q H$ is uniquely determined by $H$, $Q$ and an automorphism $\delta\in\Aut(Q)$, and in this case
	\[
	H\times_Q H=\{(h_1,h_2)\in H\times H\colon \delta(\overline{h}_1)=\overline{h}_2\}
	\]
	
	(\ref{standard form A4 S4}) Let us show that one can always obtain $\delta=\id$ by conjugating this group in $\Aut(S)$. We may assume that $Q\ne\{\id\}$. Then direct computations show that one has $H=K\rtimes Q$ for some complement $Q$ to $K$ in $H$; we then identify the quotient $Q$ with a subgroup of $H$ and can write $H=q_1K\sqcup\ldots \sqcup q_nK$, where $Q=\{q_1,\ldots,q_n\}\subset H$. More precisely, one of the following holds:
	\begin{enumerate}
		\item[(i)] $H=\Alt_4$, $K=\{\id\}$, $Q=\Alt_4$ and $\Aut(Q)\simeq\Sym_4$.
		\item[(ii)] $H=\Alt_4$, $K=\Klein_4$, $Q=\langle(123)\rangle\simeq\Cyc_3$ and $\Aut(Q)\simeq\Cyc_2$.
		\item[(iii)] $H=\Sym_4$, $K=\{\id\}$, $Q=\Sym_4$ and $\Aut(Q)=\Inn(Q)\simeq\Sym_4$. 
		\item[(iv)] $H=\Sym_4$, $K=\Klein_4$, $Q=\langle(123),(12)\rangle\simeq\Sym_3$ and $\Aut(\Sym_3)\simeq\Inn(\Sym_3)\simeq\Sym_3$.
		\item[(v)] $H=\Sym_4$, $K=\Alt_4$, $Q=\langle (12)\rangle\simeq\Cyc_2$.
	\end{enumerate}
	In particular we see that any automorphism $\delta\in \Aut(Q)$ is a conjugation $g\mapsto hgh^{-1}$ by an element $h\in \Sym_4$ (note that in the case (ii) the only non-trivial automorphism $(123)\mapsto (123)^2=(132)$ is the conjugation by $(12)$). Since every such $h$ corresponds to an automorphism $\alpha_h\in\Aut(\PP^1)$, the automorphism $(h_1,h_2)\mapsto (h_1,\alpha_h^{-1}h_2\alpha_h)$ conjugates $H\times_Q H$ to the fibre product with $\delta=\id$.
	
	(\ref{standard form A5}) Since $\Alt_5$ is simple, then either $Q=\{\id\}$ or $Q=\Alt_5$. In the first case, we get the group~(a). In the second case one has $H\times_QH=\{(h,\delta(h))\in\Alt_5\times\Alt_5\}$ for $\delta\in\Aut(\Alt_5)$. If $\delta\in\Inn(\Alt_5)$, the same argument as in (\ref{standard form A4 S4}) shows that one can conjugate $H\times_Q H$ to the group~(b). Otherwise $\delta=\gamma\circ\xi$ for some $\gamma\in\Inn(\Alt_5)$. Since $\gamma$ corresponds to the conjugation by an automorphism of $\PP^1$, we can conjugate the whole group to (c).
\end{proof}

\begin{proposition}\label{prop: a4 rank 1}
	Assume that $H\simeq\Alt_4$. Then, up to conjugation in $\Aut(S)$, there are the following cases for $G$, and only them.
	\begingroup
	\renewcommand*{\arraystretch}{1.2}
	\begin{longtable}{|l|l|l|l|}
		\hline
		\rowcolor{light-green}
		GAP & Order & Isomorphism class & Generators \\ \hline
		
		$[24,13]$&$24$&$\Alt_4\times\Cyc_2$&$(\A,\A,\id),(\B,\B,\id),(\C,\C,\id),(\I,\I,\swap)$\\ \hline
		$[24,12]$&$24$&$\Sym_4$&$(\A,\A,\id),(\B,\B,\id),(\C,\C,\id),(\D,\D,\swap)$\\ \hline
		$[96,70]$&$96$&$\Cyc_2^4\rtimes\Cyc_6$&$(\A,\I,\id),(\B,\I,\id),(\C,\C,\id),(\I,\I,\swap)$\\ \hline
		$[96,70]$&$96$&$\Cyc_2^4\rtimes\Cyc_6$&$(\A,\I,\id),(\B,\I,\id),(\C,\C,\id),(\I,\C,\swap)$\\ \hline
		$[96,227]$&$96$&$\Cyc_2^2\rtimes\Sym_4$&$(\A,\I,\id),(\B,\I,\id),(\C,\C,\id),(\D,\D,\swap)$\\ \hline
		$[288,1025]$&$288$&$\Alt_4\wr\Cyc_2$& $(\A,\I,\id),(\B,\I,\id),(\C,\I,\id),(\I,\I,\swap)$ \\ \hline
		$[288,1025]$&$288$&$\Alt_4\wr\Cyc_2$& $(\A,\I,\id),(\B,\I,\id),(\C,\I,\id),(\D,\D,\swap)$\\ \hline
	\end{longtable}
	\endgroup
\end{proposition}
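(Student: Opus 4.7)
The strategy is to combine Propositions~\ref{prop: exact sequence p1xp1} and~\ref{prop: fibred product standard forms}, reducing the classification to a finite case analysis parametrised by a choice of normal subgroup $N \trianglelefteq G$ and a lift $\tau$ of the generator of $G/N$. Since every group in the table contains an element of the form $(\M,\N,\swap)$, we are in the case $\rk\Pic(S)^G=1$ of Proposition~\ref{prop: exact sequence p1xp1}, and $G$ sits in a short exact sequence $1\to N\to G\to\Cyc_2\to 1$ with $N=\Alt_4\times_Q\Alt_4$. The normal subgroups of $\Alt_4$ being $\{\id\}$, $\Klein_4$, and $\Alt_4$, Proposition~\ref{prop: fibred product standard forms}(\ref{standard form A4 S4}) lets us bring $N$, up to conjugation in $\Aut(S)$, into one of three standard forms:
\[
N_{12}=\{(h,h):h\in\Alt_4\},\quad N_{48}=\{(h_1,h_2):h_1\Klein_4=h_2\Klein_4\},\quad N_{144}=\Alt_4\times\Alt_4.
\]

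For each such $N$ I classify the lifts $\tau=(\M,\N,\swap)$ modulo multiplication by $N$ and conjugation in $\Norm_{\Aut(S)}(N)$. Two constraints apply: (a) conjugation by $\tau$ must preserve $N$, which by Lemma~\ref{lem: finite normalizers}(2) forces $\M,\N\in\Norm_{\PGL_2(\kk)}(\Alt_4)=\Sym_4$, and in the $N_{48}$ case additionally forces $\epsilon(\M)=\epsilon(\N)$, where $\epsilon\colon\Sym_4\to\{\pm 1\}$ denotes the sign modulo $\Alt_4$, since conjugation by an element of $\Sym_4\setminus\Alt_4$ inverts $\Cyc_3=\Alt_4/\Klein_4$; (b) $\tau^2=(\M\N,\N\M)$ must lie in $N$, which in the $N_{48}$ case translates to $\pi(\M\N)=\pi(\N\M)$ in $\Cyc_3$, and by nonabelianness of $\Sym_3=\Sym_4/\Klein_4$ forces $\M\N\in\Klein_4$ whenever $\epsilon(\M)=-1$. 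Modifying $\tau$ by $N$ replaces $(\M,\N)$ by $(\M h_2,\N h_1)$ for admissible $(h_1,h_2)\in N$, and further conjugation by diagonal elements $(g,g,\id)$ with $g\in\Sym_4$ conjugates $(\M,\N)$ simultaneously by $g$.

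In case $N=N_{12}$, the centralizer of $\Alt_4$ in $\PGL_2(\kk)$ is trivial (the only algebraic subgroup containing $\Alt_4$ is $\Sym_4$, $\Alt_5$, or $\PGL_2$, and the last two cannot centralize $\Alt_4$), so $\M=\N$; the two sign classes $\M\in\Alt_4$ or $\M\in\Sym_4\setminus\Alt_4$ reduce to $\M=\I$ or $\M=\D$, giving $\Alt_4\times\Cyc_2$ and $\Sym_4$ (rows~1--2). In case $N=N_{144}$, the pair $(\M,\N)$ modulo $\Alt_4$ reduces to $(\I,\I)$ or $(\D,\D)$, producing two presentations of $\Alt_4\wr\Cyc_2$ (rows~6--7). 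The delicate case $N=N_{48}$ is where the bulk of the work lies: enumerating by the common value of $\epsilon(\M)=\epsilon(\N)$, when $\epsilon=+1$ the invariant $\pi(\M)\pi(\N)^{-1}\in\Cyc_3$ modulo the $\pm 1$-action of $\Sym_3$ by conjugation produces two classes $\tau=(\I,\I,\swap)$ and $\tau=(\I,\C,\swap)$, both of isomorphism type $\Cyc_2^4\rtimes\Cyc_6$ (rows~3--4); when $\epsilon=-1$, the forced condition $\M\N\in\Klein_4$ reduces $(\M,\N)$ to the single representative $(\D,\D,\swap)$ of type $\Cyc_2^2\rtimes\Sym_4$ (row~5).

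The main obstacle is the $N_{48}$ analysis, where the interplay between the $\epsilon$-constraint, the $\pi$-constraint arising from $\tau^2\in N$, and the restricted set of admissible $N$-modifications requires careful bookkeeping to verify that the listed classes exhaust all possibilities without collapsing. Once the case analysis is complete, the isomorphism types of $G$ can be read off from the explicit generators, or confirmed by a short GAP computation using the listed matrix presentations, which also produces the indicated GAP IDs.
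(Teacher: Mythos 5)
Your proposal is correct and follows essentially the same route as the paper: reduce to the three standard forms of $H\times_Q H$ via Proposition~\ref{prop: fibred product standard forms}, then classify the lifts $(\M,\N,\swap)$ using Lemma~\ref{lem: finite normalizers}, the normalization condition, and the constraint $\tau^2\in H\times_Q H$; your sign/invariant bookkeeping in the $N_{48}$ case is just a repackaging of the paper's explicit reductions and exclusion of $(\D,\C\D,\swap)$, $(\D,\C^2\D,\swap)$ by computing squares. One small wording fix: the triviality of the centralizer of $\Alt_4$ should be justified by $\Centralizer_{\PGL_2(\kk)}(\Alt_4)\subseteq\Norm_{\PGL_2(\kk)}(\Alt_4)\simeq\Sym_4$ together with $\Centralizer_{\Sym_4}(\Alt_4)=\{\id\}$, rather than by listing algebraic subgroups \emph{containing} $\Alt_4$ (the centralizer of a nonabelian group does not contain it).
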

\begin{proof}
	Any subgroup of $\PGL_2(\kk)$ isomorphic to $\Alt_4$ is conjugate to the group $H$ generated by the matrices $A,B,$ and $C$, see Section \ref{subsec: PGL2}. We then apply Proposition \ref{prop: fibred product standard forms}. Note that $K=\Klein_4\triangleleft\Alt_4$ is generated by $A$ and $B$. Then, because of \eqref{eq: fibre product P1xP1}, we have one of the following possibilities for $H\times_Q H$:
	\begin{description}
		\item[$(1)\ K=\{\id\}$]\ $\langle (\A,\A),(\B,\B),(\C,\C)\rangle$;
		\item[$(2)\ K=\Klein_4$] $\langle (\A,\I),(\B,\I),(\I,\A),(\I,\B),(\C,\C)\rangle $
		\item[$(3)\ K=\Alt_4$] $\langle; (\A,\I),(\B,\I),(\C,\I),(\I,\A),(\I,\B),(\I,\C)\rangle $.
	\end{description}
	It remains to identify an element $g\in G$ such that $\psi(g)=\swap$. It is of the form $(\M,\N,\swap)\in\PGL_2(\kk)^2\rtimes\Cyc_2,$ and normalizes $H\times_QH$. Since $(\M,\N,\swap)^{-1}=(\N^{-1},\M^{-1},\swap)$, then for any $\P\in H$ we get that
	\begin{equation}\label{eq: normalization}
		(\N^{-1},\M^{-1},\swap)(\P,\P,\id)(\M,\N,\swap)=(\N^{-1}\P\N,\M^{-1}\P\M,\id)
	\end{equation}
	is an element of $H\times_Q H\subset H\times H$, which implies that $\M$ and $\N$ normalize $H$ in $\PGL_2(\kk)$. By Lemma \ref{lem: finite normalizers}, the normalizer of $H$ in $\PGL_2(\kk)$ is the group isomorphic to $\Sym_4$ which contains~$H$. Consider three cases.
	
	Suppose that $H\times_QH$ is of the form (1) among the three above possibilities. If $\M\in H$, then, up to a multiplication by an element of $H\times_QH$, we may assume that $g=(\I,\N,\swap)$. Therefore, (\ref{eq: normalization}) implies $\N^{-1}\P\N=\P$ for all $\P\in H$. But the centre of $\Alt_4$ is trivial, hence $\N=\I$. If $M\notin H$, then (\ref{eq: normalization}) implies that $\N^{-1}\P\N=\M^{-1}\P\M$ for all $\P\in H$, hence $\M\N^{-1}\in\Centralizer_{\Sym_4}(\Alt_4)=\{\id\}$, and we conclude that $\M=\N$, $g=(\M,\M,\sigma)$. Note that $\M=\T\D$ for $\T\in H$ and $\D$ the matrix introduced in Section \ref{subsec: PGL2} (whose coset generates $\Sym_4/\Alt_4$). Multiplying further $g$ by $(\T^{-1},\T^{-1},\id)$, we achieve $g=(\D,\D\,\swap)$.
	
	Now suppose that $H\times_QH$ is of the form (2). As before, if $\M\in H$, then we may assume $\M=\I$. The condition (\ref{eq: normalization}) gives that $(\N^{-1}\P\N,\P)$ is an element of $H\times_Q H$ for any $\P\in H$, hence $[\N,\P]\in K$ for all $\P\in H$. This implies $\N\in\Alt_4$, hence $\N=\T$, $\N=\T\C$ or $\N=\T\C^2$, where $\T\in\Klein_4$. Identifying $\langle (\I,\A),(\I,\B)\rangle$ with $\Klein_4$, we may further multiply $g$ by an element of this group to get $g=(\I,\I,\swap)$, $g=(\I,\C,\swap)$ or $g=(\I,\C^2,\swap)$, respectively. But we can exclude $g=(\I,\C^2,\swap)$, because is the same as $g=(\I,\C,\swap)$, up to conjugation by $(\D,\D,\id)$, which normalizes $H\times_QH$.
	
	Now, if $\M\notin H$, then (\ref{eq: normalization}) implies that $(\N^{-1}\P\N,\M^{-1}\P\M,\id)$ is an element of $H\times_Q H$ for all $\P\in H$, which is equivalent to $\N^{-1}\P^{-1}\N\M^{-1}\P\M$ being an element of $K$ for all $\P\in H$. But this holds if and only if $[\P,\M\N^{-1}]\in\N K\N^{-1}=K$ for all $\P\in H$. As above, we conclude that $\M\N^{-1}\in\Alt_4$ and hence $g=(\M,\T\M,\swap)$, where $\T\in\Alt_4$. As above, we can multiply $g$ by an element of $\langle (\A,\I),(\B,\I),(\I,\A),(\I,\B)\rangle$ to get $g=(\M,\M,\swap)$, $g=(\M,\C\M,\swap)$ or $g=(\M,\C^2\M,\swap)$. Since $\M=\D\T'$ for some $\T'\in H$, then with a further multiplication by $({\T'}^{-1},{\T'}^{-1},\id)\in H\times_Q H$, we achive $g\in\{(\D,\D,\swap),(\D,\C\D,\swap),(\D,\C^2\D,\swap)\}$. But we can exclude $g=(\D,\C\D,\swap)$ and $g=(\D,\C^2\D,\swap)$, because the squares of those elements are respectively $(\C^2,\C,\id)$ and $(\C,\C^2,\id)$, which are not in $H\times_QH=\langle (\A,\I),(\B,\I),(\I,\A),(\I,\B),(\C,\C)\rangle$.
	
	Finally, assume that $H\times_QH$ is of type (3). Note that $g^2=(\M\N,\N\M,\id)\in H\times H$, therefore $\N=\M^{-1}\T$, where $\T\in\Alt_4$. Multiplying $g$ by $(\I,\T^{-1},\id)$, we may assume $g=(\M,\M^{-1},\swap)$. Finally, since $\M=\T'$ or $\M=\T'\D$ for $\T'\in\Alt_4$, multiplication by an element of $H\times H$ gives $g=(\I,\I,\swap)$ or $g=(\D,\D,\swap)$, respectively.
\end{proof}

\begin{remark}
	Some generators of $G$, obtained during the proofs of Propositions \ref{prop: a4 rank 1}, \ref{prop: s4 rank 1}, may be redundant. This is taken into account in the \emph{Generators} column of the corresponding tables. For example, as soon as $G$ contains $(\I,\I,\swap)$, the group $G$ also contains $(\I,\I,\swap)(\M,\N,\id)(\I,\I,\swap)=(\N,\M,\id)$ for each $(\M,\N,\id)\in G$. Similarly, suppose that we are in case (2) of the above proof, and $G\simeq\Cyc_2^2\rtimes\Sym_4$ is generated by $(\A,\I,\id)$, $(\B,\I,\id)$, $(\I,\A,\id)$, $(\I,\B,\id)$, $(\C,\C,\id)$ and $(\D,\D,\swap)$. The relations in $\Sym_4$ then yield $(\I,\B,\id)=(\I,\D\B\D,\id)=(\D,\D,\swap)^{-1}(\B,\I,\id)(\D,\D,\swap)$, and $(\I,\A,\id)=[(\D,\D,\swap)^{-1}(\A,\I,\id)(\D,\D,\swap)]\cdot (\I,\B,\id)^{-1}$. We leave other cases to the reader. 
\end{remark}

\begin{proposition}\label{prop: s4 rank 1}
	Assume that $H\simeq\Sym_4$. Then, up to conjugation in $\Aut(S)$, there are the following cases for $G$, and only them.
	\begingroup
	\renewcommand*{\arraystretch}{1.2}
	\begin{longtable}{|l|l|l|l|}
		\hline
		\rowcolor{light-green}
		GAP & Order & Isomorphism class & Generators \\ \hline
		
		$[48,48]$&$48$&$\Sym_4\times\Cyc_2$&$(\A,\A,\id),(\B,\B,\id),(\C,\C,\id),(\D,\D,\id),(\I,\I,\swap)$\\ \hline
		$[192,955]$&$192$&$\Cyc_2^4\rtimes\Dih_6$& $(\A,\I,\id),(\B,\I,\id),(\C,\C,\id),(\D,\D,\id),(\I,\I,\swap)$\\ \hline
		$[576,8654]$&$576$&$\Alt_4^2\rtimes\Cyc_2^2$& $(\A,\I,\id),(\B,\I,\id),(\C,\I,\id),(\D,\D,\id),(\I,\I,\swap)$\\ \hline
		$[576,8652]$&$576$&$\Alt_4^2\rtimes\Cyc_4$& $(\A,\I,\id),(\B,\I,\id),(\C,\I,\id),(\D,\D,\id),(\I,\D,\swap)$\\ \hline
		$[1152,157849]$&$1152$&$\Sym_4\wr\Cyc_2$&$(\A,\I,\id),(\B,\I,\id),(\C,\I,\id),(\D,\I,\id),(\I,\I,\swap)$\\ \hline
	\end{longtable}
	\endgroup
\end{proposition}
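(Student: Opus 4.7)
The plan is to follow the proof of Proposition~\ref{prop: a4 rank 1} almost verbatim, replacing $H \simeq \Alt_4$ with $H \simeq \Sym_4$. The normal subgroups of $\Sym_4$ are $\{\id\}$, $\Klein_4$, $\Alt_4$ and $\Sym_4$, with quotients $Q \simeq \Sym_4$, $\Sym_3$, $\Cyc_2$ and $\{\id\}$, respectively. For each such $K$, Proposition~\ref{prop: fibred product standard forms}(\ref{standard form A4 S4}) supplies a standard model for the kernel $\ker\psi|_G = H \times_Q H$ via formula~\eqref{eq: fibre product P1xP1}: its generators are the elements of $K \times K$ together with diagonal lifts $(h,h)$ of a transversal to $K$ in $H$, matching the first four entries of the generator column in each row of the table.

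Next I would determine the element $g = (\M,\N,\swap)$ generating the remaining coset. The normalization identity~\eqref{eq: normalization} forces $\M,\N \in \Norm_{\PGL_2(\kk)}(\Sym_4) = \Sym_4$, by Lemma~\ref{lem: finite normalizers}, and a direct check on the diagonal generators of $H \times_Q H$ shows that $\overline{\M\N^{-1}} \in K$, using that $Q$ has trivial centre for $Q \in \{\Sym_4,\Sym_3,\Cyc_2\}$. Multiplying $g$ on the left by a suitable element of $K \times K \subset H \times_Q H$ and then by a diagonal element reduces $(\M,\N)$ to a pair of coset representatives for $K$ in $\Sym_4$. In the cases $K = \{\id\}$, $K = \Klein_4$ and $K = \Sym_4$, this reduction, together with conjugation in $\Norm_{\Aut(S)}(H \times_Q H)$, leaves the single orbit $g = (\I,\I,\swap)$, producing the groups $\Sym_4 \times \Cyc_2$, $\Cyc_2^4 \rtimes \Dih_6$ and $\Sym_4 \wr \Cyc_2$, respectively.

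The main obstacle, and the only case producing two inequivalent groups, is $K = \Alt_4$. Here the initial reduction leaves four candidates $(\I,\I,\swap), (\I,\D,\swap), (\D,\I,\swap), (\D,\D,\swap)$, corresponding to the two cosets of $\Alt_4$ in each of the two factors. Conjugation by $(\D,\I,\id)$, which normalizes $H\times_Q H$ since $\D \in \Sym_4 = \Norm_{\PGL_2(\kk)}(\Alt_4)$, identifies $(\I,\I,\swap)$ with $(\D,\D,\swap)$, while conjugation by $(\I,\I,\swap)$ (which normalizes the fibre product since the latter is symmetric in its two factors) identifies $(\I,\D,\swap)$ with $(\D,\I,\swap)$, leaving precisely two orbits. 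These are genuinely distinct: for $g = (\I,\I,\swap)$ one has $g^2 = \id$ and $G \simeq (H \times_Q H) \rtimes \Cyc_2 \simeq \Alt_4^2 \rtimes \Cyc_2^2$, whereas for $g = (\I,\D,\swap)$ one computes $g^2 = (\D,\D,\id) \in H \times_Q H$ of order $2$, so $g$ has order $4$ and $G \simeq \Alt_4^2 \rtimes \Cyc_4$ is a non-split extension. The GAP identifications of all five resulting groups from the explicit generators are then a routine check.
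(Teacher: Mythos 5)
Your plan follows the paper's proof essentially verbatim: enumerate the four possible kernels $H\times_Q H$ via Proposition~\ref{prop: fibred product standard forms}, constrain $\M,\N$ by $\Norm_{\PGL_2(\kk)}(\Sym_4)=\Sym_4$ (Lemma~\ref{lem: finite normalizers}), reduce $g$ modulo $H\times_Q H$, and single out $K=\Alt_4$ as the case yielding two classes, distinguished by whether $\swap$ lifts to an involution. One misstatement to fix: the normalization condition~\eqref{eq: normalization} gives $\overline{\M\N^{-1}}\in\Center(Q)$, and $\Cyc_2$ does \emph{not} have trivial centre, so for $K=\Alt_4$ you get no constraint at all on $\M\N^{-1}$ --- which is precisely why that case produces two classes, as your own third paragraph then correctly works out; just drop $\Cyc_2$ from the list of centreless quotients. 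Also, "non-split extension" should refer to the extension of $\Cyc_2$ by $H\times_Q H$ (the group itself is the split product $\Alt_4^2\rtimes\Cyc_4$); with that reading your order-of-$g$ argument validly separates the two $576$-element groups.
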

\begin{proof}
	First, we apply Proposition \ref{prop: fibred product standard forms}. Proper normal subgroups of $\Sym_4$ are $\Alt_4$ and $\Klein_4$, hence the only possibilities for $H\times_QH$ are the following:
	\begin{description}
		\item[$(1)\ K=\{\id\}$]\ $\langle (\A,\A),(\B,\B),(\C,\C),(\D,\D)\rangle $;
		\item[$(2)\ K=\Klein_4$] $\langle (\A,\I),(\B,\I),(\I,\A),(\I,\B),(\C,\C),(\D,\D)\rangle $;
		\item[$(3)\ K=\Alt_4$] $\langle (\A,\I),(\B,\I),(\C,\I),(\I,\A),(\I,\B),(\I,\C),(\D,\D)\rangle $;
		\item[$(4)\ K=\Sym_4$]\ $\langle (\A,\I),(\B,\I),(\C,\I),(\D,\I),(\I,\A),(\I,\B),(\I,\C),(\I,\D)\rangle$.
	\end{description}
	We now look for $g=(\M,\N,\swap)$ such that $G$ is generated by $H\times_QH$ and $h$, i.e. $\psi(g)=\swap$. By Lemma \ref{lem: finite normalizers}, the normalizer of $H$ in $\PGL_2(\kk)$ is $H$ itself. Therefore, $\M,\N\in H$. Up to a multiplication by an element of $H\times_QH$, we may assume that $\M=\I$ and hence $g=(\I,\N,\swap)$. We again argue case by case.
	
	If $H\times_Q H$ is of type (1), then the normalization condition (\ref{eq: normalization}) implies that $\N\in\Center(\Sym_4)=\{\id\}$. If $H\times_Q H$ is of type (2), then $[\N,\P]\in K$ for all $\P\in H$, which implies $\N\in\Klein_4$. Multiplying $g$ by an element of $\langle (\I,\A),(\I,\B)\rangle\simeq\Klein_4$, we get $g=(\I,\I,\swap)$. In the case (3), recall that the quotient of $\Sym_4$ by $\langle \A,\B,\C\rangle\simeq\Alt_4$ is generated by the coset of $\D$. Therefore, multiplying $g$ by an element of $\langle (\I,\A),(\I,\B),(\I,\C)\rangle$, we achieve $g=(\I,\I,\swap)$ or $g=(\I,\D,\swap)$. Finally, in the case (4), we can always get $g=(\I,\I,\swap)$.
\end{proof}

\begin{proposition}\label{prop: a5 rank 1}
	Assume that $H\simeq\Alt_5$. Then, up to conjugation in $\Aut(S)$, there are the following cases for $G$, and only them.
	\begingroup
	\renewcommand*{\arraystretch}{1.2}
	\begin{longtable}{|l|l|l|l|}
		\hline
		\rowcolor{light-green}
		GAP & Order & Isomorphism class & Generators \\ \hline
		
		$[120,35]$&$120$&$\Alt_5\times\Cyc_2$&$(\E,\E,\id),(\F,\F,\id),(\I,\I,\swap)$\\ \hline
		$[120,34]$&$120$&$\Sym_5$&$(\E,\xi(E),\id),(\I,\I,\swap)$\\ \hline
		\textup{No id} &$7200$&$\Alt_5\wr\Cyc_2$&$(\I,\E,\id),(\I,\F,\id),(\E,\I,\id),(\F,\I,\id),(\I,\I,\swap)$\\ \hline
	\end{longtable}
	\endgroup
\end{proposition}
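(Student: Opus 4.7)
The strategy mirrors the proofs of Propositions~\ref{prop: a4 rank 1} and~\ref{prop: s4 rank 1}. The starting point is Proposition~\ref{prop: fibred product standard forms}(\ref{standard form A5}), which, up to conjugation in $\Aut(S)$, puts the subgroup $K:=H\times_Q H$ of $G$ into one of three standard forms: the full product $\Alt_5\times\Alt_5$, the diagonal $\Delta=\{(h,h)\colon h\in\Alt_5\}$, or the twisted diagonal $\Delta_\xi=\{(h,\xi(h))\colon h\in\Alt_5\}$, where $\xi$ is a fixed outer automorphism of $\Alt_5$. In each standard form I then seek an element $g=(\M,\N,\swap)\in G$ that, together with $K$, generates $G$; the normalization identity~\eqref{eq: normalization} forces $\M,\N\in\Norm_{\PGL_2(\kk)}(\Alt_5)$, and Lemma~\ref{lem: finite normalizers} immediately gives $\M,\N\in\Alt_5$.

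In the case $K=\Alt_5\times\Alt_5$, the abundance of elements in $K$ lets me multiply $g$ on the left by $(\N^{-1},\M^{-1},\id)\in K$ to reduce to $g=(\I,\I,\swap)$, giving $G\simeq\Alt_5\wr\Cyc_2$. In the diagonal case $K=\Delta$, applying~\eqref{eq: normalization} to an arbitrary $(\P,\P,\id)\in\Delta$ yields the identity $\M^{-1}\P\M=\N^{-1}\P\N$ for all $\P\in\Alt_5$, so that $\M\N^{-1}\in\Center(\Alt_5)=\{\id\}$, i.e.\ $\M=\N$; a further multiplication by $(\M^{-1},\M^{-1},\id)\in\Delta$ brings $g$ to $(\I,\I,\swap)$, which commutes with $\Delta$, so $G\simeq\Alt_5\times\Cyc_2$.

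The twisted diagonal is the case I expect to be the main obstacle, because $\xi$ is not implemented by any conjugation inside $\PGL_2(\kk)$ (again by Lemma~\ref{lem: finite normalizers}), so the verification has to proceed at the level of the abstract automorphism rather than by a matrix conjugation. Applying~\eqref{eq: normalization} to $(\P,\xi(\P),\id)\in\Delta_\xi$ and using the defining condition of $\Delta_\xi$ together with $\xi^2=\id$ gives $\xi(\N^{-1})\,\P\,\xi(\N)=\M^{-1}\P\M$ for all $\P\in\Alt_5$. Triviality of $\Center(\Alt_5)$ then forces $\M=\xi(\N)$, and a left multiplication by $(\N^{-1},\xi(\N^{-1}),\id)\in\Delta_\xi$ reduces $g$ to $(\I,\I,\swap)$. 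Finally the identity
\[
(\I,\I,\swap)(h,\xi(h),\id)(\I,\I,\swap)=(\xi(h),h,\id)=(\xi(h),\xi(\xi(h)),\id)\in\Delta_\xi
\]
shows that conjugation by $\swap$ induces exactly $\xi$ on $\Delta_\xi\simeq\Alt_5$, so $G\simeq\Alt_5\rtimes_\xi\Cyc_2\simeq\Sym_5$. Assembling the three cases gives the three isomorphism classes in the table, and tracking the chosen reductions produces the explicit matrix generators displayed there.
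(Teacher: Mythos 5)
Your proof is correct and follows essentially the same route as the paper's: reduce $H\times_Q H$ to one of the three standard forms via Proposition~\ref{prop: fibred product standard forms}(\ref{standard form A5}), use the normalization condition~\eqref{eq: normalization} and Lemma~\ref{lem: finite normalizers} to force $\M,\N\in\Alt_5$, and then normalize $g$ to $(\I,\I,\swap)$ in each case (the paper sets $\M=\I$ first in the twisted case and then deduces $\N=\I$, whereas you derive $\M=\xi(\N)$ directly, but the content is identical). The only nit is that the element $(\N^{-1},\xi(\N^{-1}),\id)$ must multiply $g$ on the \emph{right} to produce $(\I,\I,\swap)$; since $H\times_Q H$ is normal in $G$ the two cosets agree, so the conclusion is unaffected.
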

\begin{proof}
	We again let $g=(\M,\N,\swap)\in G$ be an element such that $\psi(g)=\sigma$. Since $g$ normalizes $H\times_Q H$, we get that $\M$ and $\N$ normalize $H$ in $\PGL_2(\kk)$, hence $\M,\N\in H$ by Lemma~\ref{lem: finite normalizers}. We now apply Proposition~\ref{prop: fibred product standard forms}(\ref{standard form A5}). If $H\times_Q H$ is of type (a), then one can multiply $g$ by an element of $H\times_Q H$ to get $g=(\I,\I,\swap)$. In the case (b), the normalization condition (\ref{eq: normalization}) implies $\M\N^{-1}\in\Centralizer_{\Alt_5}(\Alt_5)=\id$, hence $\M=\N$ and we again can replace $g=(\M,\M,\id)$ by $(\I,\I,\swap)$. Finally, assume that we are in case (c), i.e. $H\times_Q H=\{(\P,\xi(\P))\colon \P\in H)\}$. Multiplying $g$ by $(\M^{-1},\xi(\M^{-1}),\id)$, we may assume $\M=\I$. Since for each $\P\in H$, one has
	\[
	(\I,\N,\sigma)^{-1}(\P,\xi(\P),\id)(\I,\N,\sigma)=(\N^{-1}\xi(\P)\N^{-1},\P,\id)\in H\times_Q H,
	\]
	we get $\P=\xi(\N^{-1}\xi(\P)\N)$. Since $\xi$ is of order 2, then we get that $\N\in\Center(\Alt_5)=\{\id\}$.
\end{proof}

\begin{proof}[Proof of Theorem \ref{thm: groups acting on p1xp1}]
	If $\Pic(S)^G\simeq\ZZ^2$, then $G$ is a subgroup of $H_1\times H_2$, where $H_1$ and $H_2$ are finite subgroups of $\PGL_2(\kk)$ acting fibrewisely on $\PP^1\times\PP^1$. Clearly, we can consider the direct products of cyclic and dihedral groups as subgroups of a group of type $\Dih_n\wr\Cyc_2$ from the Theorem, for a suitable $n$. Furthermore, $\Cyc_n\times\Alt_4$ embeds into $\Dih_n\times\Sym_4$, while $\Cyc_n\times\Alt_5$ embeds into $\Dih_n\times\Alt_5$.
	
	Now suppose that $\Pic(S)^G\simeq\ZZ$ and hence $G$ fits the exact sequence \eqref{eq: P1xP1 Picard 1 ses}. If $H$ is isomorphic to $\Alt_4,\Sym_4$, or $\Alt_5$, then the result follows from Propositions \ref{prop: a4 rank 1}, \ref{prop: s4 rank 1} and \ref{prop: a5 rank 1}. Let $g=(\M,\N,\swap)\in G$ be an element such that $\psi(g)=\swap$. Then $G$ is generated by $H\times_Q H$ and $g$. In particular, $G$ is a subgroup of $\widehat{G}=\langle H\times H,g\rangle$. 
		
	Assume that $H$ is cyclic. Then, up to conjugation in $\PGL_2(\kk)$, the group $H$ is generated by $\mathrm{R}_n$. Since $g$ normalizes $H\times_QH$, we deduce that $\M$ and $\N$ are diagonal or anti-diagonal. On the other hand, $g^2\in H\times_Q H$, i.e. the matrices $\M\N$ and $\N\M$ are diagonal, hence either $\M$ and $\N$ are both diagonal, or both anti-diagonal. Let $(u,v)$ be the affine coordinates on $\PP^1\times\PP^1$, and let $\sigma_0$ be the automorphism $(u,v)\mapsto (v,u)$, i.e. $(\I,\I,\swap)$. Then $g$ is either of the form $g\colon (u,v)\mapsto (av,bu)$, or of the form $g\colon (u,v)\mapsto (av^{-1},bu^{-1})$ for some $a,b\in\kk^*$.
	
	In the first case, the automorphism $\alpha\colon (u,v)\mapsto (u,a^{-1}v)$ commutes with $H\times H$ and conjugates $g$ to $g'\colon (u,v)\mapsto (v,cu)$, where $c=ab$. The group $\alpha^{-1}\widehat{G}\alpha=\langle H\times H,g'\rangle$ contains an isomorphic copy of $G$. Since the automorphism ${g'}^2\colon (u,v)\mapsto (cu,cv)$ belongs to $H\times H$, we get $c=\omega_n^k$ for some $k\in\ZZ$.  Multiplying $g'$ by an automorphism $(u,v)\mapsto (u,c^{-1}v)$, which belongs to $H\times H$, we find that $\alpha^{-1}\widehat{G}\alpha$ is generated by $H\times H$ and $\sigma_0$; in particular, it is isomorphic to $\Cyc_n\wr \Cyc_2$.
	
	In the second case, the automorphism $\beta\colon (u,v)\mapsto (u,v^{-1})$ normalizes $H\times H$ and conjugates $g$ to $g'\colon (u,v)\mapsto (av,b^{-1}u)$. Then $\beta^{-1}\widehat{G}\beta=\langle H\times H,g'\rangle$ contains a copy of $G$, and one repeats the argument from the previous case.
	
	Assume that $H\simeq\Dih_n$. Then $H$ is conjugated in $\PGL_2(\kk)$ to the group generated by $\Rot_n$ and $\B$. We may assume $n\geqslant 3$, since for $H\simeq \Klein_4$ the group $G$ is conjugate to a subgroup of~$\Sym_4\wr\Cyc_2$. Let $\T\in H$ be any element, then there is $\P\in H$ so that $(\T,\P,\id)\in H\times_Q H$. One has 
	\begin{equation}
		(\N^{-1},\M^{-1},\swap)(\T,\P,\id)(\M,\N,\swap)=(\N^{-1}\P\N,\M^{-1}\T\M,\id)\in H\times H,
	\end{equation}
	and hence $\M$ normalizes $H$ in $\PGL_2(\kk)$. Now $C=\langle \mathrm{R}_n\rangle$ is a characteristic subgroup of~$H$ for $n\geqslant 3$, hence it is invariant under the conjugation by $\M$, and therefore $\M$ is either diagonal or anti-diagonal. The same obviously holds for $\N$. Now, by multiplying $g$ by an element of $H\times H$, we may assume that $\M$ and $\N$ are either both diagonal, or both anti-diagonal. We finish the proof by applying the same process as in the cyclic case. 
\end{proof}

\begin{remark}
	Our computations exhibit many non-conjugate embeddings of various isomorphic groups into $\Aut(\PP^1\times\PP^1)$, e.g. of some semi-direct products in~Propositions~\ref{prop: a4 rank 1}, \ref{prop: s4 rank 1}.
\end{remark}

\section{Linearization of finite groups acting on Hirzebruch surfaces}\label{sec: Hirzebruch}

In the final section we study linearization of finite groups acting on Hirzebruch surfaces~$\FF_n$ with $n\geqslant 0$. This section is divided into three parts. In the first part, we study Hirzebruch surfaces $\FF_n$ with $n\geqslant 1$. In the second and third parts, we investigate the linearization of groups acting on the quadric surface $\FF_0\simeq\PP^1\times\PP^1$, according to the $G$-invariant Picard rank. 

\subsection{$G$-Hirzebruch surfaces}\label{sec: G-Hirzebruch}

The goal of this section is to prove the following linearization criterion for Hirzebruch surfaces~$\FF_n$ with $n\geqslant 1$.

\begin{theorem}\label{thm: hirzebruch answer}
	Let $n\geqslant 1$ be an integer and $G\subset\Aut(\FF_n)$ be a finite group acting on~ $\FF_n$, such that $\pi\colon \FF_n\to\PP^1$ is a $G$-conic bundle. Denote by $\widehat{G}$ the image of $G$ in the automorphism group of the base $\Aut(\PP^1)\simeq\PGL_2(\kk)$. Then $G$ is linearizable if and only if one of the following holds:
	\begin{enumerate}
		\item $n$ is odd;
		\item $\widehat{G}$ is cyclic;
		\item $\widehat{G}$ is isomorphic to $\Dih_m$ with $m\in\ZZ$ odd.
	\end{enumerate}
\end{theorem}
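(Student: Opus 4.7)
The plan is to classify the $G$-equivariant Sarkisov links out of $\pi\colon\FF_n\to\PP^1$ and track how the Hirzebruch index evolves, extracting a parity obstruction in the non-linearizable cases. First I would record a structural observation: for $n\geqslant 1$ the negative section $s\subset\FF_n$ is $G$-invariant, and the kernel $K=\ker(G\to\widehat G)$ fixes $s$ pointwise (the induced isomorphism $s\iso\PP^1$ is $G$-equivariant while $K$ maps to the identity in $\widehat G$). Consequently, on each fibre $F_b\simeq\PP^1$ both $K$ and the stabilizer $G_b$ fix the point $s\cap F_b$ and embed in the affine group of $F_b$ minus that point; being finite, they are cyclic, and $G_b$ has a unique second fixed point $p_b\in F_b\setminus s$.

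From each $\widehat G$-orbit $O\subset\PP^1$ of size $k$ I would then construct two $G$-equivariant Sarkisov links of type~$\II$: an \emph{upward} link $\FF_n\dashrightarrow\FF_{n+k}$ obtained by blowing up the $G$-orbit $s\cap\pi^{-1}(O)$ and contracting the strict transforms of the fibres over $O$, and a \emph{downward} link $\FF_n\dashrightarrow\FF_{n-k}$ (valid when $n\geqslant k$) built analogously from the $G$-orbit $\{p_b\}_{b\in O}$ lying off $s$. Iterating, the set of Hirzebruch indices reachable from $\FF_n$ by chains of such links is exactly $\{m\in\ZZ_{\geqslant 0}\colon m\equiv n\pmod d\}$, where $d=\gcd$ of the $\widehat G$-orbit sizes on $\PP^1$.

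For \textbf{sufficiency}, in each of the three listed situations I would exhibit an explicit chain ending at $\FF_1$, from which the $G$-equivariant contraction $\FF_1\to\PP^2$ of the negative section linearizes $G$. If $\widehat G$ is cyclic, a fixed point on $\PP^1$ provides a size-$1$ orbit, and one iterates the downward link to descend $\FF_n\to\FF_{n-1}\to\cdots\to\FF_1$. If $\widehat G\cong\Dih_m$ with $m\geqslant 3$ odd and $n$ is even, an upward link along the size-$m$ orbit first lands us at the odd index $n+m$, from where size-$2$ downward links reach $\FF_1$. If $n$ is already odd, then $n\equiv 1\pmod 2$ and, since $d\in\{1,2\}$ always by Proposition~\ref{prop: Klein orbits}, one uses Bezout combinations of the available orbit sizes (going upward first when needed to avoid negative indices) to reach $\FF_1$.

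For \textbf{necessity}, the main obstacle is to show that $n\bmod d$ is invariant under \emph{any} $G$-birational transformation and not just under type~$\II$ links. By the Sarkisov program, a hypothetical $G$-birational map $\FF_n\dashrightarrow\PP^2$ factors into Sarkisov $G$-links. Among Hirzebruch surfaces the only one carrying a $(-1)$-curve is $\FF_1$ (its negative section), so type~$\III$ contractions to the del Pezzo $\PP^2$ can only occur from $\FF_1$; type~$\IV$ swaps require two $G$-equivariant ruling structures, which arise only on $\FF_0$, where, because $G$ preserves one ruling by hypothesis, no element of $G$ can swap rulings, so $G$ preserves both rulings and the swap is a $G$-automorphism that leaves the index unchanged. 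Hence any such chain must pass through $\FF_1$, forcing $n\equiv 1\pmod d$. A case-by-case reading of Proposition~\ref{prop: Klein orbits} gives $d=1$ precisely when $\widehat G$ is cyclic or $\Dih_m$ with $m$ odd, and $d=2$ otherwise; in the latter case with $n$ even we have $n\not\equiv 1\pmod 2$, so $G$ is not linearizable.
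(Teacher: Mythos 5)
Your strategy coincides with the paper's: realize the index jumps $n\mapsto n\pm\ell$ by elementary transformations centred on the two disjoint invariant sections (the paper's Lemmas \ref{presn} and \ref{lem: elementary transformations and orbits length}, Corollaries \ref{cor: Fn cyclic or dihedral} and \ref{cor: Fn odd}), and then extract a parity obstruction from the Sarkisov factorization (the paper's Lemma \ref{lemma: when P1xP1 not linearizable}, to which the even case is reduced via $\FF_0$). The sufficiency half of your argument is essentially complete.

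The necessity half has a genuine gap. You correctly identify the key point --- that $n\bmod d$ must be invariant under \emph{every} link in the chain --- but your resolution only excludes unexpected type $\III$ and type $\IV$ links; it tacitly assumes that every type $\II$ link preserves $n\bmod d$. You verified this only for your two special links, whose centres lie on $\Sigma_n$ or on the complementary invariant section. A general $G$-elementary transformation is centred at an orbit of length $\ell$ of which only some number $t$ of points lie on $\Sigma_n$, and the resulting index is $|n-\ell+2t|$; one must therefore actually prove the congruence ``new index $\equiv n+\ell \pmod 2$'', which the paper does by an intersection computation (every curve on $\FF_{2n}$ has even self-intersection) in the proof of Lemma \ref{lemma: when P1xP1 not linearizable}. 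A second, related gap: after a type $\IV$ swap on $\FF_0$, subsequent type $\II$ links are taken with respect to the \emph{other} ruling, so the admissible orbit lengths are those of the image of $G$ in the automorphisms of the other factor, not of $\widehat{G}$; your invariant ``$n\bmod d$ with $d$ computed on the original base'' does not obviously survive the swap, and one needs Lemma \ref{lem: orbits of a direct product} to conclude that all $G$-orbits on the surface (hence all admissible centres for either ruling) still have even length. Both gaps are fixable with the paper's tools. (A minor slip: the ``unique second fixed point $p_b$'' need not be unique when the fibre stabilizer acts trivially on the fibre; the invariant section of Lemma \ref{presn} is the robust substitute.)
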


Before proving this, we need some easy lemmas. First, recall the following classical definition.

\begin{definition} Let $n\geqslant 0$ be a non-negative integer. 
	\begin{enumerate}
		\item An {\it elementary transformation} of the Hirzebruch surface $\FF_n$ is the following birational transformation. Let $\varphi\colon Y\to \FF_n$ be the blow-up of a point $p$ on a fibre $F$, $\widetilde{F}$ be the strict transform of $F$, $\widetilde{\Sigma}_n$ be the strict transform of the $(-n)$-section $\Sigma_n\subset\FF_n$ and $E$ be the exceptional divisor. We have $(\widetilde{F})^2=(\varphi^*F-E)^2=F^2-1=-1$. Then there is a morphism $\psi\colon Y\to Z$ blowing down $\widetilde{F}$. If $p\notin \Sigma_n$, then $\widetilde{\Sigma}_n^2=\Sigma_n^2=-n$ and $\widetilde{\Sigma}_n$ intersects $\widetilde{F}$ transversely in exactly one point. Thus $\psi(\widetilde{\Sigma}_n)^2=-n+1$ and $Z\simeq\FF_{n-1}$. If $p\in\Sigma_n$, then $\widetilde{\Sigma}_n^2=\Sigma_n^2-1=-n-1$, $\widetilde{\Sigma}_n\cap\widetilde{F}=\varnothing$, so $\psi(\widetilde{\Sigma}_n)^2=-n-1$ and $Z\simeq\FF_{n+1}$. Therefore,
		one has the following diagram for an elementary transformation of $\FF_n$.
		
		\begin{equation}\label{eq: elementary transformation}
			\xymatrix@C+1pc{
				& Y\ar[dl]_{\varphi}\ar[dr]^{\psi} &\\
				\FF_n\ar[d]\ar@{-->}[rr] && {Z=\FF_{n+1}\ \text{or}\ \FF_{n-1}}\ar[d]\\
				\PP^1\ar@{=}[rr] && \PP^1
			}
		\end{equation}
		\item Similarly, we define a \emph{$G$-elementary transformation $\FF_{n}\dashrightarrow\FF_{m}$}, where $G$ is a finite group acting on $\FF_n$. In the diagram (\ref{eq: elementary transformation}), the map $\varphi$ is the blow-up of a $G$-orbit of length $\ell$, and $\psi$ is the contraction of the strict transforms of the fibres through the blown-up points. We assume that no two points of the orbit lie in the same fibre; in what follows, we refer to this condition as to \emph{``conic bundle general position''}. If the blown-up orbit belongs to $\Sigma_n$, then $m={n+\ell}$. 
		\item More generally, given a $G$-conic bundle $\pi\colon S\to\PP^1$, we define a \emph{$G$-elementary transformation} (or a link of type $\II$ between $G$-conic bundles) $\chi\colon S\dashrightarrow S'$, where $\pi'\colon S'\to\PP^1$ is another $G$-conic bundle, as the blow-up of a $G$-orbit on $S$, followed by the contraction of the strict transforms of the fibres through the blown-up points. Once again, we assume that no two points of the orbit lie in the same fibre. Note that $\chi$ does not change the number of singular fibres, i.e. one has $K_S^2=K_{S'}^2$.	
	\end{enumerate}
\end{definition}

\begin{lemma}\label{presn}
	Any subgroup $G\subset\Aut(\FF_n)$ preserves a curve on $\FF_n$ which has self-intersection~$n$ and is disjoint from $\Sigma_n$.
\end{lemma}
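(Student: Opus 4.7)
The plan is to exhibit a $G$-invariant section of $\pi\colon\FF_n\to\PP^1$ disjoint from $\Sigma_n$. Such a section lies in the linear system $|\Sigma_n+nF|$ and hence has self-intersection $(\Sigma_n+nF)^2=n$ and meets $\Sigma_n$ in $(\Sigma_n+nF)\cdot\Sigma_n=0$ points. The first step is to observe that $\Sigma_n$ is the unique irreducible curve with negative self-intersection on $\FF_n$, so $[\Sigma_n]$ is $G$-invariant; together with the uniqueness of the conic bundle structure $\pi$ for $n\geqslant 1$ (which forces $[F]$ to be $G$-invariant), this means $G$ acts trivially on $\Pic(\FF_n)=\ZZ[\Sigma_n]\oplus\ZZ[F]$, and in particular $G$ preserves the linear system $|\Sigma_n+nF|$.

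The second step is a dimension count. The short exact sequence $0\to\mathcal{O}(nF)\to\mathcal{O}(\Sigma_n+nF)\to\mathcal{O}_{\Sigma_n}\to 0$ (in which the last term is $\mathcal{O}_{\Sigma_n}$ because $(\Sigma_n+nF)\cdot\Sigma_n=0$) yields $h^0(\FF_n,\Sigma_n+nF)=(n+1)+1=n+2$, so $|\Sigma_n+nF|\simeq\PP^{n+1}$. Multiplication by a section of $\mathcal{O}(\Sigma_n)$ cutting out $\Sigma_n$ embeds $H^0(\FF_n,nF)\simeq H^0(\PP^1,\mathcal{O}(n))$ as a hyperplane of $H^0(\FF_n,\Sigma_n+nF)$, and the associated hyperplane $\PP^n\subset\PP^{n+1}$ consists exactly of divisors containing $\Sigma_n$ as a component. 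A brief intersection-theoretic check (using $D\cdot F=1$ and $D\cdot\Sigma_n=0$) shows that any $D\in|\Sigma_n+nF|$ not supported on $\Sigma_n$ is automatically an irreducible section disjoint from $\Sigma_n$.

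Consequently the complement $|\Sigma_n+nF|\setminus\PP^n$ is an affine space $\AA^{n+1}$ parametrising the irreducible sections disjoint from $\Sigma_n$, and the $G$-action on $|\Sigma_n+nF|$ (which preserves the hyperplane $\PP^n$) restricts to an affine action on this $\AA^{n+1}$. Since $\kk$ has characteristic zero, averaging a single $G$-orbit produces a $G$-fixed point, corresponding to the desired $G$-invariant curve $C$ with $C^2=n$ and $C\cap\Sigma_n=\varnothing$.

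The only step requiring a genuine argument rather than bookkeeping is the claim that every divisor in $|\Sigma_n+nF|$ not containing $\Sigma_n$ is an irreducible section; this is the main (though mild) obstacle. Once it is in place, the fixed-point step is a standard averaging argument applicable to any affine action of a finite group in characteristic zero.
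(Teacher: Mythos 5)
Your proof is correct and is essentially the paper's argument in dual form: the paper also works with the linear system $|\Sigma_n+nF|$, but phrases things on the target side, namely as the morphism $\phi\colon\FF_n\to S'\subset\PP^{n+1}$ contracting $\Sigma_n$ to the vertex of the cone $S'$, and then invokes complete reducibility to write $\PP^{n+1}=\PP(L\oplus V)$ with $\PP(V)$ a $G$-invariant hyperplane missing the vertex, whose intersection with $S'$ pulls back to the desired curve. That invariant hyperplane corresponds exactly to the $G$-fixed point you produce by averaging in the affine complement of the locus $\{D\supset\Sigma_n\}\simeq\PP^n$ inside $|\Sigma_n+nF|\simeq\PP^{n+1}$, so the two proofs rest on the same computation and the same characteristic-zero averaging/Maschke step.
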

\begin{proof}
	Recall that $S=\FF_n$ has the unique section $\Sigma_n\subset\FF_n$ of self-intersection $-n$. Let $F$ be a fibre of $S$. The complete linear system $|\Sigma_n+nF|$ induces the birational morphism
	$
	\phi\colon S\to S'\subset\PP^{n+1},
	$
	which is the contraction of $\Sigma_n$ onto the vertex of the cone $S'$. As $\Sigma_n$ is preserved by~ $G$, this contraction is $G$-equivariant. Writing $\PP^{n+1}=\PP(L\oplus V)$ with $L$ being 1-dimensional vector space corresponding to the unique singular (and hence $G$-fixed) point of $S'$, we see that $\PP(V)$ is a $G$-invariant hyperplane. It intersects $S'$ in a $G$-invariant rational normal curve of degree~$n$. Its preimage on $S$ under $\phi$ is the required curve. 
\end{proof}

Now we will analyse $G$-elementary transformations of Hirzebruch surfaces in greater detail, based on the ``arithmetic'' of possible $G$-orbits; a similar idea was used e.g. in \cite[Lemma B.15]{CheltsovTwoLocalInequalities}.

\begin{lemma}\label{lem: elementary transformations and orbits length}
	Let $n\geqslant 1$ and consider a $G$-conic bundle $\pi\colon\FF_n\to\PP^1$. One has the following:
	\begin{enumerate}
		\item\label{elem transformation 1} Let $\ell$ be the length of one of the orbits under the action of $\widehat{G}$ on $\PP^1$. Then the conic bundle $\mathbb F_n$ is $G$-birational to $\FF_{|n+k\ell|}$, for any $k\in\mathbb Z$.
		\item\label{elem transformation 2} Let $\ell_i$, $i=1,\ldots,r$, be the lengths of orbits under the action of $\widehat{G}$ on $\PP^1$, and let $d=\gcd\{\ell_i\}$. Then $\FF_n$ is birational to $\FF_{|n+kd|}$ for any $k\in\ZZ$. 
	\end{enumerate}
\end{lemma}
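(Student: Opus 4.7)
The plan is to exploit $G$-elementary transformations centred on one of two canonical $G$-invariant sections of $\pi$. The negative section $\Sigma_n$ is the first; by Lemma~\ref{presn} there is a second $G$-invariant curve $\Sigma_n'$ of self-intersection $+n$ disjoint from $\Sigma_n$, and the intersection $(\Sigma_n+nF)\cdot F=1$ shows it is again a section. For any $\widehat{G}$-orbit $B\subset\PP^1$ of length $\ell$, the preimages $\pi^{-1}(B)\cap\Sigma_n$ and $\pi^{-1}(B)\cap\Sigma_n'$ give $G$-orbits of length $\ell$ on $\FF_n$, each in conic bundle general position.

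For part (1), the $G$-elementary transformation centred at $\pi^{-1}(B)\cap\Sigma_n$ produces $\FF_{n+\ell}$ by the ``$p\in\Sigma_n$'' formula recalled at the start of the section, while the one centred at $\pi^{-1}(B)\cap\Sigma_n'$ produces $\FF_{|n-\ell|}$: a routine self-intersection calculation shows that the images of $\Sigma_n$ and $\Sigma_n'$ acquire self-intersections $-n+\ell$ and $n-\ell$, and whichever is negative is the new minimal section. Since each intermediate $\FF_m$ again carries a pair of disjoint $G$-invariant sections and the $\widehat{G}$-action on the base is unaffected, both operations may be iterated. A short combinatorial argument — walking on $\ZZ_{\geq 0}$ with edges $m\sim m+\ell$ and $m\sim|m-\ell|$ — then identifies the reachable set from $n$ as exactly $\{|n+k\ell|:k\in\ZZ\}$.

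For part (2), I would first apply part (1) with $\ell=\ell_1$ to reach $\FF_N$ with $N$ larger than all $\ell_i$. From this point every $G$-elementary transformation $\FF_N\dashrightarrow\FF_{N\pm\ell_i}$ is a ``no-flip'' step, so a standard Bezout argument based on $d=\sum c_i\ell_i$ reaches every $\FF_{N'}$ with $N'\geq 0$ and $N'\equiv n\pmod d$; a final flip from part (1) then covers the cases where $n+kd<0$.

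The main obstacle I anticipate is the bookkeeping required to certify that after each $G$-elementary transformation the new surface still admits a pair of disjoint $G$-invariant sections on which the fixed base orbit pulls back to conic bundle general position. This will follow from reapplying Lemma~\ref{presn} to the new surface with its inherited $G$-action, together with the fact that the induced map on bases is an isomorphism, but it must be stated precisely to make the iteration rigorous.
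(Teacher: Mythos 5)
Your proposal is correct and follows essentially the same route as the paper: elementary transformations centred at orbits lying on the two disjoint $G$-invariant sections ($\Sigma_n$ and the $+n$-curve of Lemma~\ref{presn}) give the moves $m\mapsto m+\ell$ and $m\mapsto|m-\ell|$, which are then iterated, with part (2) handled by B\'ezout. The only (immaterial) difference is organizational: the paper runs the B\'ezout chain directly and tracks absolute values, whereas you first inflate the index via part (1) so that no sign flips occur during the B\'ezout phase.
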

\begin{proof}
	(\ref{elem transformation 1}) The blow-up of $\FF_n$ at a $G$-orbit of length $\ell$ contained in $\Sigma_n$ and contraction of the proper transforms of the fibres gives a $G$-birational map to $\FF_{n+\ell}$. Moreover, the action of $G$ on the $-(n+\ell)$-curve of $\FF_{n+\ell}$ is the same as the action of $G$ on the $(-n)$-curve of $\FF_n$. It proves the Lemma for $k\geqslant 0$. Let $C$ be a $G$-invariant $n$-curve, given by Lemma \ref{presn}. Notice that since $C$ and $\Sigma_n$ are both sections of the conic bundle, the action of $G$ on $C$ is the same as the action of $G$ on $\Sigma_n$. The blow-up of $\FF_n$ at a $G$-orbit of length $\ell$ contained in $C$ and contraction of the proper transforms of the fibres gives a $G$-birational map to $\FF_{|n-\ell|}$. Moreover, the action of $G$ on the $-|n-\ell|$-curve of $\FF_{|n-\ell|}$ is the same as the action of $G$ on the $(-n)$-curve of $\FF_n$. We proceed by induction. Suppose that constructed a sequence of $G$-elementary transformations $\FF_n\dashrightarrow\FF_{|n-k\ell|}$, where $k\geqslant 1$. If $n-k\ell\geqslant 0$, then an elementary transformation at a $G$-orbit of $\ell$ points, lying on the $G$-invariant $n$-curve as above, gives a map $\FF_{n-k\ell}\dashrightarrow\FF_{|n-(k+1)\ell|}$. If $n-k\ell<0$ then we use an elementary transformation at a $G$-orbit lying on $\Sigma_{|n-k\ell|}$ to get $\FF_{|n-k\ell|}=\FF_{k\ell-n}\dashrightarrow\FF_{k\ell-n+\ell}=\FF_{|n-(k+1)\ell|}$. 
	
	(\ref{elem transformation 2}) By Bézout's identity, we have 
	\begin{equation}\label{eq: Bezout1}
		kd=a_1\ell_1+\ldots+a_s\ell_s-a_{s+1}\ell_{s+1}-\ldots-\ell_ra_r,
	\end{equation}
	where $0\leqslant s\leqslant r$ and $a_i\geqslant 0$ for all $i=1,\ldots,r$. If $k\geqslant 0$, then, by performing $G$-elementary transformations as in \eqref{elem transformation 1}, we get a sequence 
	\begin{equation}\label{eq: Bezout2}
	\FF_{n}\overset{\chi_0}{\dashrightarrow}\FF_{n+a_1\ell_1}\overset{\chi_1}{\dashrightarrow}\FF_{n+a_1\ell_1+a_2\ell_2}\overset{\chi_2}{\dashrightarrow}\ldots\overset{}{\dashrightarrow}\FF_{n+a_1\ell_1+\ldots+a_s\ell_s}\overset{\chi_s}{\dashrightarrow}\FF_{n+a_1\ell_1+\ldots+a_s\ell_s-a_{s+1}\ell_{s+1}}\overset{\chi_{s+1}}{\dashrightarrow}\ldots,
	\end{equation}
	which results in $\FF_{n+kd}$. Suppose now that $k< 0$, and again write $kd$ in the form \eqref{eq: Bezout1}. Perform the sequence of transformations \eqref{eq: Bezout2} up to the $s$-th step. The map $\chi_s$ then leads to $\FF_{|n+a_1\ell_1+\ldots+a_s\ell_s-a_{s+1}\ell_{s+1}|}=\FF_{a_{s+1}\ell_{s+1}-n-a_1\ell_1-\ldots-a_s\ell_s}$. We can proceed by mapping
	\[
	\FF_{a_{s+1}\ell_{s+1}-n-a_1\ell_1-\ldots-a_s\ell_s}\dashrightarrow \FF_{a_{s+2}\ell_{s+2}+a_{s+1}\ell_{s+1}-n-a_1\ell_1-\ldots-a_s\ell_s}=\FF_{|n+a_1\ell_1+\ldots+a_s\ell_s-a_{s+1}\ell_{s+1}-a_{s+2}\ell_{s+2}|},
	\]
	and so on, until we get $\FF_{|n+kd|}$.
\end{proof}

Now, we deduce several corollaries.

\begin{corollary}\label{cor: Fn cyclic or dihedral}
	If $\widehat{G}$ is cyclic or isomorphic to $\Dih_m$ with $m$ odd, then $\FF_n$ is $G$-birational to $\FF_k$, for any non-negative integer $k$. In particular, $G$ is linearizable.
\end{corollary}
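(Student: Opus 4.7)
The plan is to deduce this immediately from Lemma \ref{lem: elementary transformations and orbits length}(\ref{elem transformation 2}) by showing that in both hypotheses on $\widehat{G}$, the greatest common divisor $d$ of the lengths of $\widehat{G}$-orbits on $\PP^1$ equals $1$. Once this is established, the lemma gives a $G$-birational map $\FF_n\dashrightarrow \FF_{|n+k|}$ for every $k\in\ZZ$, and for an arbitrary non-negative integer $j$ the choice $k=j-n$ produces $\FF_n\dashrightarrow \FF_{j}$.

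To check $d=1$, I invoke Klein's orbit description (Proposition~\ref{prop: Klein orbits}). If $\widehat{G}\simeq \Cyc_m$, then $\widehat{G}$ has two fixed points on $\PP^1$, so there is an orbit of length $1$ and hence $d=1$. If $\widehat{G}\simeq\Dih_m$ with $m$ odd, then $\widehat{G}$ has an orbit of length $2$ and an orbit of length $m$; since $\gcd(2,m)=1$, again $d=1$. Note that the elementary transformations used in Lemma~\ref{lem: elementary transformations and orbits length} are performed on orbits contained either in $\Sigma_n$ or in the $G$-invariant $n$-curve provided by Lemma~\ref{presn}; since both are sections of the $G$-conic bundle $\pi$, any orbit contained in one of them automatically lies in conic bundle general position, so the hypothesis of the lemma is met.

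For the final linearizability claim, take $j=1$: we obtain a $G$-birational map $\FF_n\dashrightarrow\FF_1$. On $\FF_1$ the $(-1)$-section is unique and therefore $G$-invariant, so its contraction is $G$-equivariant and yields a regular action of $G$ on $\PP^2$. Composing, we get a $G$-birational map $\FF_n\dashrightarrow\PP^2$ that conjugates $G\subset\Aut(\FF_n)$ into $\PGL_3(\kk)$, which is precisely the definition of linearizability.

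There is essentially no obstacle: the whole argument is a one-line application of Lemma~\ref{lem: elementary transformations and orbits length} combined with Klein's orbit list. The only point that needs a brief justification (and which I would make explicit in the write-up) is that the elementary transformations are legitimate, i.e.\ that the chosen orbits sit in general position with respect to the conic bundle structure; this is automatic because they live on sections of~$\pi$.
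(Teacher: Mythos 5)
Your proposal is correct and follows essentially the same route as the paper: both arguments apply Lemma \ref{lem: elementary transformations and orbits length}(\ref{elem transformation 2}) after observing that the gcd of the $\widehat{G}$-orbit lengths is $1$ (a fixed point in the cyclic case; orbits of lengths $2$ and $m$ with $m$ odd in the dihedral case, via Proposition \ref{prop: Klein orbits}), and both finish by passing to $\FF_1$ and contracting the unique $(-1)$-curve. Your extra remark that the blown-up orbits lie on sections of $\pi$ and hence are automatically in conic bundle general position is a reasonable clarification already implicit in the construction of that lemma.
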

\begin{proof}
	If $\widehat{G}$ is cyclic, then it fixes a point on $\PP^1$. If $\widehat{G}$ is isomorphic to $\Dih_m$ with $m$ odd, then there is an orbit of length 2 and of odd length by Proposition \ref{prop: Klein orbits}. In both cases, by Lemma \ref{lem: elementary transformations and orbits length} (\ref{elem transformation 2}), the surface $\FF_n$ is $G$-birational to $\FF_{|n+s|}$ for any $s\in\ZZ$, hence the first claim. In particular, by taking $s=1-n$, we arrive at $\FF_1$ where we can $G$-equivariantly contract the unique $(-1)$-curve to get $\PP^2$.
\end{proof}

\begin{corollary}\label{cor: Fn odd}
	The $G$-conic bundle $\FF_n$ is $G$-birational to any $\FF_{|n+2k|}$, for any $k\in\ZZ$. So, if $n$ is even, then $\FF_n$ is $G$-birational to the $G$-conic bundle $\FF_0\simeq\PP^1\times\PP^1$. If $n$ is odd, then $\FF_n$ is $G$-birational to $\FF_1$ and hence $G$ is linearizable in this case.
\end{corollary}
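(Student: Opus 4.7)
The plan is to extract the first claim from Lemma~\ref{lem: elementary transformations and orbits length}(\ref{elem transformation 2}) by showing that the integer $d=\gcd\{\ell_i\}$ of the orbit lengths of $\widehat{G}$ acting on $\PP^1$ always divides $2$. Once this is established, Lemma~\ref{lem: elementary transformations and orbits length}(\ref{elem transformation 2}) immediately yields a $G$-birational map $\FF_n\dashrightarrow \FF_{|n+2k|}$ for every $k\in\ZZ$, since $2k$ is then a multiple of $d$.

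To verify $d\mid 2$, I would invoke Klein's classification (Proposition~\ref{prop: Klein}): the finite group $\widehat{G}\subset\PGL_2(\kk)$ is either trivial, cyclic $\Cyc_m$, dihedral $\Dih_m$, or one of $\Alt_4,\Sym_4,\Alt_5$. Inspecting the orbit lengths listed in Proposition~\ref{prop: Klein orbits}, one finds $d=1$ in the trivial and cyclic cases (presence of fixed points), $d\in\{1,2\}$ for $\Dih_m$ (there is always an orbit of length~$2$), and $d=2$ for $\Alt_4$ (lengths $4,6,12$), $\Sym_4$ (lengths $6,8,12,24$) and $\Alt_5$ (lengths $12,20,30,60$). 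In every case $d\mid 2$.

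For the remaining assertions, note that $|n+2k|$ has the same parity as $n$. If $n$ is even, taking $k=-n/2$ gives a $G$-birational map $\FF_n\dashrightarrow\FF_0\simeq \PP^1\times\PP^1$. If $n$ is odd, taking $k=(1-n)/2$ produces a $G$-birational map $\FF_n\dashrightarrow\FF_1$; the negative section $\Sigma_1\subset \FF_1$ is the unique $(-1)$-curve and is hence automatically $G$-invariant, so the morphism $\FF_1\to\PP^2$ contracting it is $G$-equivariant and exhibits the action of $G$ as a subgroup of $\PGL_3(\kk)$.

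There is no serious obstacle here: the argument is a bookkeeping application of Lemma~\ref{lem: elementary transformations and orbits length} together with Klein's orbit table. The only minor points to check are that the trivial case $\widehat{G}=\{\id\}$ is handled (every orbit has length~$1$, so $d=1$), and that in the odd case the contraction $\FF_1\to\PP^2$ is $G$-equivariant, which follows from the uniqueness of the $(-1)$-curve on $\FF_1$.
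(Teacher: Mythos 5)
Your proposal is correct and follows essentially the same route as the paper: both reduce to Lemma~\ref{lem: elementary transformations and orbits length}(\ref{elem transformation 2}) by checking, via the orbit lengths in Proposition~\ref{prop: Klein orbits}, that the gcd of the $\widehat{G}$-orbit lengths on $\PP^1$ divides $2$ (the paper merely splits off the cyclic case by citing Corollary~\ref{cor: Fn cyclic or dihedral}, which is the same computation). Your closing remark on the $G$-equivariant contraction of the unique $(-1)$-curve on $\FF_1$ matches the paper's conclusion as well.
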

\begin{proof}
	If $\widehat{G}$ is cyclic, we conclude using Corollary \ref{cor: Fn cyclic or dihedral}. Otherwise, Proposition \ref{prop: Klein orbits} implies that there is always a pair (or a triple) of orbits of $\widehat{G}$ having $2$ as the greatest common divisor of their lengths. By Lemma \ref{lem: elementary transformations and orbits length} (\ref{elem transformation 2}), we conclude that $\FF_n$ is $G$-birational to $\FF_{|n+2k|}$, for any $k\in\ZZ$.
\end{proof}

\subsection{Quadrics with invariant Picard group of rank $1$}

We will classify finite linearizable subgroups of $\Aut(S)$ according to the isomorphism class of $H$ in the exact sequence~ (\ref{eq: P1xP1 Picard 1 ses}).

\begin{proposition}\label{prop: dP8 Pic=1 cyclic}
	If $H$ is cyclic, then $G$ is linearizable.
\end{proposition}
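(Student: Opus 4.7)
The plan is to reduce $G$, via conjugation inside $\Aut(S)$, to a subgroup of the wreath product $\Cyc_n\wr\Cyc_2$, and then to linearize by stereographic projection from one of the two points of $S$ fixed by this wreath product. The work splits naturally into three short steps.

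First, I would invoke the cyclic case of the proof of Theorem~\ref{thm: groups acting on p1xp1}. After conjugation by one of the automorphisms $\alpha,\beta$ used there (both of which lie in $\Aut(S)^\circ$ and therefore produce a $G$-isomorphic $G$-surface), I may assume that $H\subset\PGL_2(\kk)$ is generated by $\Rot_n$ and that $G$ is contained in
\[
\Cyc_n\wr\Cyc_2\;=\;\langle H\times H,\swap_0\rangle\;\subset\;\Aut(S),
\]
where $H\times H$ acts fibrewise by $(u,v)\mapsto(\omega_n^a u,\omega_n^b v)$ in affine coordinates and $\swap_0\colon(u,v)\mapsto(v,u)$.

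Second, I would exhibit a $G$-fixed point. The two points $p_0=([0{:}1],[0{:}1])$ and $p_\infty=([1{:}0],[1{:}0])$ are the Klein fixed points of $H$ on each ruling (Proposition~\ref{prop: Klein orbits}), so they are fixed by $H\times H$; moreover each is individually preserved by $\swap_0$. Hence both points are fixed by the entire wreath product $\Cyc_n\wr\Cyc_2$, and in particular by $G$.

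Third, I would linearize by the stereographic projection from $p_\infty$. The linear system $|L_1+L_2-p_\infty|$ of bidegree-$(1,1)$ divisors passing through $p_\infty$ has dimension two, with monomial basis $\{x_0y_1,\,x_1y_0,\,x_1y_1\}$, giving the $G$-equivariant birational map
\[
\pi\colon S\dashrightarrow\PP^2,\qquad (u,v)\longmapsto [v:u:uv].
\]
Under $\pi$, the automorphism $(u,v)\mapsto(\omega_n^a u,\omega_n^b v)$ becomes the diagonal linear map $[X:Y:Z]\mapsto[\omega_n^b X:\omega_n^a Y:\omega_n^{a+b} Z]$, and $\swap_0$ becomes the permutation $[X:Y:Z]\mapsto[Y:X:Z]$. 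Both are elements of $\PGL_3(\kk)$, so $\pi G\pi^{-1}\subset\PGL_3(\kk)$ and $G$ is linearizable. I do not expect a serious obstacle: once the normal form from Theorem~\ref{thm: groups acting on p1xp1} is in hand, everything reduces to an explicit monomial computation; the only point requiring care is that the conjugations used to reach that normal form lie inside $\Aut(S)$, so passing to the conjugate subgroup is a $G$-birational equivalence.
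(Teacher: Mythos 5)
Your proof is correct, and its overall strategy coincides with the paper's: exhibit a $G$-fixed point on $S=\PP^1\times\PP^1$ and linearize by the stereographic projection from it. The only real difference is how the fixed point is found. The paper does this with a short self-contained argument: the cyclic group $H\times_Q H\subset H\times H$ has exactly four fixed points on $S$, the element $g$ covering the generator of $\Cyc_2$ permutes them while swapping the rulings, and since $g^2\in H\times_Q H$ fixes all four, $g$ cannot act as a $4$-cycle and must therefore fix one of the two ``diagonal'' pairs. You instead route through the normal form established in the cyclic case of the proof of Theorem~\ref{thm: groups acting on p1xp1}, conjugating $G$ into $\langle H\times H,\swap_0\rangle\simeq\Cyc_n\wr\Cyc_2$ and reading off the two fixed points $([1{:}0],[1{:}0])$ and $([0{:}1],[0{:}1])$; this is legitimate (that theorem is proved earlier and independently) but heavier than necessary, since the permutation argument avoids any normalization. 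Your explicit monomial description of the projection $\pi\colon(u,v)\dashmapsto[v{:}u{:}uv]$ and of the images of the generators is a nice concrete supplement to the paper's Remark~\ref{rem: linearization via stereo}, which phrases the same projection as a composition of Sarkisov links through the degree~$7$ surface.
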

\begin{proof}
	We claim that $G$ has a fixed point, so the stereographic projection from it linearizes the action of $G$. Indeed, the group $H\times_Q H$ is contained in $H\times H$, hence it has exactly 4 fixed points on $S$. We can choose the coordinates on $S$ so that these points are
	\[
	p_1=([1:0],[1:0]),\ \ p_2=([1:0],[0:1]),\ \ p_3=([0:1],[1:0]),\ \ p_4=([0:1],[0:1]).
	\]	
	Let $g\in G$ be an element which is mapped to the generator of $\Cyc_2$ in the exact sequence (\ref{eq: P1xP1 Picard 1 ses}), i.e. which does not preserve the rulings of $S$. Then $G$ is generated by $g$ and $H\times_Q H$. Since $g$ normalizes $H\times_Q H$, it permutes the points $p_1,p_2,p_3,p_4$. Since $g$ swaps the rulings of $S$, it fixes $p_1$ and $p_3$, or it fixes $p_2$ and $p_4$, or it permutes $p_1$, $p_2$, $p_3$ and $p_4$ cyclically. In the first two cases, we are done. The third case is impossible, since $g^2$ would swap $p_i$ with $p_{i+2}$, while it belongs to $H\times_Q H$, i.e. fixes all four points.
\end{proof}

\begin{remark}\label{rem: linearization via stereo}
	In the language of Sarkisov program, we linearize the action of $G$ as follows. Let $\eta\colon T\to S=\PP^1\times\PP^1$ be the blow-up of a $G$-fixed point $p\in S$. The surface $T$ is a del Pezzo surface of degree 7 with three exceptional curves $F_1, F_2$, and $E$, where $E$ is the $\eta$-exceptional divisor, $F_1$ and $F_2$ are the preimages of fibres through $p$. One has $E\cdot F_1=E\cdot F_2=1$, and $F_1\cdot F_2=0$. Then there is a $G$-contraction $\eta'\colon T\to\PP^2$ of $F_1$ and $F_2$ onto a pair of points.
\end{remark}

We are going to show that $G$ is not linearizable in any of the other cases. 

\begin{observation*}\label{linksp1p1} 	
	We use the Notations of Section \ref{subsec: Sarkisov theory}. Every Sarkisov $G$-link starting from $S$ is either of type $\I$, where $\eta$ blows up a $G$-orbit of length 2, or is of type $\II$, and one of the following holds:
	\begin{enumerate}
		\item $S\simeq S'$, $d=d'=7$, $\chi$ is a birational Bertini involution;
		\item $S\simeq S'$, $d=d'=6$, $\chi$ is a birational Geiser involution;
		\item $S'$ is a del Pezzo surface of degree 5, $d=5$, $d'=2$;
		\item $S'\simeq\PP^1\times\PP^1$, $d=d'=4$;
		\item $S'$ is a del Pezzo surface of degree 6, $d=3$, $d'=1$;
		\item $S'\simeq\PP^2$, $d=1$, $d'=2$.
	\end{enumerate}	
	Birational Geiser and Bertini involutions lead to a $G$-isomorphic surface. By \cite[Proposition 4.3]{YasinskyGBirationalRigidity}, links centred at orbits of length 4 also result in a surface which is $G$-isomorphic to $S$. Therefore, any $G$-link starting from $S$ and leading to a non-isomorphic surface must be centred at an orbit of length $d\in\{1,2,3,5\}$.
\end{observation*}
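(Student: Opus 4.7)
The plan is to derive the Observation as a direct application of Iskovskikh's classification of equivariant Sarkisov links in dimension two (\cite[Theorem 2.6]{Isk1996}, cf.\ \cite[Propositions 7.12--7.13]{DolgachevIskovskikh}), specialised to our $G$-del Pezzo surface $S \simeq \PP^1 \times \PP^1$ of degree $K_S^2 = 8$ with $\Pic(S)^G\simeq\ZZ$. Since $S$ is fibred over a point, only links of types $\I$ and $\II$ can originate at $S$, so the argument splits into two parts.

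First, I would treat the type $\I$ links. Such a link is the blow-up $\eta\colon T\to S$ of a $G$-orbit $Z$ of length $k$ so that $T$ admits a $G$-conic bundle structure $\pi\colon T\to\PP^1$. The hypothesis $\Pic(S)^G\simeq\ZZ$ forces $G$ to swap the two rulings of $S$, so $\Pic(S)^G=\ZZ\langle h_1+h_2\rangle$, and the orbit of exceptional divisors contributes a single $G$-invariant class $\sum E_i$; hence $\Pic(T)^G=\ZZ\langle h_1+h_2\rangle\oplus\ZZ\langle\sum E_i\rangle$. Writing a candidate $G$-invariant fibre class as $f=a(h_1+h_2)+b\sum E_i$, the conditions $f^2=0$ and $-K_T\cdot f=2$ reduce to $2a^2=kb^2$ and $4a+kb=2$. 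Inspecting integer solutions with $k\leqslant 8$ singles out $k=2$, $a=1$, $b=-1$, so $|Z|=2$ and the pencil $|h_1+h_2-E_1-E_2|$ realises the conic bundle.

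Next, I would enumerate the type $\II$ links $S\xleftarrow{\eta}T\xrightarrow{\eta'}S'$, where $\eta$ blows up a $G$-orbit of length $d$, and $\eta'$ contracts a $G$-orbit of $(-1)$-curves of length $d'$, so that $K_{S'}^2=8-d+d'$ and $\Pic(S')^G\simeq\ZZ$. Iskovskikh's table bounds $d\leqslant 7$ and, for each value of $d$, specifies $d'$ and the target $S'$ uniquely. The six subcases correspond to $d\in\{1,3,4,5,6,7\}$ with the targets listed in the statement; the case $d=2$ is precisely the type $\I$ link identified above and thus produces no type $\II$ link, since the only $G$-invariant skew pair of $(-1)$-curves on the sextic $T$ is $\{E_1,E_2\}$, whose contraction is $\eta$ itself. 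For $d\in\{6,7\}$, $T$ is a del Pezzo surface of degree $2$ (resp.\ $1$), and the blow-down is implemented by the classical Geiser (resp.\ Bertini) involution of $|-K_T|$ (resp.\ $|-2K_T|$), which returns $S$ up to $G$-isomorphism.

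The final sentence of the Observation then follows immediately: Bertini and Geiser links preserve the $G$-isomorphism class of $S$ tautologically, and the $\II$-link with $d=4$ does so by \cite[Proposition 4.3]{YasinskyGBirationalRigidity}, whereas the type $\I$ link takes $S$ to a $G$-conic bundle, which is not $G$-isomorphic to $S$ as a $G$-Mori fibre space. Hence only $d\in\{1,2,3,5\}$ can lead to a $G$-Mori fibre space non-isomorphic to $S$. The main obstacle in this approach is the bookkeeping inside the type $\II$ enumeration: for each $d$ one must verify that the $(-1)$-curves on the intermediate del Pezzo $T$ of degree $8-d$ admit a $G$-orbit of the prescribed length $d'$ whose contraction yields a smooth $G$-del Pezzo $S'$ of the claimed degree, which relies on the root-lattice structure of $\Pic(T)$ (types $A_1,A_4,D_5,E_6,E_7$ in the respective cases) and on ruling out extraneous orbits of $(-1)$-curves.
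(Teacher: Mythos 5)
Your proposal is correct and follows essentially the same route as the paper: the Observation is read off from Iskovskikh's classification of two-dimensional Sarkisov $G$-links (\cite[Theorem 2.6]{Isk1996}, \cite[Propositions 7.12, 7.13]{DolgachevIskovskikh}), with \cite[Proposition 4.3]{YasinskyGBirationalRigidity} handling the $d=4$ case and the Geiser/Bertini links being tautologically self-returning. Your extra intersection-theoretic check that a type $\I$ link forces $d=2$ (via $2a^2=kb^2$, $4a+kb=2$) and that the sextic intermediate surface admits no second contractible $G$-orbit of disjoint $(-1)$-curves is a correct, welcome supplement to what the paper leaves implicit.
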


\begin{proposition}\label{prop: dP8 Pic=1 A4 S4 A5 not linearizable}
	Let $S=\PP^1\times\PP^1$ and $G\subset\Aut(S)$ be a finite group such that $\Pic(S)^G\simeq\ZZ$. Assume that in the setting of the exact sequence \eqref{eq: P1xP1 Picard 1 ses} the group $H$ is isomorphic to $\Alt_4,\Sym_4$ or $\Alt_5$. Then $S$ is $G$-birationally rigid. In particular, $G$ is not linearizable.
\end{proposition}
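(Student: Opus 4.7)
My plan is to leverage the Observation immediately preceding the statement, which lists exactly the $G$-orbit lengths $d\in\{1,2,3,5\}$ that can initiate a Sarkisov $G$-link from $S=\PP^1\times\PP^1$ to a surface not $G$-isomorphic to $S$. Once I rule out the existence of $G$-orbits of any of these lengths, every Sarkisov $G$-link from $S$ lands on a $G$-isomorphic model, so $S$ is $G$-birationally rigid; since $\PP^2\not\simeq S$ already as abstract surfaces, non-linearizability of $G$ is then automatic.

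The heart of the argument is a lower bound on $G$-orbit lengths obtained in two steps. First, set $K=H\times_Q H$, which by the exact sequence \eqref{eq: P1xP1 Picard 1 ses} is a normal subgroup of $G$ of index $2$. By the definition of the fibre product (or by Goursat's lemma), both projections $K\to H$ are surjective. I would then invoke Lemma \ref{lem: orbits of a direct product} with $A=B=H$, $X=Y=\PP^1$, and the group $K\subset H\times H$: it guarantees that every $K$-orbit on $S$ has length divisible by the length of some $H$-orbit on $\PP^1$. By Klein's orbit classification (Proposition \ref{prop: Klein orbits}), the minimum $H$-orbit length on $\PP^1$ is $4$ for $H\simeq\Alt_4$, $6$ for $H\simeq\Sym_4$, and $12$ for $H\simeq\Alt_5$. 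Thus the same lower bounds apply to every $K$-orbit on $S$.

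To promote the bound from $K$ to $G$, I would apply Lemma \ref{lem: orbits of subgroups} to the normal subgroup $K\triangleleft G$ of index $2$: each $G$-orbit decomposes as a disjoint union of equally sized $K$-orbits, with the number of components dividing~$2$. Hence every $G$-orbit has length equal to $1$ or $2$ times the length of some $K$-orbit, so $G$-orbits on $S$ still have length at least $4$, $6$, or $12$ in the three respective cases. In particular no $G$-orbit has length $1$, $2$, $3$, or $5$, which by the Observation rules out every Sarkisov $G$-link that could lead to a non-$G$-isomorphic target and finishes the proof.

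I do not expect a serious obstacle here: the Sarkisov link classification in the Observation, Klein's orbit lengths, and the two elementary orbit lemmas of Section \ref{sec: group theory} already carry the weight of the argument. The only care required is a small combinatorial check that the bounds ``$\geqslant 4$, $\geqslant 6$, $\geqslant 12$'' indeed exclude all of $1,2,3,5$, which they clearly do.
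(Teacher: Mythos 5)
Your proof is correct and follows essentially the same route as the paper's: the authors likewise combine Lemma \ref{lem: orbits of a direct product} with Klein's orbit lengths (Proposition \ref{prop: Klein orbits}) to conclude that every $G$-orbit on $S$ has even length $\geqslant 4$, so among the links listed in the Observation only Geiser involutions and links centred at orbits of length $4$ can occur, both of which return a $G$-isomorphic surface. One small imprecision: your closing remark that the bounds ``$\geqslant 4$, $\geqslant 6$, $\geqslant 12$'' exclude orbits of length $5$ is not literally true for $H\simeq\Alt_4$ (since $5\geqslant 4$); what actually excludes $5$ is parity --- every $(H\times_Q H)$-orbit length is divisible by $4$, $6$ or $12$, hence even, and a $G$-orbit is one or two equal such orbits, hence also of even length --- a fact your argument does establish via the divisibility statement of Lemma \ref{lem: orbits of a direct product} but which your summary sentence drops.
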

\begin{proof}
	Note that an orbit of $G$ is a disjoint  union of orbits of $H\times_Q H$. Therefore, by Proposition \ref{prop: Klein orbits} and Lemma~\ref{lem: orbits of a direct product}, any orbit of $G$ on $S$ has length $2m\geqslant 4$. Hence $S$ can admit only $G$-birational Geiser involutions and links at points of degree 4. As was observed before, they give a $G$-isomorphic surface.
\end{proof}

\begin{proposition}\label{prop: dP8 Pic=1 Dihedral not linearizable}
	Let $S=\PP^1\times\PP^1$ and $G\subset\Aut(S)$ be a finite group such that $\Pic(S)^G\simeq\ZZ$. Assume that in the setting of the exact sequence \eqref{eq: P1xP1 Picard 1 ses} the group $H$ is dihedral $\Dih_n$. Then $G$ is not linearizable.
\end{proposition}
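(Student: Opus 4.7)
The approach is via the Sarkisov program: I will show that no chain of $G$-Sarkisov links from $S = \PP^1 \times \PP^1$ can reach $\PP^2$. By Observation~\ref{linksp1p1}, a link from $S$ that leaves its $G$-isomorphism class is either the type~$\I$ link centred at a $G$-orbit of length~$2$ (landing on a $G$-conic bundle of degree~$6$), or a type~$\II$ link with $d \in \{1, 3, 5\}$, targeting respectively $\PP^2$, a $G$-del Pezzo surface of degree~$6$, or a $G$-del Pezzo surface of degree~$5$. It therefore suffices to rule out each of these starting links.

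First, $G$ has no fixed point on $S$: since $\Dih_n$ has no fixed points on $\PP^1$ for $n \geq 2$ and both projections $K = H \times_Q H \twoheadrightarrow H$ are surjective by the definition of the fibred product, a $G$-fixed point $(p,q)$ would force $Hp = \{p\}$, a contradiction. This excludes the $d = 1$ link. Next, by Lemma~\ref{lem: orbits of a direct product} applied to $K$, every $K$-orbit on $S$ has length divisible by some element of $\{2, n, 2n\}$, namely the $\Dih_n$-orbit lengths on $\PP^1$ from Proposition~\ref{prop: Klein orbits}; and since $[G:K] = 2$, each $G$-orbit is either a $K$-orbit or a disjoint union of two $K$-orbits of equal length. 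Consequently, a $G$-orbit of length~$3$ forces $n = 3$, one of length~$5$ forces $n = 5$, and a $G$-orbit of length~$2$ must lie inside $\{[1:0],[0:1]\}^2 \subset S$.

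For the $d = 3$ and $d = 5$ links a cardinality argument suffices: in both cases $|G| = 2|K| \geq 4n$, so $|G| \geq 12$ when $n = 3$ and $|G| \geq 20$ when $n = 5$. By Proposition~\ref{prop: dP5 minimal groups}, the target of a $d = 5$ link must be isomorphic to $\Frobenius_5$, $\Alt_5$ or $\Sym_5$, all non-linearizable by Proposition~\ref{dP5: D5 is linearizable}. For $d = 3$, the target is a $G$-del Pezzo surface of degree~$6$ with $|G| \geq 12$, and by Lemma~\ref{lem: dP6 minimal groups} one verifies case-by-case that either $G \simeq \Dih_6$, which is non-linearizable by Iskovskikh's Example~\ref{ex: dP6 Iskovskikh's example}, or $G \cap \Torus \neq \{\id\}$, which is non-linearizable by Proposition~\ref{prop: dP6 criterion}. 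The remaining $d = 2$ case is the main obstacle: the target is a $G$-conic bundle of degree~$6$ with $\Pic^{G}$ of rank~$2$, not directly covered by the above results. Here one must iterate the Sarkisov program, checking that every subsequent link either returns to the dihedral $G$-class on $\PP^1 \times \PP^1$ or reaches a $G$-Mori fibre space already known to be non-linearizable; this case analysis is guided by the possible shapes of $K = H \times_Q H$ supplied by Goursat's lemma.
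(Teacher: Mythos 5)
Your reduction to the starting links with $d\in\{1,2,3,5\}$, the fixed-point argument excluding $d=1$, and the orbit-length bookkeeping via Lemmas \ref{lem: orbits of a direct product} and \ref{lem: orbits of subgroups} all match the paper. Your treatment of $d=3$ is in fact a little cleaner than the paper's: you constrain $G$ directly through Lemma \ref{lem: dP6 minimal groups} (either $G\simeq\Dih_6$ or $G\cap\Torus\neq\{\id\}$, both non-linearizable), whereas the paper first excludes $\Cyc_3^2\subset G$ by noting it cannot embed into the automorphism group $\Sym_5$ of the intermediate quintic del Pezzo surface. The $d=5$ case is likewise fine.

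However, there is a genuine gap: you do not prove the $d=2$ case, you only announce that "one must iterate the Sarkisov program." This is not a routine verification that can be waved at --- it is the substantive core of the proposition, and it is exactly where the paper invests its effort. The difficulty is that the type $\I$ link at a length-$2$ orbit lands on a sextic del Pezzo surface $T$ with a $G$-conic bundle structure and two singular fibres; from there every link is either a $G$-elementary transformation $T\dashrightarrow T'$ or a type $\III$ contraction $T'\to S'$ back to a degree-$8$ $G$-del Pezzo surface, so one must control an a priori unbounded chain as in Diagram \eqref{eq: sequence of links}. The paper closes this loop by tracking a concrete invariant through the chain: on $T$ the only $G$-fixed points are the singular points of the two degenerate fibres (read off from the hexagon of $(-1)$-curves $E_1,\dots,E_6$, with $E_1,E_4$ the exceptional sections and $E_2+E_3$, $E_5+E_6$ the singular fibres whose components $G$ swaps); elementary transformations are isomorphisms near these points, so the same holds on $T'$; and the type $\III$ contraction of the two disjoint sections $L_1,L_2$ therefore produces a surface $S'$ on which $G$ has \emph{no} fixed point. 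Combined with Propositions \ref{prop: dP8 Pic=1 cyclic} and \ref{prop: dP8 Pic=1 A4 S4 A5 not linearizable} and the $d\in\{1,3,5\}$ analysis, this shows the chain can never reach $\PP^2$. Your proposal contains no substitute for this fixed-point propagation argument, and Goursat's lemma alone (your suggested guide) does not supply it, since the obstruction here is geometric --- the location of $G$-fixed points on the successive models --- rather than group-theoretic. As written, the proof is incomplete.
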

\begin{proof}
	
	We may assume that there is a $G$-orbit $\Sigma\subset S$ in general position, i.e. its blow-up gives a del Pezzo surface. Then Lemma \ref{lem: orbits of subgroups} implies that $\Sigma$ is a disjoint union of at most 2 orbits of $H\times_Q H$. Suppose that $\Sigma=\Sigma_1\sqcup\Sigma_2$, where $|\Sigma_1|=|\Sigma_2|$. By Proposition \ref{prop: Klein orbits} and Lemma~\ref{lem: orbits of a direct product}, these cardinalities are divisible by 2 or $n$. Then $|\Sigma|$ is an even number $\geqslant 4$, and therefore $S$ is $G$-birationally rigid. So, we may assume that $\Sigma$ is an orbit of $H\times_Q H$. By the same observations, we only need to investigate orbits of size 2, 3 and 5.
	
	Suppose $|\Sigma|=5$, i.e. $H\simeq\Dih_5$. The link $S\dashrightarrow S'$ centred at $\Sigma$ gives a $G$-del Pezzo surface $S'$ of degree 5. Since $Q$ can be isomorphic to $\{\id\}$, $\Cyc_2$ or $\Dih_5$, we find that either $G$ is an extension of $\Cyc_2$ by $\Dih_5$, or $G$ contains a copy of $\Cyc_5^2$ by Lemma~ \ref{lem: Goursat exact sequence}. In the latter case, $G$ cannot be embedded into $\Aut(S')\simeq\Sym_5$, and neither $\Dih_5\times\Cyc_2$ can. Hence we may assume that $G\simeq\Frobenius_5$. But in this case, $G$ cannot be linearized by Proposition~\ref{dP5: D5 is linearizable}.	
	
	If $|\Sigma|=3$, then $H\simeq\Sym_3$, and the blow-up of $\Sigma$ gives a del Pezzo surface $T$ of degree 5. Since $Q$ can be isomorphic to $\{\id\}$, $\Cyc_2$ or $\Sym_3$, we find that either $G\simeq\Sym_3\times\Cyc_2$, or $G$ contains a copy of $\Cyc_3^2$ by Lemma~ \ref{lem: Goursat exact sequence}. In the latter case, $G$ does not embed into $\Aut(T)\simeq\Sym_5$, hence we may assume that we are in the former case. Then our Sarkisov link ends up on a $G$-del Pezzo surface of degree~6. Since $G\simeq\Sym_3\times\Cyc_2$, it cannot be linearized by Proposition~\ref{prop: dP6 criterion}.
	
	Finally, case $|\Sigma|=2$ corresponds to a link of type $\I$ leading to the del Pezzo surface $T$ of degree~$6$ with a $G$-conic bundle structure. All Sarkisov $G$-links starting from a $G$-conic bundle with~2 singular fibres are either $G$-elementary transformations $T\dashrightarrow T'$, or links $T'\to S'$ of type $\III$ leading back to a $G$-del Pezzo surface $S'$ of degree~8, so the composition of such links looks like in Diagram \ref{eq: sequence of links}. In view of Propositions \ref{prop: dP8 Pic=1 cyclic}, \ref{prop: dP8 Pic=1 A4 S4 A5 not linearizable} and the previous cases, it is enough to show that $G$ does not fix a point on $S'$, and hence cannot be further linearized. Let us denote cyclically the six $(-1)$-curves forming a hexagon on~$T$ by $E_1,\dots,E_6$. We may assume that $E_1$ and $E_4$ are the exceptional divisors over $\Sigma$, so in particular the action of~$G$ must swap $E_1$ and $E_4$. They both form sections of the $G$-conic bundle $\pi\colon T\rightarrow\PP^1$. On the other hand, $E_2+E_3$ and $E_5+E_6$ are the two singular fibres of $\pi$, whose irreducible components are swapped by $G$. By Theorem \ref{theorem: minimal conic bundles}, the $G$-conic bundle $\pi'\colon T'\to\PP^1$ is again a del Pezzo surface of degree 6. Denote by $D_1$ and $D_2$ the singular fibres of $\pi'$. Besides their irreducible components, the surface $T'$ has two more $(-1)$-curves $L_1$ and $L_2$, which are disjoint sections of $\pi'$, swapped by $G$. Note that the points $E_2\cap E_3$ and $E_5\cap E_6$ are unique $G$-fixed points on $T$. Since a sequence of $G$-elementary transformations $T\dashrightarrow T'$ is an isomorphism away from $G$-orbit of curves of length $>1$, the singular points of $D_1$ and $D_2$ are the unique $G$-fixed points on $T'$. Thus, the contraction $T'\to S'$ of $L_1$ and $L_2$ leaves no $G$-fixed points on $S'$.
\end{proof}

\subsection{Quadrics with invariant Picard group of rank $2$}

Suppose that $\Pic(S)^G\simeq\ZZ^2$. We endow $S=\PP^1\times\PP^1$ with coordinates $([x_0:x_1],[y_0:y_1])$ and fix two projections $\pi_1$ and $\pi_2$ to $[x_0:x_1]$ and $[y_0:y_1]$, respectively. We often use the affine coordinates $x=x_1/x_0$ and $y=y_1/y_0$ on $S$, so that the points $([1:0],[1:0])$ and $([0:1],[0:1])$ correspond to $(0,0)$ and $(\infty,\infty)$. We follow Notations \ref{notation: matrices} for the generators of finite subgroups of $\PGL_2(\kk)$.

By Proposition \ref{prop: exact sequence p1xp1}, the group $G$ is the fibred product $H_1\times_Q H_2$, where $H_1$ and $H_2$ are finite subgroups of $\PGL_2(\kk)$, acting on $S$ fibrewisely (projections of $G$ induced by $\pi_1$ and $\pi_2$, respectively). The crucial observation, which follows from \cite[Propositions 7.12, 7.13]{DolgachevIskovskikh}, is that $G$ is linearizable if and only if there is a sequence of Sarkisov $G$-links
\[
S=\FF_0\overset{\chi_0}{\dashrightarrow} S_1\overset{\chi_1}{\dashrightarrow} S_2\overset{\chi_2}{\dashrightarrow}S_3\overset{\chi_3}{\dashrightarrow}\ldots \overset{\chi_{N-1}}{\dashrightarrow} S_N=\FF_1\overset{\chi_N}{\longrightarrow}\PP^2,
\]
where $S_i\simeq\FF_{n_i}$ for all $i$, the links $\chi_i$ for $i=0,\ldots,N-1$ are either $G$-elementary transformations of Hirzebruch surfaces, or links of type $\IV$ (in which case $S_i\simeq S_{i+1}\simeq\PP^1\times\PP^1$), and the last link $\chi_N$ is necessarily of type $\III$, namely the equivariant blow-down of the unique $(-1)$-curve on $\FF_1$ to the $G$-fixed point on $\PP^2$.

The main result of this subsection is the following.

\begin{theorem}\label{thm: P1xP1 Pic=2 linearization}
	The group $G=H_1\times_Q H_2$ is linearizable if and only if one of the following holds:
	\begin{enumerate}
		\item\label{c1} Both $H_1$ and $H_2$ are cyclic;
		\item\label{c2} $H_1$ is cyclic, and $H_2\simeq\Dih_{n}$, where $n\geqslant 1$ is odd, or vice versa;
		\item\label{c3} $H_1\simeq\Dih_{n},H_2\simeq\Dih_{m}$, where $n,m\geqslant 3$ are odd, and $G$ is isomorphic to a dihedral group.
	\end{enumerate}
\end{theorem}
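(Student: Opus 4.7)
The strategy is to analyze the Sarkisov $G$-links available from the $G$-conic bundle $\FF_0 \simeq \PP^1 \times \PP^1$. Since $\FF_0$ carries no $(-1)$-curves, no link of type III is possible; the only links from $\FF_0$ are the $G$-elementary transformations (type II), which land in other Hirzebruch surfaces, and the type IV switches of the two rulings. Consequently, every surface $G$-birational to $\FF_0$ via conic-bundle moves is again of the form $\FF_k$, and linearizability is ultimately controlled by Theorem \ref{thm: hirzebruch answer}.

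For the \textbf{sufficiency} direction, I aim to exhibit in each of the three cases a $G$-orbit $\Sigma$ whose elementary transformation lands in a Hirzebruch surface known to be linearizable by Section \ref{sec: G-Hirzebruch}. In case (\ref{c1}), the cyclicity of both $H_i$ gives a $G$-fixed point in $\Fix(H_1) \times \Fix(H_2)$; the standard Sarkisov link at it (blow-up followed by contraction of the two fibres through it, as in Remark \ref{rem: linearization via stereo}) yields $\PP^2$ with $G$ acting linearly. In case (\ref{c2}), after relabelling we may assume $H_1 \simeq \Cyc_n$ and $H_2 \simeq \Dih_m$ with $m$ odd: for $a \in \Fix(H_1)$ and $b$ on the $m$-orbit of $H_2$ on $\PP^1$, the $G$-orbit $\{a\} \times H_2 b$ has odd length $m$ and lies inside the $G$-invariant section $\{a\} \times \PP^1$ of $\pi_2$, so its $\pi_2$-elementary transformation produces $\FF_m$, which is linearizable by Corollary \ref{cor: Fn odd}. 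In case (\ref{c3}), Lemma \ref{lem: subgroups of Dn x Dm}(2) forces $G \simeq \Dih(A)$ with $A$ cyclic, hence $G$ embeds faithfully in $\PGL_2(\kk)$; combining this embedding with the projections $p_i : G \twoheadrightarrow H_i$, I plan to construct a $G$-invariant section of $\pi_1$ containing a $G$-orbit $\Sigma$ of odd length, whose elementary transformation reaches some $\FF_{k'}$ with $k' \geq 1$. Since $\widehat{G} = H_1$ is an odd dihedral group, Theorem \ref{thm: hirzebruch answer}(3) then produces a linearization.

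For the \textbf{necessity} direction, I plan to show that, for $G$ outside cases (\ref{c1})--(\ref{c3}), no Sarkisov link from $\FF_0$ reaches a Hirzebruch surface on which any of the three hypotheses of Theorem \ref{thm: hirzebruch answer} holds. The two key observations are that an elementary transformation centred at a $G$-orbit of length $\ell$ changes the Hirzebruch index by a quantity of the same parity as $\ell$, and that the induced $G$-action on the base $\PP^1$ (namely $H_1$ or $H_2$) is preserved by elementary transformations and merely interchanged by type IV links. A case analysis based on Goursat's description of $G = H_1 \times_Q H_2$, Proposition \ref{prop: Klein orbits}, and Lemma \ref{lem: orbits of a direct product} will then show that in all the excluded configurations every $G$-orbit in conic-bundle general position has even length and neither $H_i$ is cyclic or odd dihedral, ruling out the applicability of Theorem \ref{thm: hirzebruch answer} at any stage.

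The step I expect to be the most delicate is the construction of the $G$-invariant section and the odd-length orbit on it in case (\ref{c3}). A $G$-equivariant map $\PP^1 \to \PP^1$ intertwining the projections $p_1$ and $p_2$ does not exist naively when the two $H_i$'s are non-isomorphic, and the construction must pass through a more careful Sarkisov analysis---possibly through an auxiliary sextic del Pezzo surface obtained by blowing up a short $G$-orbit, whose $G$-birational classification is tractable by the tools of Section \ref{sec: del Pezzo}.
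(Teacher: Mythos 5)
Your overall framework (reduce everything to elementary transformations between Hirzebruch surfaces and invoke Theorem \ref{thm: hirzebruch answer}) is the right one, and your treatment of cases (\ref{c1}), (\ref{c2}) and of the exclusion of $\Alt_4$, $\Sym_4$, $\Alt_5$ and even dihedral factors coincides with the paper's (fixed point and stereographic projection; odd orbit on an invariant fibre; parity of orbit lengths versus parity of the Hirzebruch index). The first genuine gap is in the sufficiency of case (\ref{c3}), and you have correctly sensed where it is: the $G$-invariant section of $\pi_1$ you hope to find does not exist in general. Such a section is the graph of a morphism $f\colon\PP^1\to\PP^1$ satisfying $f\circ h_1=h_2\circ f$ for all $(h_1,h_2)\in G$; whenever $\ker(G\to H_1)=\{\id\}\times K_2$ is nontrivial (e.g.\ for $G=\Dih_3\times_{\Cyc_2}\Dih_5\simeq\Dih_{15}$, where $K_2\simeq\Cyc_5$), this forces $f$ to take values in $\Fix(K_2)$, hence to be constant, and the two resulting candidates $\PP^1\times\Fix(K_2)$ are swapped by the elements of $G$ projecting to reflections of $H_2$, so neither is $G$-invariant. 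The paper's resolution (Proposition \ref{prop: Euclid}) is of a different nature: one conjugates $G$ inside $\Bir(\PP^1\times\PP^1)$ by the maps $(x,y)\dashmapsto(x,x^{-1}y)$ and $(x,y)\mapsto(y,x)$, running the Euclidean algorithm on the exponents of the characteristic cyclic subgroup until it acts on one factor only; only after this normalization does an invariant fibre carrying an odd orbit appear, after which one lands on an odd $\FF_N$ and concludes by Corollary \ref{cor: Fn odd}.

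The second gap is in the necessity direction, for the configuration where both $H_i$ are odd dihedral but $G=\Dih_n\times_Q\Dih_m$ is not dihedral (e.g.\ $G\simeq\Dih(\Cyc_n\times\Cyc_m)$ with $Q=\Cyc_2$). Your blanket claim that in every excluded configuration ``every $G$-orbit in conic-bundle general position has even length and neither $H_i$ is cyclic or odd dihedral'' is false here: both $H_i$ \emph{are} odd dihedral, so you cannot rule out the hypotheses of Theorem \ref{thm: hirzebruch answer} on a putative intermediate $\FF_k$ with $k\geqslant 1$, and the base action $\widehat{G}\simeq\Dih_n$ has orbits of odd length $n$, so the parity obstruction does not close up by itself. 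The paper disposes of this case by a short argument of a completely different flavour (Lemma \ref{lemma: fibred product of dihedrals}): any linearization must end with the contraction $\FF_1\to\PP^2$ of the $(-1)$-curve onto a $G$-fixed point $p\in\PP^2$, which yields a faithful representation $G\hookrightarrow\GL(T_p\PP^2)\simeq\GL_2(\kk)$, and Lemma \ref{lem: subgroups of Dn x Dm}(2) shows that a non-dihedral $\Dih_n\times_Q\Dih_m$ admits no faithful two-dimensional representation. Without this (or an equivalent) argument the necessity direction is incomplete.
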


We split the proof into several auxiliary statements.

\begin{lemma}\label{lemma: cycliccyclic}
	The group $G=\Cyc_n\times_Q\Cyc_m$ is linearizable for any $n,m\in\ZZ_{>0}$.
\end{lemma}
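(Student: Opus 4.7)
The plan is to exhibit a $G$-fixed point on $S=\PP^1\times\PP^1$ and then linearize by the standard stereographic projection from it---equivalently, a Sarkisov link of type $\II$ to $\FF_1$ followed by a contraction to $\PP^2$.

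First I would normalize the embedding. By Klein's classification (Proposition \ref{prop: Klein}), each finite cyclic subgroup of $\PGL_2(\kk)$ is conjugate to a standard diagonal one. Since $\Aut(S)^\circ\simeq\PGL_2(\kk)\times\PGL_2(\kk)$ acts factorwise on the two rulings, I can conjugate $H_1$ and $H_2$ independently, and so assume that $H_1=\langle\R_n\rangle$ and $H_2=\langle\R_m\rangle$, whence $G\subseteq\langle\R_n\rangle\times\langle\R_m\rangle$ acts diagonally on $S$. By Proposition \ref{prop: Klein orbits}, each $\langle\R_k\rangle$ fixes exactly the points $[1:0]$ and $[0:1]$ of $\PP^1$; hence all four ``corners'' $([\epsilon_1],[\epsilon_2])$, $\epsilon_i\in\{[1:0],[0:1]\}$, are fixed by $H_1\times H_2$, and \emph{a fortiori} by $G$. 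Let $p$ denote one such fixed point.

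Next I would perform the linearization. Both fibres through $p$---namely $\pi_1^{-1}(\pi_1(p))$ and $\pi_2^{-1}(\pi_2(p))$---are $G$-invariant, since $p$ is $G$-fixed and $G$ preserves both rulings. The Sarkisov link of type $\II$ centred at $p$, which blows up $p$ and then contracts the strict transform of $\pi_1^{-1}(\pi_1(p))$, is therefore $G$-equivariant and yields a $G$-action on $\FF_1$. The strict transform of the other fibre $\pi_2^{-1}(\pi_2(p))$ becomes the exceptional $(-1)$-section $\Sigma_1$ of $\FF_1$, still $G$-invariant, and contracting it $G$-equivariantly produces $\PP^2$ with a linear $G$-action.

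The hard part is conceptually empty: the argument rests entirely on the elementary fact that cyclic subgroups of $\PGL_2(\kk)$ always have fixed points on $\PP^1$, so any fibre product of two such groups automatically fixes a corner of $S$; once a fixed point is in hand, the linearizing link is the standard stereographic projection and requires no further input.
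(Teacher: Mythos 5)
Your proof is correct and is essentially the paper's argument: normalize so that $G\subseteq\langle\R_n\rangle\times\langle\R_m\rangle$, observe that $G$ then fixes a corner point such as $([1:0],[1:0])$, and linearize by the stereographic projection from it. Your two-step factorization of the projection through $\FF_1$ is just an expanded description of the same link (cf.\ Remark~\ref{rem: linearization via stereo}).
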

\begin{proof}
	We may assume that $G$ is a subgroup of $\langle \mathrm{R}_n\rangle\times\langle \mathrm{R}_m\rangle$, and hence fixes the point $([1:0],[1:0])$; the stereographic projection from this point linearizes $G$.
\end{proof}

\begin{lemma}\label{lemma: odd dihedral cyclic}
	Let $n$ and $m$ be integers, with $n$ odd. The group $G=\Dih_n\times_Q\Cyc_m$ is linearizable.
\end{lemma}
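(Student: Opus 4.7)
The plan is to construct an explicit sequence of two Sarkisov $G$-links from $\FF_0=\PP^1\times\PP^1$ to a Hirzebruch surface $\FF_k$ with $k$ odd, and then invoke Theorem~\ref{thm: hirzebruch answer}(1). By Proposition~\ref{prop: exact sequence p1xp1}, after conjugation one may assume that $H_1=\Dih_n$ and $H_2=\Cyc_m$ act on the two factors of $S=\PP^1\times\PP^1$ via the matrices from Notations~\ref{notation: matrices}, and that $\pi_1\colon S\to\PP^1$ is a $G$-conic bundle whose base action is $\widehat{G}=H_1=\Dih_n$.

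The first link will be centred at $\Sigma_1=\{([1:0],[1:0]),([0:1],[1:0])\}$. A brief computation with Goursat's Lemma~\ref{lem: Goursat} shows that the stabilizer of $([1:0],[1:0])$ in $G$ equals $G\cap(\langle \R_n\rangle\times\Cyc_m)$, an index-two subgroup, so $\Sigma_1$ is a $G$-orbit of length~$2$ whose two points lie on the distinct fibres $\pi_1^{-1}([1:0])$ and $\pi_1^{-1}([0:1])$; in particular it is in conic-bundle general position. It also lies on the section $L=\PP^1\times\{[1:0]\}$ of $\pi_1$, whose self-intersection is $0$. Blowing up $\Sigma_1$ and contracting the two fibre strict transforms then turns $L$ into a $(-2)$-section, producing a $G$-birational map $\FF_0\dashrightarrow\FF_2$ under which $L$ is identified with the unique $(-2)$-section $\Sigma_2\subset\FF_2$.

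For the second link, the $G$-action on $\Sigma_2$ (inherited from its action on~$L$) factors through $G\twoheadrightarrow H_1=\Dih_n$, since $\Cyc_m$ fixes the second coordinate $[1:0]$. By Proposition~\ref{prop: Klein orbits}, the $\Dih_n$-orbit of the point~$[1:1]\in\Sigma_2\simeq\PP^1$ has length~$n$ (reducing to a single fixed point when $n=1$). Since this orbit lies on the $(-2)$-section, the $G$-elementary transformation centred at it produces $\FF_{2+n}$. As $n$ is odd, $2+n$ is odd, and Theorem~\ref{thm: hirzebruch answer}(1) then yields the desired linearizability. The only step requiring careful verification is the conic-bundle general position of each orbit; this is immediate from the fact that both orbits are contained in a section of the relevant conic bundle, so no serious obstacle is expected.
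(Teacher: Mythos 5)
Your argument is correct and follows essentially the same route as the paper: the $\Cyc_m$-fixed point $[1:0]$ in the second factor gives a $G$-invariant section of $\pi_1$ on which $G$ acts through $\Dih_n$, an odd-length orbit on that section feeds a $G$-elementary transformation landing on an odd Hirzebruch surface, and Corollary \ref{cor: Fn odd} (equivalently Theorem \ref{thm: hirzebruch answer}(1)) finishes. The paper does this in a single link $\FF_0\dashrightarrow\FF_n$ centred at the length-$n$ orbit on that section, so your preliminary length-$2$ link to $\FF_2$ is harmless but redundant.
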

\begin{proof}
	The group $\Cyc_m$ fixes a point on $\PP^1$, and hence $G$ acts on the fibre $F$ over this point. Therefore, the image of $H_1\times_Q H_2$ in $\Aut(F)\simeq\PGL_2(\kk)$ is isomorphic to $\Dih_n$ with $n$ odd. By Proposition \ref{prop: Klein orbits}, there is a $G$-orbit of size $n$ on $F$. A $G$-elementary transformation centred at such orbit leads to the Hirzebruch surface $\FF_n$ and we are done by Corollary~\ref{cor: Fn odd}.
\end{proof}

\begin{lemma}\label{lemma: when P1xP1 not linearizable}
	If $H_1$ or $H_2$ is isomorphic to $\Alt_4,\Sym_4,\Alt_5$, or $\Dih_{2n}$, then $G$ is not linearizable.
\end{lemma}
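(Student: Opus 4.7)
The plan is to show that every $G$-orbit on $S=\PP^1\times\PP^1$ has even length, that this evenness is preserved along any chain of $G$-Sarkisov links, and to conclude that the chain can never reach $\FF_1$, the unique source of a type III link to $\PP^2$.

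Without loss of generality, assume $H_2\in\{\Alt_4,\Sym_4,\Alt_5,\Dih_{2n}\}$. By Proposition~\ref{prop: Klein orbits}, every $H_2$-orbit on $\PP^1$ has even length. Since $\pi_2\colon S\to\PP^1$ is $G$-equivariant, $|H_2\cdot\pi_2(p)|$ divides $|G\cdot p|$ for every $p\in S$, so every $G$-orbit on $S$ has even length.

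I would then check, by induction along the Sarkisov chain, that the same evenness holds on every $G$-Hirzebruch surface reached from $S$. For the inductive step, consider a $G$-elementary transformation $\chi\colon\FF_n\dashrightarrow\FF_m$ centred at a $G$-orbit $\Sigma=\{p_1,\ldots,p_\ell\}$ of length $\ell$ (even by the inductive hypothesis): the blow-up $\varphi\colon Y\to\FF_n$ followed by the contraction $\psi\colon Y\to\FF_m$ of the strict transforms $\tilde F_1,\ldots,\tilde F_\ell$ of the fibres through $\Sigma$. Orbits of $G$ on $Y$ disjoint from the exceptional locus come from orbits on $\FF_n\setminus\Sigma$ and thus have even length. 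All remaining orbits are contained in $\bigsqcup_j E_j$ or $\bigsqcup_j\tilde F_j$, whose $\ell$ components are permuted transitively by $G$, so their lengths are divisible by $\ell$ and hence even. After $\psi$ the contracted fibres form a single $G$-orbit $\{q_1,\ldots,q_\ell\}$ of length $\ell$, while all other orbits descend from $Y$ with their lengths unchanged. So every $G$-orbit on $\FF_m$ is even.

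To conclude, each type II link changes the Hirzebruch index by $\pm\ell$ with $\ell$ even; type IV links preserve $\FF_0$; and the only type III link out of a Hirzebruch surface is $\FF_1\to\PP^2$. Hence the parity of the Hirzebruch index stays equal to $0$ throughout the chain, $\FF_1$ is never reached, and $G$ is not linearizable. The main technical point is the inductive step, which requires verifying that no $G$-orbit of odd length is created on the exceptional divisors or on the contracted fibres; this is handled via the $G$-transitive action on the $\ell$ components $E_j$ and $\tilde F_j$.
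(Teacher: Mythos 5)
Your overall strategy is the same as the paper's: show that every $G$-orbit has even length, propagate this along the chain of links, and deduce that the parity of the Hirzebruch index is locked at $0$ so that $\FF_1$ is unreachable. Your first two steps are sound, and your inductive bookkeeping of orbit parities on the exceptional divisors and contracted fibres is actually carried out more explicitly than in the paper, which disposes of it in one sentence. The gap is in your final step, which is where the real work lies: the assertion that a type $\II$ link centred at an orbit of length $\ell$ ``changes the Hirzebruch index by $\pm\ell$'' is stated without proof and is false as written. An orbit in conic bundle general position need not lie on $\Sigma_n$ or on a section disjoint from it, and in general a $G$-elementary transformation $\FF_n\dashrightarrow\FF_m$ centred at $\ell$ points, $t$ of which lie on a given section $C$, changes $C^2$ by $\ell-2t$; for instance, blowing up two points of $\FF_0$ lying in distinct fibres of both rulings and contracting returns $\FF_0$, so the index changes by $0$, not by $\pm 2$. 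What is actually true and needed is only the parity statement $m\equiv n+\ell\pmod 2$, but this requires an argument: the paper proves it by taking the strict transform of the $(-m)$-section $\Sigma_m\subset\FF_m$ back to a curve $C\subset\FF_{2n}$ and computing $C^2=\ell-2t-m$, which must be even because every class on a Hirzebruch surface of even index has even self-intersection; hence $m$ is even whenever $\ell$ is. Without some such intersection-theoretic input, your argument does not rule out elementary transformations centred at orbits that do not lie on any section, and the claim that the chain never reaches an odd-index Hirzebruch surface is not established.
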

\begin{proof}
	We start with the following observation. Consider a $G$-elementary transformation
	\[
	\xymatrix{
		&T\ar@{->}[dl]_{\eta}\ar@{->}[dr]^{\eta'}&\\
		\FF_{2n}\ar@{-->}[rr]^{\chi}\ar[dr]&& \FF_m\ar[dl]\\
		& {\PP^1} &}
	\]
	centred at a $G$-orbit $p_1,\ldots,p_{2k}\in\FF_{2n}$ of length $2k\geqslant 2$. Let us show that $m$ is even. Indeed, assume that $m$ is odd. For $i=1,\ldots, 2k$, let $F_i\subset\FF_{2n}$ be the fibres through the blown-up points~$p_i$, let $E_i=\eta^{-1}(p_i)\subset T$ be the exceptional divisors over these points, $\widetilde{F}_i\subset T$ be strict transforms of $F_i$, and $q_i=\eta'(F_i)$ be the resulting $G$-orbit on $\FF_m$. Suppose that the curve $\Sigma_m\subset\FF_m$ contains $t\geqslant 0$ points among $q_i$. Its strict transform $\widetilde{\Sigma}_m\subset T$ under $\eta'$ then satisfies $\widetilde{\Sigma}_m^2=-m-t$, it intersects exactly $t$ curves $\widetilde{F}_i$ on $T$, and $2k-t$ curves $E_i$ on $T$. The $\eta$-images of these $2k-t$ curves are the blown-up points lying on $C=\eta(\widetilde{\Sigma}_m)$. Thus, $C^2-(2k-t)=\widetilde{\Sigma}_m^2=-m-t$, hence $C^2=2k-2t-m$ is odd, which is impossible\footnote{Recall that $\Pic(\FF_N)$ is generated by the class of a fibre $F$ and the $(-N)$-section $\Sigma_N$ (or a transversal fibre $\Sigma_0$, such that $\Sigma_0\cdot F=1$, when $N=0$). One has $F^2=0$, $F\cdot\Sigma_N=1$, $\Sigma_N^2=-N$. So, for a curve $C\sim aF+b\Sigma_N$ one has $C^2=2ab-Nb$.} on $\FF_{2n}$.
		
	Now let $H_1$ and $H_2$ be as in the condition of the Lemma. It is sufficient to show that there is no sequence of $G$-elementary transformations (possibly alternating with links of type IV on $\FF_0$) from $S=\FF_0$ to $\FF_1$. By Proposition \ref{prop: Klein orbits} and Lemma~\ref{lem: orbits of a direct product}, the group $G=H_1\times_Q H_2$ has only orbits of even length on $S$. Thus, the first $G$-elementary transformation brings us to some $\FF_{2n}$. Since the lengths of the orbits is preserved under $G$-fibrewise transformations, a sequence of $G$-elementary transformations can only lead to Hirzebruch surfaces $\FF_N$ with $N$ even.
\end{proof}

It remains to study the linearizability of $G$ when $H_1\simeq\Dih_n$ and $H_2\simeq\Dih_m$, with $n$ and $m$ odd. By using Notations \ref{notation: matrices}, we can suppose that $\Dih_n=\langle\Rot_n,\B\rangle$ and $\Dih_m=\langle\Rot_m,\B\rangle$ as subgroups of $\PGL_2(\kk)$. Recall that possible fibre products $\Dih_n\times_Q\Dih_m$ were essentially described in Lemma \ref{lem: subgroups of Dn x Dm}.

\begin{lemma}\label{lemma: fibred product of dihedrals}
	Let $n,m$ be odd positive integers. If $G=\Dih_n\times_Q\Dih_m$ is not isomorphic to a dihedral group, then $G$ is not linearizable.
\end{lemma}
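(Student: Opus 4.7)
The plan is to show that $(\FF_0, G)$ admits no Sarkisov $G$-link to any $G$-Mori fibre space other than $\FF_0$ itself, which rules out $G$-birationality with $\PP^2$. By Lemma~\ref{lem: subgroups of Dn x Dm}, ``not dihedral'' splits into two subcases: either $G = \Dih_n \times \Dih_m$, or $G = \Dih(A)$ with $A = \Cyc_a \times \Cyc_b$ non-cyclic (forcing $a,b \geq 3$ odd with $\gcd(a,b)>1$). From the $G$-conic bundle structure on $\FF_0$, the only Sarkisov $G$-links available are of type~II (elementary transformations at conic-bundle-general-position $G$-orbits) and of type~IV (swap of the two rulings, which manifestly preserves $\FF_0$), since $\FF_0$ has no $(-1)$-curves to contract to a $G$-del Pezzo. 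The task reduces to showing every type~II link again terminates on $\FF_0$.

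The core is a cbgp-orbit analysis via the kernel $K = \ker(G \to \widehat G)$, where $\widehat G \subseteq \PGL_2(\kk)$ is the image of $G$ on the base of $\pi_1$. A $G$-orbit on $\FF_0$ is cbgp precisely when $K$ fixes each of its points, that is, when the orbit lies inside the $K$-fixed locus of every fibre. In the first subcase $K = \Dih_m$, which has no fixed point on $\PP^1$ (Proposition~\ref{prop: Klein orbits}), so no cbgp orbits exist at all and $(\FF_0, G)$ admits no nontrivial type~II link. In the second subcase $K = \Cyc_b$ has exactly two fixed points per fibre, defining two disjoint $K$-fixed sections $S_1, S_2 \subset \FF_0$. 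Crucially, the element $\tau \in G$ generating $G/A \simeq \Cyc_2$ acts on each fibre as an involution in $\Dih_b$ inverting the rotation subgroup $\Cyc_b$, which therefore interchanges the two $\Cyc_b$-fixed points in the fibre, so $\tau$ swaps $S_1 \leftrightarrow S_2$.

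Consequently, every cbgp $G$-orbit $\Sigma \subseteq S_1 \sqcup S_2$ splits into equal halves on the two sections by the $\tau$-symmetry, so $|\Sigma|$ is even. Applying Proposition~\ref{prop: Klein orbits} to $\widehat G \simeq \Dih_a$, the only possible base-orbit lengths are $2$, $a$, or $2a$; the length-$a$ case is excluded by a direct stabilizer computation (any lift in $G$ of the $\widehat G$-involution fixing a point of $\mu_a$ has the form $(2k,s)\tau$ and sends a $K$-fixed point $(x_0,y_0)$ with $y_0 \in \{0,\infty\}$ to $(x_0, \omega_b^s/y_0)$, which cannot equal $(x_0,y_0)$ since $y_0^2 \notin \mu_b$). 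Hence cbgp lengths are $2$ or $2a$, both even. Performing the Sarkisov link at such a $\tau$-symmetric orbit $\Sigma$ as a sequence of $|\Sigma|$ single-point elementary transformations, each move sends $\FF_n \to \FF_{n\pm 1}$ depending on whether the blown-up point lies on the current minimal section; the equal-halves property forces the up and down increments to cancel exactly, so the output is $\FF_0$.

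Finally, since the link is $G$-equivariant it transports $S_1, S_2$ to the two $K$-fixed sections of the new $\FF_0$, and $\tau$ still swaps them, so the structural hypothesis recurs and an induction establishes that no Sarkisov link from $(\FF_0, G)$ ever escapes $\FF_0$; type~IV links merely interchange $\pi_1$ and $\pi_2$ (with $K = \Cyc_a$ for the second ruling), and the same argument applies symmetrically. This proves $(\FF_0, G)$ is $G$-birationally rigid and hence $G$ is not linearizable. The main obstacle is the detailed Picard-theoretic bookkeeping for step three: rigorously verifying that the simultaneous elementary transformation at an even $\tau$-symmetric orbit outputs $\FF_0$ rather than some $\FF_{2k}$ with $k>0$, which reduces to tracking the strict transforms of $S_1, S_2$ through the link and confirming that no $(-2k)$-section emerges in the output surface.
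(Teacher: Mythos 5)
Your proposal is correct in substance, but it takes a genuinely different — and much heavier — route than the paper. The paper's proof is three lines: any linearizing chain of Sarkisov links must terminate with the type $\III$ contraction $\FF_1\to\PP^2$ of the unique $(-1)$-curve, whose image is a $G$-fixed point $p\in\PP^2$; the induced representation $G\hookrightarrow\GL(T_p\PP^2)\simeq\GL_2(\kk)$ is faithful, and Lemma \ref{lem: subgroups of Dn x Dm} says a non-dihedral $\Dih_n\times_Q\Dih_m$ (having trivial centre, hence embedding into $\PGL_2(\kk)$, hence constrained by Klein's list) admits no such representation. You instead prove the stronger statement that $(\FF_0,G)$ is $G$-birationally (super)rigid by showing every available link (type $\II$ or $\IV$) returns to $\FF_0$: in the case $G=\Dih_n\times\Dih_m$ the fibrewise kernel is a dihedral group without fixed points on the fibre, so no orbit is in conic-bundle general position; in the case $G=\Dih(A)$ the kernel $K=\Cyc_b$ cuts out two sections swapped by every element of $G\setminus A$, so every admissible orbit splits evenly between them. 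Your flagged ``main obstacle'' does close: after blowing up $k$ points on each section and contracting the $2k$ fibres, the strict transforms of $S_1,S_2$ are two disjoint sections of self-intersection $0-k+k=0$, and a Hirzebruch surface carrying a section of square zero is $\FF_0$ (so no ordering or ``current minimal section'' bookkeeping is needed — your phrasing of that step is the only imprecise point). The remaining soft spot is the recursion through type $\IV$ links: you should verify that the kernel for the \emph{second} ruling of the new $\FF_0$ is again of one of the two shapes you treat; this follows because that kernel embeds into $\PGL_2(\kk)$ and is normal in $G$ with quotient in Klein's list, which for a non-dihedral $\Dih_n\times_Q\Dih_m$ forces it to be either the full second dihedral factor or a nontrivial odd cyclic group. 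In exchange for the extra work, your argument yields rigidity of $(\FF_0,G)$ rather than merely non-linearizability, and it is independent of the representation-theoretic Lemma \ref{lem: subgroups of Dn x Dm}(2); the paper's argument buys brevity and reuses the tangent-space obstruction already deployed for the quintic del Pezzo case.
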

\begin{proof}
If $G$ is linearizable, then there is a sequence of Sarkisov $G$-links $S_1=\PP^1\times\PP^1\dashrightarrow S_2\dashrightarrow\ldots\dashrightarrow S_k=\FF_1\to\PP^2$, where the last map is the $G$-equivariant blow-down of the unique $(-1)$-curve on $\FF_1$ --- a link of type $\III$. The image of this curve is a $G$-fixed point $p\in\PP^2$, and hence $G$ admits a faithful representation in $\GL(T_p\PP^2)\simeq\GL_2(\kk)$. We conclude by Lemma~ \ref{lem: subgroups of Dn x Dm}.
\end{proof}

\begin{proposition}[Linearization via the Euclidean algorithm]\label{prop: Euclid}
	Let $n$ and $m$ be odd positive integers. Suppose that the group $G=\Dih_n\times_Q\Dih_m$ is dihedral. Then $G$ is linearizable.
\end{proposition}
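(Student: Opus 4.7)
The plan is to exhibit an explicit $G$-orbit of length $2$ on $S=\PP^1\times\PP^1$ that lies in conic bundle general position, perform the corresponding $G$-elementary transformation to reach the Hirzebruch surface $\FF_2$, and then invoke Corollary~\ref{cor: Fn cyclic or dihedral} to descend $G$-equivariantly to $\FF_1$, which further contracts to $\PP^2$. The edge case $n=1$ or $m=1$ reduces immediately to Lemma~\ref{lemma: odd dihedral cyclic}, since then one of $H_1,H_2$ is already cyclic; so I will assume $n,m\geqslant 3$ and, as noted in Lemma~\ref{lem: subgroups of Dn x Dm}, that $G$ arises in case (ii) or (iii), i.e.\ with $Q\in\{\Cyc_2,\Dih_q\}$ (case (i) is excluded because $\Dih_n\times\Dih_m$ is never dihedral for $n,m\geqslant 3$).

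The first step is a structural description of $G$. Since $G$ is non-abelian dihedral, it contains a unique index-$2$ cyclic subgroup $A$, and every element $\tau\in G\setminus A$ inverts $A$ by conjugation. From the Goursat description of the fibre product, $\tau$ maps to a reflection in $Q$, and consequently is of the form $(t_1,t_2)$ with $t_1\in\Dih_n$ and $t_2\in\Dih_m$ both reflections. I would then verify that $A\subset\Cyc_n\times\Cyc_m$: for any $a=(a_1,a_2)\in A$ the relation $t_ia_it_i^{-1}=a_i^{-1}$ forces $a_i$ to be a rotation or to equal $t_i$, since for odd $n$ the centralizer of a reflection in $\Dih_n$ equals $\{1,t_i\}$; but the fibre product then forbids $(a_1,a_2)=(t_1,t_2)=\tau\notin A$. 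I expect this short structural argument to be the only non-geometric obstacle.

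With this in hand, the four points of $\{[1{:}0],[0{:}1]\}\times\{[1{:}0],[0{:}1]\}\subset S$ are all fixed by $A$, since $A$ acts through $\Cyc_n\times\Cyc_m$. By Notation~\ref{notation: matrices}, every reflection in $\Dih_n$ (resp.\ $\Dih_m$) has the form $\R_n^k\B$ (resp.\ $\R_m^k\B$) and swaps $[1{:}0]\leftrightarrow [0{:}1]$; hence $\tau$ sends $p_1:=([1{:}0],[1{:}0])$ to $p_2:=([0{:}1],[0{:}1])$. So $\{p_1,p_2\}$ is a single $G$-orbit of length $2$ lying in conic bundle general position for both projections $\pi_1,\pi_2$. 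The $G$-elementary transformation centered at this orbit (contracting the strict transforms of the $\pi_1$-fibres through $p_1,p_2$) then produces a $G$-conic bundle $\FF_2\to\PP^1$ with base action $\widehat{G}=H_1\simeq\Dih_n$. Since $n$ is odd, Corollary~\ref{cor: Fn cyclic or dihedral} yields a $G$-birational map $\FF_2\dashrightarrow\FF_1$, and the unique $(-1)$-section of $\FF_1$ is $G$-invariant and contracts $G$-equivariantly to $\PP^2$, linearizing $G$.
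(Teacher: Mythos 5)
Your structural analysis of $G$ is sound (the index-two cyclic subgroup $A$ lies in $\Cyc_n\times\Cyc_m$, and any $\tau\in G\setminus A$ is a pair of reflections, hence anti-diagonal in both coordinates), and the reduction of the case $n=1$ or $m=1$ to Lemma~\ref{lemma: odd dihedral cyclic} is fine. The proof fails, however, at the key geometric step: the $G$-elementary transformation centred at $\{p_1,p_2\}=\{([1{:}0],[1{:}0]),([0{:}1],[0{:}1])\}$ does \emph{not} produce $\FF_2$ --- it produces $\FF_0$ again. To land on $\FF_2$ the two blown-up points would have to lie on a common section of $\pi_1$ of self-intersection $0$, i.e.\ on a common $\pi_2$-fibre $\PP^1\times\{y\}$; your two points have distinct second coordinates, so they do not. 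A direct computation confirms this: a section of $\pi_1$ has class $ah_1+h_2$ with self-intersection $2a$, and after blowing up $p_1,p_2$ and contracting the two fibre transforms its self-intersection becomes $2a-m_1-m_2+(1-m_1)^2+(1-m_2)^2$ with $m_i\in\{0,1\}$; the minimum over all sections is $0$ (attained e.g.\ by the horizontal lines through exactly one of the $p_i$, or by the diagonal, which passes through both and has $a=1$). Hence the target surface is $\FF_0$, and Corollary~\ref{cor: Fn cyclic or dihedral} cannot be invoked. Indeed, your elementary transformation is exactly the birational self-map $(x,y)\dashmapsto(x,x^{-1}y)$ of $\PP^1\times\PP^1$. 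No other choice of $2$-point orbit helps: a $G$-orbit of length $2$ contained in a single horizontal line would force $H_2$ to fix a point of $\PP^1$, which is impossible for $H_2\simeq\Dih_m$ with $m\geqslant 3$; and by the parity argument used in Lemma~\ref{lemma: when P1xP1 not linearizable}, transformations centred at even-length orbits can never leave the even Hirzebruch surfaces, so one ultimately needs an orbit of odd length in conic bundle general position.

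This is precisely why the paper's argument is more involved. It iterates the map $(x,y)\dashmapsto(x,x^{-1}y)$ (together with the swap of factors), each iteration returning to $\PP^1\times\PP^1$ but modifying the exponents $(a,b)$ of the generator $(\Rot_M^a,\Rot_M^b)$ of the cyclic part as in the Euclidean algorithm; here the hypothesis that $G$ is dihedral is what guarantees the cyclic part is generated by a single such element. Only once the generator has been brought to the form $(\Rot_M^{\ell},\id)$ does a $G$-invariant horizontal line appear (the fibre $y=1$), carrying a $G$-orbit of odd length $N=|G|/2$ in conic bundle general position; the elementary transformation at that orbit reaches $\FF_N$ with $N$ odd, and only then does the descent to $\FF_1$ and $\PP^2$ go through. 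Your single transformation at a length-$2$ orbit skips exactly the part of the argument that does the work.
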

\begin{proof}
	For convenience, we set $\Dih_1=\Cyc_2$, so that $G=\Dih_n\times_{\Dih_d}\Dih_m$, where $d\geqslant 1$ divides both $n$ and $m$. By Lemma \ref{lem: Goursat exact sequence}, there is a short exact sequence of groups
	\begin{equation}
		\begin{tikzcd}
			1
			\ar{r}
			& 
			\langle(\Rot_n^d,\id) \rangle\times\langle(\id,\Rot_m^d)\rangle
			\ar{r}
			& 
			G
			\ar{r}{\varrho}
			& 
			\Dih_d
			\ar{r}
			& 
			1.
		\end{tikzcd}
	\end{equation}
	Since $G$ is dihedral, then $\ker\varrho$ must be cyclic, so $\gcd(n/d,m/d)=1$ and $\ker\varrho$ is generated by the element $(\Rot_n^d,\Rot_m^d)$. By Remark \ref{Generators of a fibre product}, the group $G$ is generated by $(\Rot_n^d,\Rot_m^d)$ and $(\B,\Rot_m^v\B)$ when $d=1$, and by $(\Rot_n^d,\Rot_m^d)$, $(\Rot_n,\Rot_m^u)$ and $(\B,\Rot_m^v\B)$ when\footnote{Note that the second case includes the sub-case $n=m=d$. As follows from Goursat's lemma, we have $G\simeq\{(g,\varphi(g))\colon g\in\Dih_n\}$, where $\varphi\in\Aut(\Dih_n)$. It is well known that in the standard presentation of $\Dih_n$, its automorphism group is generated by the maps $r\mapsto r^j$, $s\mapsto r^ts$, which agrees with the generators we chose.} $d>1$; here, $u,v$ are some positive integers. Let $k=v/2$ if $v$ is even and $k=(v+m)/2$ if $v$ is odd. By conjugating $G$ in $\Aut(S)$ by the automorphism $(\id,\Rot_m^k)$, we may assume $v=0$. 
	
	Let $M=nm$. Then
	\[
	(\Rot_n^d,\Rot_m^d)=(\Rot_M^{dm},\Rot_M^{dn}),\ \ (\Rot_n,\Rot_m^u)=(\Rot_M^m,\Rot_M^{nu}).
	\]
	Regardless of whether $d=1$ or $d>1$, the elements of $G$ are all of the form $(\Rot_M^a,\Rot_M^b)$ or $(\B\Rot_M^a,\B\Rot_M^b)$ for some positive integers $a,b$. Since the latter ones are involutions, we can assume that the characteristic cyclic subgroup of $G$ is generated by an element of the form $(\Rot_M^a,\Rot_M^b)$, i.e. by the map $(x,y)\mapsto (\omega_M^ax,\omega_M^by)$.
	
	Consider the following birational self-map of $S$ and its inverse:
	\[
	\varphi\colon (x,y)\dashmapsto (x,x^{-1}y),\ \ \varphi^{-1}\colon (x,y)\dashmapsto (x,xy).
	\]
	Given a biregular automorphism $g\colon (x,y)\mapsto (\alpha x,\beta y)$ of $S$ for some $\alpha,\beta\in\kk^*$, we have the following commutative diagram
	\[
	\xymatrix{
		S\ar@{->}[rr]^{g}\ar@{-->}[d]_{\varphi} && S\ar@{-->}[d]^{\varphi}\\
		S\ar@{->}[rr]^{\overline{g}} && S,		
	}
	\]
	where $\overline{g}\colon (x,y)\mapsto (\alpha x,\alpha^{-1}\beta y)$ is $\varphi\circ g\circ\varphi^{-1}$. The conjugation of the automorphism $(\B,\B)$ by $\varphi$ gives the same action. To sum up, $\varphi$ is a birational equivalence between $(\PP^1\times\PP^1,G)$ and $(\PP^1\times\PP^1,\overline{G})$, where $\overline{G}$ is generated by  
	\[
	(x,y)\mapsto (\omega_M^ax,\omega_M^{b-a}y),\ \
	(x,y)\mapsto (x^{-1},y^{-1}).
	\]
	The biregular automorphism $\sigma\colon (x,y)\mapsto (y,x)$ conjugates the map $(x,y)\mapsto (\alpha x,\beta y)$ to $(x,y)\mapsto (\beta x,\alpha y)$ and does not change $(\B,\B)$. Therefore, up to conjugation by $\sigma$, we may assume $0\leqslant a\leqslant b$. We now run the Euclidean algorithm for $a$ and $b$: first, by iterating the conjugation by $\varphi$, we can replace $(\Rot_M^a,\Rot_M^b)$ by $(\Rot_M^a,\Rot_M^r)$, where $r$ is the remainder of division of $b$ by $a$. By using $\sigma$, we replace $(\Rot_M^a,\Rot_M^r)$ by $(\Rot_M^r,\Rot_M^a)$, perform Euclidean division of $a$ by $r$, and so on. The algorithm results in the generator $(\Rot_M^\ell,\id)$, where $\ell=\gcd(a,b)$. 
	
	Hence, we birationally conjugated $G$ to the dihedral group generated by $(\Rot_M^\ell,\id)$ and $(\B,\B)$. The fibre over $y=1$ is now faithfully acted on by $G$. Making an elementary transformation at the orbit of size $N=|G|/2$ in this fibre, we arrive to $\FF_N$ and hence can further linearize the action of $G$.
\end{proof}

\begin{proof}[Proof of Theorem \ref{thm: P1xP1 Pic=2 linearization}]
	Follows from Lemmas \ref{lemma: cycliccyclic}--\ref{lemma: fibred product of dihedrals} and Proposition \ref{prop: Euclid}.
\end{proof}

\section*{Appendix: Magma code}

The following Magma code determines the GAP ID of finite subgroups of $\Aut(\PP^1\times\PP^1)\simeq\PO(4)$ defined by their matrix generators. The code is also available on the GitHub page of the first author \cite{PinardinGitHub}.

\lstinputlisting[language=Magma]{magma.txt}
\bigskip 
\bigskip

\def\bibindent{2.5em}

\bibliographystyle{myalpha}
\bibliography{biblio}

\end{document}